\numberwithin{equation}{section}
\newenvironment{psm}
  {\left(\begin{smallmatrix}}
  {\end{smallmatrix}\right)}
\numberwithin{equation}{section}
\theoremstyle{plain}
\newtheorem{thm}{Theorem}[section]
\newtheorem{lem}[thm]{Lemma}
\newtheorem{prop}[thm]{Proposition}
\newtheorem{cor}[thm]{Corollary}
\newcommand{\thmref}[1]{Theorem~\ref{#1}}
\newcommand{\lemref}[1]{Lemma~\ref{#1}}
\newcommand{\propref}[1]{Proposition~\ref{#1}}
\newcommand{\corref}[1]{Corollary~\ref{#1}}
\theoremstyle{definition}
\newtheorem{rmk}[thm]{Remark}
\newtheorem{defi}[thm]{Definition}
\newcommand{\rmkref}[1]{Remark~\ref{#1}}
\newcommand{\defiref}[1]{Definition~\ref{#1}}
\newcommand{\re}{\mrm{Re}}
\newcommand{\im}{\mrm{Im}}
\newcommand{\mbb}{\mathbb}
\newcommand{\x}{\textbf}
\newcommand{\mf}{\mathbf}
\newcommand{\q}{\quad}
\newcommand{\mc}{\mathcal}
\newcommand{\mrm}{\mathrm}
\newcommand{\sltwo}{\mrm{SL}(2, \mf Z)}
\newcommand{\symtwor}{\mrm{Sym}_2(\mf R)}
\newcommand{\sptwo}{\mrm{Sp}(2, \mf Z)}
\newcommand{\sptwor}{\mrm{Sp}(2, \mf R)}
\newcommand{\sltwor}{\mrm{SL}(2, \mf R)}
\newcommand{\spnr}{\mrm{Sp}(n, \mf R)}
\newcommand{\gltwo}{\mrm{GL}(2, \mf Z)}
\newcommand{\bphi}{\bm{\phi}}
\newcommand{\ut}{\underset}
\newcommand{\symnr}{\mrm{Sym}_n(\mf R)}
\newcommand{\di}{\frac{\partial}{\partial z_{2}}} 
\newcommand{\dib}{\frac{\partial}{\partial \bar{z_{2}}}} 
\newcommand{\tr}{\mathrm{tr}}
\begin{document}

\title[Growth of Petersson norms]{Petersson norms of not necessarily cuspidal Jacobi modular forms and applications}

\author{Siegfried B\"ocherer}
\address{Institut f\"ur Mathematik\\
Universit\"at Mannheim\\
68131 Mannheim (Germany).}
\email{boecherer@math.uni-mannheim.de}

\author{Soumya Das}
\address{Department of Mathematics\\ 
Indian Institute of Science\\ 
Bangalore -- 560012, India.}
\email{somu@math.iisc.ernet.in, soumya.u2k@gmail.com}

\date{}
\subjclass[2000]{Primary 11F50; Secondary 11F46} 
\keywords{Petersson norm, Invariant differential operators, representation numbers}

\dedicatory{In the memory of Prof. H. Klingen (1927--2017)}

\begin{abstract}
We extend the usual notion of Petersson inner product on the space of cuspidal Jacobi forms to include non-cuspidal forms as well. This is done by examining carefully the relation between certain ``growth-killing" invariant differential operators on $\mf H_2$ and those on $\mf{H}_1 \times \mf{C}$ (here $\mf H_n$ denotes the Siegel upper half space of degree $n$). As applications, we can understand better the growth of Petersson norms of Fourier Jacobi coefficients of Klingen Eisenstein series, which in turn has applications to finer issues about representation numbers of quadratic forms; and as a by-product we also show that \textit{any} Siegel modular form of degree $2$ is determined by its `fundamental' Fourier coefficients.
\end{abstract}
\maketitle 

\section{Introduction}
Non-cuspidal elliptic modular forms decompose into an Eisenstein series
and a cusp form. This gives at the same time a decomposition of
its Fourier coefficients into a dominant term 
(coming from the Eisenstein series)
and an error term (coming from the cuspidal part).
For Siegel modular forms of higher degrees the main obstacle to clean asymptotic properties
of the Fourier coefficients are the so-called Klingen-Eisenstein
series attached to cusp forms of lower degree. Their Fourier coefficients
grow in a similar way as the Fourier coefficients of Siegel Eisenstein series (which are natural candidates for the dominant term), 
as long as we consider them indexed by matrices in certain 
subsets of half-integral symmetric and positive definite matrices. We refer the reader to \cite{BR, KiTata} for a variety of results in this direction. 

A first attempt to a better understanding of this phenomenon was made
in \cite{BV} for the special case of degree $2$. The key observation
was to use the Fourier-Jacobi coefficients $\phi_m$ of the Klingen 
Eisenstein series, and to consider its decomposition into a Jacobi-Eisenstein part and a cuspidal
part:
\[ \phi_m={\mathcal E}_{k,m}+\phi_m^o.\]
Vaguely speaking, the cuspidal part $\phi_m^\circ$ behaves (almost) 
like the Fourier-Jacobi coefficient of a Siegel cusp form, whereas the
${\mathcal E}_{k,m}$'s are responsible for the dominating part of the Fourier
coefficients of the Klingen Eisenstein series.
These properties were shown in \cite{BV} for special types of exponent matrices $T$, e.g., those $T$ for which $-\det(2T)$ is a fundamental discriminant. 
The basic tool in \cite{BV} was to identify the Fourier coefficients of the ${\mathcal E}_{k,m}$ with subseries of the infinite series giving the Fourier coefficients of the Klingen-Eisenstein series. This interplay was recently worked out in complete generality in the Ph.D. thesis of T. Paul \cite{P, PEng}.

One of the main purposes of the present paper is to get a 
better understanding of the growth 
properties of those cuspidal parts $\phi_m^o$, in particular
their Petersson norms; from this one then gets
growth properties also for the Fourier coefficients. This is obtained in the form of an asymptotic formula, in \thmref{genasy}. To do this we take a detour (mainly because the Dirichlet series $\sum_{m \geq 1}\langle \phi_m^\circ,\phi_m^\circ \rangle m^{-s}$ does not seem to have good analytic properties, see \rmkref{detour}), which is at the same time the 
second main topic of our paper. Namely using certain
``growth killing" invariant differential operators, 
we define an extension $\{ \, ,\}$ of the classical Petersson product $\langle \, , \rangle$ to the full space
of Jacobi forms. The idea is that we are able to estimate $\{\phi_m,\phi_m\}$ 
first and compute $\{{\mathcal E}_{k,m},{\mathcal E}_{k,m}\}$ 
explicitly; thereby obtaining a bound for $\{\phi_m^\circ,\phi_m^\circ \}$.
Note that this is proportional 
to the square of the standard Petersson norm for $\phi_m^\circ$, because
$\phi_m^\circ$ is cuspidal. 

Before discussing the content of the paper in more detail, let us indicate a few applications of our construction and ideas involved in this paper. \textsl{\textbf{Throughout this paper, we assume that $k$ is even.}} One of the reasons is that there is no reasonable way of introducing Eisenstein series for odd weights for the Siegel modular group (cf. \cite[p.~63]{Fr}).

\subsection*{Applications:}  Apart from the intrinsic interest in
the extended Petersson norm, we can give the following applications.

\noindent \textsl{(i)} Firstly, we can prove a version of an asymptotic formula for the representation number of a given binary even integral quadratic form by an even unimodular form, which is more refined than what was known before in previous works, eg., \cite{BV}, \cite[\S~4]{Ki1}. In particular, we prove the following. Let $T$ be a positive definite binary quadratic form and $\min(T)$ be its minimum. Suppose that $\min(T)$ is represented by a positive definite even unimodular quadratic form $S$ in $2k$ variables ($k \geq 4$). Then for any $\epsilon>0$, the number of representations $A(S,T)$ of $T$ by $S$ is at least $C \cdot \det(T)^{k-3/2-\epsilon}$, where $C$ is a constant depending only on $k,S$. See \corref{repno-nonzero}. We refer the reader in particular to \cite[Rmk.~3, Thm.~IV]{BR} and the work of Kitaoka \cite{KiTata} for more details. The main point is that our results are uniform in $\det(T)$ and do not depend on conditions like $\min(T) \to \infty$, which seem to be present in all the earlier works.

\noindent \textsl{(ii)}  Secondly, we answer a question raised in the paper by A. Saha \cite{saha} affirmatively by showing that if $F$ is a non-zero Siegel modular form of degree $2$, then it has infinitely many non-zero `fundamental' Fourier coefficients, i.e., $a(F,T) \neq 0$ with $-\det(2T)$ a fundamental discriminant. This follows from the finer asymptotics in \thmref{genasy} along with a certain `$\Omega$'-result for Fourier coefficients of elliptic cusp forms. Previously this was only known from \cite{saha} in the case of cusp forms. See \propref{saha-all}.

\noindent \textsl{(iii)}  As a final application of the techniques developed here we generalize
the Dirichlet series introduced by Kohnen-Skoruppa in \cite{KS}
to not necessarily cuspidal Siegel modular forms by considering 
$\sum_{m \geq 1} \{\phi_m,\phi_m\} m^{-s}$; and show that this series has essentially the same properties as in the cuspidal case. Note however that unlike the case in \cite{KS}, we cannot use Landau's theorem to study the growth of $\{ \, , \}$, as this inner product may not be positive definite.

\subsection*{Discussion of the main topics and the structure of the paper:}  Invariant differential operators are quite 
complicated objects for Jacobi groups, e.g.
the ring of such operators is not commutative, 
see \cite{Ochiai} for 
the general picture and \cite{BS, Pitale} for details in the case of the classical Jacobi group.
%details. 
For our purpose, the
situation is even more complicated because strictly speaking, 
we are not so much interested
in the intrinsic theory of such operators but in their relation  
to growth killing differential operators for Siegel 
modular forms, because after all the Jacobi forms which 
we consider in the applications arise from Siegel modular forms.
 
We rely on extensive computation with the invariant differential operators, which is treated in an appendix, to get hold of some crucial identities \eqref{DJ0}, \eqref{DJ1}, \eqref{DJ2} needed for further considerations. A more conceptual understanding here is desirable, nevertheless this paper could be a starting point for further such investigations. To get hold of `growth killing' operators for functions on $\mf H_1 \times \mf C$ seem to be rather non-trivial, and we approach this by taking cue from the corresponding known results for the Siegel upper half space $\mf H_2$, as shown by Maa{\ss} \cite{Ma}. Namely we extend a function on $\phi \colon \mf H_1 \times \mf C$ to $\mf H_2$ in natural ways (see \eqref{h}, \eqref{H}) so that they are functions invariant under the Klingen parabolic subgroup $C_{2,1}(\mf Z)$. Then we decompose the action of a growth killing operator $\mc M$ on $\mf H_2$ while restricting its automorphy to $C_{2,1}(\mf Z)$, to get certain linear combination of `growth killing' operators on $\mf H_1 \times \mf C$. The calculations in the appendix then allow us to write these operators in terms of the generators of the ring of invariant Jacobi differential operators. Once this is done, we can define the extended inner product, see \defiref{Pdef}.

Then in \propref{j2h}, we relate the new inner product $\{ \, , \}$ with that for the space of modular forms of half-integral weights on $\mf H_1$; this allows us to show easily that $\{ \, , \}$ is an extension of $\langle \, , \rangle$. We apply these in section~\ref{phi0} to Fourier Jacobi coefficients of Siegel modular forms and derive a bound $\{ \phi^\circ_m, \phi^\circ_m \} \ll m^k$ for the cuspidal parts of these objects. This in turn gives better bounds (cf. \cite{BV}) for the Fourier coefficients of these $\phi^\circ_m$. See \thmref{phi0-bd}. To do this, we need bounds for the Petersson norms of the corresponding Eisenstein parts. Sections~\ref{ekm-bd}, \ref{S2J} are devoted to deal with these. The main point here is that one is able to express $\mc E_{k,m}$ explicitly in terms of the Fourier-Jacobi coefficients of the Siegel Eisenstein series of degree $2$ 
and %also 
ultimately (see theorem 4.4)
in the form $$\mc E_{k,m} = E_{k,1} \vert_{k,m} \mc T_m$$ for some explicit Hecke operator $\mc T_m$. Here $E_{k,1}$ denotes the Jacobi Eisenstein series of weight $k$ and index $1$. This is a somewhat surprising result.

In order to consider the case of arbitrary lattices (thus not necessarily unimodular), one would have to consider Klingen-Eisenstein series of higher levels; where matters get complicated in terms of the description of their Fourier coefficients. However most of our abstract considerations concerning the restriction of differential operators from the Siegel to Jacobi spaces go through, being at the level of Lie groups. It would be interesting to generalise our formulas to this situation.

\subsection*{Acknowledgements} 
The bulk of this work was done during mutual visits by
the authors. S.B. thanks IISc Bangalore for generous hospitality and S.D. thanks the University of Mannheim for providing excellent working conditions. He also thanks IISc. Bangalore, DST (India) and UGC centre for advanced studies for financial support. The authors are extremely grateful to the anonymous referee for a meticulous checking of the paper and for several helpful comments.

\section{Notation and preliminaries}
\subsection{General notation}
\begin{inparaenum}[(1)]
\item 
For a commutative ring $R$ with $1$, we denote by $M_{m,n}( R)$ to be set of $m \times n$ matrices with coefficients in $R$. If $m=n$, we put  $M_{m,n}( R)= M_n(R)$. We denote by $\mrm{Sym}_n(\mf R)$ (resp. $\mrm{Sym}_n(R)^+$) the space of symmetric (resp. positive definite) matrices over the reals $\mf R$. Further, the $n\times n$ identity matrix over a subring of $\mf C$ is denoted by $1_n$.

The rank of a matrix $M$ is denoted by $\mrm{rank}(M) = \mrm{r}(M)$. The notation $M[N] := N'MN$ for matrices of appropriate size is used, where $N'$ denotes the transpose of $N$. 

We define the set of half-integral, symmetric, non-negative matrices, by 
\[ \Lambda_n := \{ S= (s_{i,j})  \in M (n, \tfrac{1}{2}\mf Z ) \cap \symnr \mid s_{i,i} \in \mf Z, \text{ and } S \text{ is positive semi-definite}\} \]
and denote the subset of positive definite matrices in $\Lambda_n$ by $\Lambda^+_n$.

Throughout the paper, $\varepsilon$ denotes a small positive number which may vary at different places. Moreover the symbols $A \ll_c B$ and $O_S(T)$ have their standard meaning, implying that the constants involved depend on $c$ or the set $S$. Further $A \asymp B$ means that $c_1 A \leq B \leq c_2 A$ for constants $c_1,c_2>0$.

\item
For $T$ real and $Z \in M_n(\mf C)$ we define $e(TZ) := \exp(2 \pi i \mrm{tr}(TZ))$, where $\mrm{tr}(M)$ is the trace of the matrix $M$. We denote by 
\[\mf H_n := \{ Z \in M_n(\mf C) \mid Z=Z', \mrm{Im}(Z)>0 \}, \]
the Siegel upper half-space of degree $n$. For $Z \in \mf H_n$ we usually write $Z=X+iY$, with $ X=\re(Z), Y=\im(Z) $. We would mainly need these when $n=1,2$.
In particular, we will decompose $Z\in {\mf H}_2$ as
$Z= \begin{psm} z_1 & z_2\\
z_2 & z_4\end{psm} $ with $z_j=x_j+iy_j$. 
In the context of Jacobi forms however,  we will often use
the notation $Z=\begin{psm} \tau & z\\z &\tau'\end{psm}$
with $\tau=u+iv$, $z=x+iy$, ${\tau}'=u'+iv'$.

\end{inparaenum}

\subsection{Siegel modular forms}
\begin{inparaenum}[(1)]
\item
The symplectic group $\sptwor$ acts on Siegel's half-space in the usual way by $Z \mapsto g \langle Z \rangle=(AZ+B)(CZ+D)^{-1}$ and
on functions $f:{\mf H}_2\longrightarrow {\mf C}$ by the \lq\lq stroke\rq\rq operator: for $g=\left(
\begin{smallmatrix}
A & B\\ C & D\end{smallmatrix}\right)\in \sptwor$, we define $(f\mid_k g)(Z)= j(g,Z)^{-k} f(g \langle Z \rangle) $ where $j(g,Z)=\det(CZ+D)$.

We put $\Gamma_2 := \sptwo$. A holomorphic function $f$ on
${\mf H}_2$ is called a modular form for $\Gamma_2$ of weight $k$ if it satisfies the transformation law $ \, f \mid_k\gamma= f \, $ for all $\gamma=\left(\begin{smallmatrix}
A & B \\ C & D\end{smallmatrix}\right)\in \Gamma_2) $.
We denote the space of all such functions by $M^2_k$ and
$S^2_k$ will be the subspace of cusp forms. An element $f\in M^2_k$ has a Fourier expansion
\[ f(Z) =\sum_{T\in \Lambda_2} a(f,T) \exp (\mrm{tr} \ TZ ), \] 
where $\mrm{tr \ A}$ denotes the trace of the matrix $2 \pi i A$. Then $S^2_k$ consists of those $f \in M^2_k$ for which the Fourier expansion is supported on elements of $\Lambda_2^+$. Moreover $M^1_k$ denotes the space of modular forms on $\sltwo$ of weight $k$ and $S^1_k$ the space of cusp forms therein.

\item
We next define and set up notation for the several kinds of Eisenstein series in $M^2_k$.
For $0 \leq r \leq 1$, put $C_{2,r} := \{ g \in \Gamma_2 \mid g= \left(\begin{smallmatrix}
* & * \\ 0_{2-r,2+r} & * \end{smallmatrix}\right) \}$. Then $C_{2,r}$ is a subgroup of $\Gamma_2$, usually referred to as the Siegel ($r=0$) or Klingen parabolic subgroup ($r=1$) of $\Gamma_2$.
Given a cusp form $f \in S_k^r$ (for $r=0$, we take $f=1$), the Klingen Eisenstein series attached to $f$ is defined by
\begin{equation} \label{infklin}
E_{2,r}^k(Z) = \underset{ g \in  C_{2,r} \backslash \Gamma_2}\sum \  f(g \langle Z \rangle^*) j(g,Z)^{-k}
\end{equation}
\noindent where, for a $2 \times 2$ matrix $Z \in \mf H_2$, we denote by $Z^*$ the upper left $r \times r$ block of $Z$. It is well known that $E_{2,r}^k$ converges absolutely and uniformly when $k > n+r+1$ and defines an element of $M_k^2$. When $r=0$, $E_{2,0}^k$ is nothing but the Siegel's Eisenstein series of degree $2$. 

We put $M^{2,0}_k = \mf C \cdot E_{2,0}^k$, $M^{2,1}_k = \mf C \cdot \{ \cdot E_{2,1}^k(h , \cdot) \vert h \in S^1_k \}$ and $ M_{k}^{2,2} := S_k^2$. The structure theorem says that for $k >4$ one has the decomposition 
\begin{equation} \label{mkn}
M_k^2 = \bigoplus\limits_{r=0}^2 \, M_k^{n,r}. 
\end{equation}
We refer the reader to \cite{Fr, klingen} for basic facts on the theory of Siegel modular forms.
\end{inparaenum}

\subsection{Jacobi forms} \label{j-prel}
\begin{inparaenum}[(1)]
\item
We define $G^J(\mf R) := \sltwor \ltimes \mf R^2$ and put $\Gamma^J:= \sltwo \ltimes \mf Z^2$. Let us recall the embedding of $G^J(\mf R)$ into $\sptwor$ (as sets):
\begin{equation} \label{embed}
M = \left( \begin{pmatrix} a & b \\c & d  \end{pmatrix} , \left[ \lambda, \mu \right] \right) \longmapsto  \widetilde{M}:= \begin{pmatrix}
a & 0 & b & \mu \\
\lambda' & 1 & \mu' & 0 \\
c & 0 & d & - \lambda \\
0 & 0 & 0 & 1
\end{pmatrix},  
\end{equation}
which we use in a number of occassions. The space of Jacobi forms of weight $k$ and index $m \geq 1$ is defined to be the set of holomorphic functions $\phi \colon \mf H_1 \times \mf C \rightarrow \mf C$ which are automorphic with respect to the discrete group $\Gamma^J$, i.e., if we put $g= \left( \left( \begin{smallmatrix}a&b\\c&d\end{smallmatrix} \right) , [\lambda, \mu] \right) \in \Gamma^J$, then
\begin{equation*}
\phi \vert_{k,m} g :=(c\tau+d)^{-k}e^{2\pi im\left(-\frac{c(z+\lambda\tau+\mu)^2}{c\tau +d}\right) +\lambda^2\tau + 2\lambda z}  \phi\left(\frac{a\tau+b}{c\tau+d}, \frac{z+\lambda\tau+\mu}{c\tau+d}\right) = \phi(\tau,z),
\end{equation*}
Further we demand that the Fourier expansion of $\phi$ have the shape
\[  \phi(\tau,z)=
\sum_{n, r\in \mathbf Z, \, 4mn \ge r^2 } c_\phi(n,r) e(n \tau + rz).
 \]
Moreover $\phi$ belongs to the space of cusp forms $J^{cusp}_{k,m}$ if in the above Fourier expansion only terms with $n,r$ such that $4mn>r^2$ survive.

\item
We briefly recall the theta decomposition of Jacobi forms. It is well known that the Fourier coefficients $c_\phi(n,r)$ depend only on $D:=4mn-r^2$ and $r \bmod 2m$. Sometimes, in view of this, it is more convenient to write $c_\phi(n,r) = c_\phi(D,r)$. Moreover, any such Jacobi form $\phi(\tau,z)$ can be (uniquely) written as (or we sometimes say has the theta decomposition)
\begin{equation}\label{thetad}
\phi(\tau,z) = \sum_{\mu \bmod{2m}} h_{m,\mu}(\tau)\theta_{m,\mu}(\tau,z),
\end{equation}
where we have put 
$\theta_{m,\mu}(\tau,z) =  \sum_{r \in \mathbf{Z}, \, r \equiv \mu\bmod{2m} }  e( \frac{r^2}{4m} \tau + rz)$ for the (congruent) Jacobi theta series of weight $1/2$ and index $m$; and
$h_{m,\mu}(\tau) = \sum_{n \in \mathbf{Z}, \, n\ge {\mu}^2/4m} 
c_{\phi}(n,\mu)  e{( (n-\frac {\mu^2}{4m})\tau) }$ are the so-called theta-components of $\phi$. Note that the automorphy of $\phi$ implies that $h_{m,\mu} \in M^1_{k-1/2}(\Gamma(4m))$ for all $\mu$, where $M^1_{k-1/2}(\Gamma(4m))$ denotes the space of weight $k-1/2$ modular forms on the principal congruence subgroup $\Gamma(4m)$.

\item
For $f \in M^2_k$, denote by $\phi_m(f)$ its Fourier-Jacobi coefficients, where for $Z= \left(  \begin{smallmatrix} \tau & z \\ z & \tau' \end{smallmatrix} \right)$,
\[  f(Z) =  \sum_{m=0}^\infty \phi_m(f) e(m \tau')\]
and omit the dependence on $f$ whenever convenient. Note that $\phi_m(f) \in J_{k,m}$ and is a cusp form for all $m \geq 1$ if $f$ is a cusp form.

\end{inparaenum}

\section{Growth-killing invariant differential operators on ${\mf H}_2$ and on
${\mf H_1}\times {\mf C}$} \label{grk}
\subsection{Definition and some basic properties} \label{gr-defi}
We call a $ \sptwor$-invariant differential operator
${\mathbb D}={\mathbb D}^2_k$ on ${\mf H}_2$  growth-killing (with respect to $k$),
if for all $S,T\in \symtwor$ with $S$ and $T$ positive semidefinite and $Z=X+iY\in {\mf H}_2$
we have
\begin{equation} \label{D}
{\mathbb D}\left(\det(Y)^k\cdot e^{2\pi i\sigma(S-T)X)}e^{-2\pi \sigma(S+T)Y}\right)=
0\quad\mbox{unless} \q  S+T>0 .
\end{equation}

Similarly, we call a $G^{J}({\mf R})$-invariant differential operator 
${\mathbb D}^J={\mathbb D}^J_{k,m}$
growth-killing for index $m>0$, if for all $n,n', r,r'
\in {\mf R}$ with 
$\left( \begin{smallmatrix} n & \frac{r}{2}\\
\frac{r}{2} & m\end{smallmatrix}\right)$ 
and 
$\left(\begin{smallmatrix} n' & \frac{r'}{2}\\
\frac{r'}{2} & m\end{smallmatrix}\right)$ both positive-semidefinite we have 
\begin{equation} \label{DJ}
\mathbb{D}^J \left( v^ke^{-4m\frac{y}{v^2}}\cdot
e^{ 2\pi i((n-n')u +(r-r')x)) } \cdot e^{-2\pi ((n+n')v+(r+r')y}
\right)=0 
\end{equation}
unless $\left(\begin{smallmatrix} n+n' & \frac{r+r'}{2}\\
\frac{r+r'}{2} & 2m\end{smallmatrix}\right) > 0$.
The notion ``growth-killing'' will get justified later on. 

Concerning the case of symplectic groups,
such a differential operator is for $n=1$ given by
\begin{equation} \label{n=1}
 {\mathbb D} = \mbb D_k= 4y^2\frac{\partial}{\partial z}\frac{\partial}{\partial \bar{z}}
-k(k-1)= y^2\cdot\left(\frac{\partial^2}{\partial^2x}+
\frac{\partial^2}{\partial^2y}\right)-k(k-1) 
\end{equation}
for any $k \in \mf R$. For $\spnr$ in general in  \cite{BC}, following \cite{DK} a 
more abstract construction of such 
operators was described. Even earlier, for the special case of degree $2$, 
Maa{\ss} 
\cite{Ma} gave an explicit
formula for such an operator (with $\mrm{Id}$ denoting the identity map)
\begin{equation} \label{M}
 {\mathcal M} = 
\Phi_2+(k-\frac{1}{2})(k-\frac{3}{2})\left(\Phi_1+k(k-2)\cdot \mrm{Id}\right) 
\end{equation} 
where $\Phi_1, \Phi_2$ are $\spnr$-invariant differential operators defined by 
\begin{align} \label{phi12}
\Phi_1:= \tr\left( (Z-\bar{Z})\cdot \{(Z-\bar{Z})\partial \bar{Z}\}^t \cdot 
\partial Z\right)
\text{   and   }\ 
\Phi_2= \det(Z-\bar{Z})^{\frac{5}{2}}\partial^{[2]}\overline{\partial}^{[2]}
 \det(Z-\bar{Z})^{-\frac{1}{2}}.
\end{align} 
with
$$\partial Z:=\left(\begin{array}{cc} \frac{\partial}{\partial z_1}&
\frac{1}{2} \frac{\partial}{\partial z_2}\\
\frac{1}{2} \frac{\partial}{\partial z_2} &
\frac{\partial}{\partial z_4}\end{array}\right)$$
and $\partial^{[2]}:= \det(\partial Z)$.

The main reason to consider such growth killing differential 
operators is that
they allow us to implement in a convenient way the Rankin-Selberg method for non-cusp forms and also to be able to define an extended Petersson
inner product for arbitrary modular forms (not only for cusp forms!),
e.g. for $n=1$ we may define, for $f,g\in M_k(\sltwo)$
$$\{f,g\}:= -\int_{\sltwo\backslash \mf H_1} {\mathbb D_k}(f\cdot \bar{g} 
y^k) \frac{dxdy}{y^2}= 
 -\int_{\sltwo\backslash \mf H_1} {\mathcal K}(f,g) 
 \frac{dxdy}{y^2}$$ 
where
\begin{equation}
{\mathcal K}(f,g)= 4 f'\cdot \bar{g'}y^{k+2} +
2ik(f'\cdot\bar{g}-f\cdot \bar{g'})y^{k+1}.\label{residual}
\end{equation}
The integrand is then exponentially decaying for $y\to \infty$, in particular,
the integral converges, and it generalizes the usual Petersson inner product
$\langle \, ,  \rangle$ for cusp forms (up to a constant), i.e. $\{f,g\}=k(k-1)\cdot  \langle f,g \rangle$
if $f,g$ are cuspidal.

Note that it is unclear in general, whether this hermitian 
form is nondegenerate. In any case, we cannot expect it to be positive definite.
For a discussion of such matters, mostly in the more general context
of Siegel modular forms, see \cite{BC,Za}.

As far as we know, such growth-killing operators were not yet considered
for Jacobi groups. For some purposes, one can use the theta expansion
of Jacobi forms to reduce the problem to ordinary modular forms.
For us, such an approach is not sufficient,
we need a relation between such growth killing operators for
$\sptwor$ and $G^J(\mf R)$.

\subsection{Growth-killing operators on $\mf H_1 \times \mf C$}
We start from an arbitrary smooth 
function $f$ on ${\mf H_1}\times {\mf C}$, later on it will
be of type
$f(\tau,z)= \phi(\tau,z)\cdot \overline{\psi(\tau,z)}$ with Jacobi forms
$\phi,\psi$ of weight $k$ and index $N$.
Then we associate to $f$ two functions on  
${\mf H_1}\times {\mf C}$ and ${\mf H}_2$ as follows:
\begin{equation}
h(\tau,z):= f(\tau,z)\cdot v^k\cdot e^{-4\pi N \frac{y^2}{v}}
\label{h}
\end{equation}
\begin{equation}H(Z):= f(\tau,z)\cdot \det(Y)^k\cdot e^{-4\pi Nv'}
\label{H}
\end{equation}
It is the convenient to describe the connection between them  by the coordinate
\[ t:= v'-\frac{y^2}{v}, \]
which is invariant under the action of $G^{J}({\mf R})$. In fact $H(Z)= h(\tau,z)\cdot t^k\cdot e^{-4\pi Nt}$ so that
\[  H(\widetilde{M} \langle Z \rangle)= h(M \langle (\tau,z \rangle)\cdot t^k e^{-4\pi Nt} \]
holds for all $M\in G^J({\mf R})$ when embededed as $\widetilde{M}$ (cf. \eqref{embed}) in
$\sptwor$.

We consider now ${\mathcal M}(H)$: 
By inspection, taking into account the form of the differential operators
$\Phi_1$ and $\Phi_2$, in particular their degrees in $\partial_4$ ($=\frac{\partial}{\partial z_4}$) and 
$\overline{\partial_4}$ ($=\frac{\partial}{\partial \bar{z_4}}$), we get that we can write ${\mathcal M}(H)$ in the form
\[ {\mathcal M}(f\cdot e^{-4\pi N\cdot v'} \cdot \det(Y)^k)= \]
\[  \{D_0^J
(f\cdot v^k e^{-4\pi N \frac{y^2}{v}}) t^k
+ D_1^J
(f\cdot v^k e^{-4\pi N \frac{y^2}{v}}) t^{k+1}
+D_2^J(f\cdot v^k e^{-4\pi N \frac{y^2}{v}}) t^{k+2}\}\cdot
e^{-4\pi Nt}  \]
with $t$ as above.
Here the $D^J_r$ are automatically invariant differential   
operators for the Jacobi group
(w.r.t $k$ and $N$) with growth killing property.

It is somewhat painful to determine these Jacobi differential operators
explicitly in terms of generators of the ring of Jacobi differential 
operators.
We carry that computation out in the appendix and just quote the result
in a weak form, sufficient for our purpose:
\begin{eqnarray}
D^J_0 &=&0\label{DJ0}\\
D^J_1 & = & 2R(k-\frac{1}{2})(L_1+r)+ R^J_1 \label{DJ1}\\
D^J_2 & = & R^2\cdot (L_1+r)+R^J_2 \label{DJ2}
\end{eqnarray}

Here 
\[ r:=-(k-\frac{1}{2})(k-\frac{3}{2}), \q R:=-4N\cdot \pi.\]
and $R^J_1,R^J_2$ denote some invariant Jacobi differential operators which turn out
to be inessential for us, see subsection~\ref{fudge}.
The essential Jacobi differential operator $L_1$
is explictly given (following \cite{Ochiai}) by
\begin{eqnarray*}
L_1&:=&-(z_1-\bar{z_1})^2
%\frac{\partial^2}
{\partial_1\bar{\partial_1}
}
- (z_2-\bar{z_2})^2\di\dib \\
&&
-(z_1-\bar{z_1})(z_2-\bar{z_2})(\bar{\partial_1}\di+\partial_1\dib),
\end{eqnarray*}  
where we put $\partial_1:=\frac{\partial}{\partial z_1}$
and similarly for $\bar{\partial_1}$.
\subsubsection{A first vanishing result} \label{fudge}
The ``inessential'' growth killing Jacobi differential operators
from above share the property that
all their summands (when written as polynomials in derivatives with possibly
nonconstant coefficients on the left) involve 
 
$$\left(\frac{\partial}{\partial x}\right)^{\alpha} \quad \mbox{or}\quad
\left(\frac{\partial}{\partial y}\right)^{\beta} \quad \mbox{or}\quad
y\frac{\partial^2}{\partial^2 x} \quad \mbox{or}\quad
y\frac{\partial^2}{\partial^2y}$$ 
with nontrivial $\alpha$ or $\beta$, see appendix.
Let us denote this property by (*).

\begin{prop}
Let ${\mathcal D}$ be a 
growth killing Jacobi differential 
operator w.r.t. $k$ and $m$ with property (*). 
Then for any Jacobi forms $\phi,\psi\in J_{k,m}$
we have
\begin{equation}
\int_{\Gamma^J\backslash {\mf H}_1\times {\mf C}}
{\mathcal D}(\phi\overline{\psi}e^{-4\pi N\frac{y^2}{v}}v^k) 
\frac{dudxdvdy}{v^3}=0
\label{vanish}
\end{equation}  
\end{prop}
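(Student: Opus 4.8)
The plan is to integrate by parts, using property (*) to peel one derivative off each summand and to exploit the periodicity built into a fundamental domain for $\Gamma^J$. I would first record the two facts that make the integral meaningful. Writing $F:=\phi\overline{\psi}\,e^{-4\pi N\frac{y^2}{v}}v^k$, the Jacobi transformation law shows that $F$ is exactly the Petersson density of index $N$ and is therefore $\Gamma^J$-invariant; since $\mc D$ is a $G^J(\mf R)$-invariant operator, $\mc D(F)$ is $\Gamma^J$-invariant as well, so the integrand descends to the quotient and \eqref{vanish} is well defined. Because $\mc D$ is growth-killing, $\mc D(F)$ decays rapidly as $v\to\infty$, so there is no contribution from the cusp and the integrations by parts below are legitimate. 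I would fix the fundamental domain as $\tau$ ranging over the standard fundamental domain for $\sltwo$ and, for each $\tau$, $z$ ranging over a fundamental parallelogram for $\mf Z\tau+\mf Z$; equivalently, after the substitution $z=\mu+\lambda\tau$ the $z$-fibre becomes the fixed unit square $\mu,\lambda\in[0,1)$, on which $F$ is doubly periodic by invariance under $[0,1]$ (i.e.\ $z\mapsto z+1$) and $[1,0]$ (i.e.\ $z\mapsto z+\tau$).

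Next I would treat the summands of $\mc D$ one at a time. Every summand has coefficients depending only on $v$ and $y$ (as one reads off from the explicit shape of $L_1$ and of the $R^J_i$ computed in the appendix), so each is constant in $x$. For the summands carrying a factor $(\partial/\partial x)^\alpha$ or $y\,\partial^2/\partial x^2$, I would integrate by parts in $x$: at each fixed height $y$ the parallelogram has unit width in $x$, and $F$ together with all its derivatives is periodic of period $1$ in $x$, so the boundary terms cancel and these summands contribute $0$. This is the clean half of the argument and disposes of all the $x$-derivative terms permitted by (*).

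The genuinely delicate part, and the main obstacle, is the remaining summands, which by (*) carry $(\partial/\partial y)^\beta$ or $y\,\partial^2/\partial y^2$. In the straightened coordinates one has $\partial/\partial y=\tfrac1v(\partial_\lambda-u\,\partial_\mu)$; the $\partial_\mu$-part integrates to $0$ by $\mu$-periodicity exactly as above, and one is reduced to $\lambda$-integrations by parts on $[0,1]$. The difficulty is that the coefficients depend on $y=\lambda v$, hence on $\lambda$, so the individual summands are not $\Gamma^J$-invariant: a termwise integration by parts leaves both a boundary contribution at $\lambda=0,1$ and a bulk contribution from differentiating the coefficient, neither of which vanishes on its own. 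I expect the real work to lie here: one must use the invariance of the whole expression $\mc D(F)$ — equivalently, combine the summands produced by (*) — together with the explicit coefficients from the appendix, so that the $\lambda$-boundary terms cancel against each other via the $z\mapsto z+\tau$ periodicity of $F$ (aided by the vanishing of the $y$-coefficients at $y=0$), while the bulk terms coming from the $\lambda$-dependence of the coefficients cancel in aggregate.

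Once this aggregate cancellation in the $y$/lattice direction is established, every summand contributes $0$ and \eqref{vanish} follows. It is worth noting that the separation of (*) into the four listed building blocks is precisely what organizes this bookkeeping: the first and third feed the easy $x$-integration by parts, and the second and fourth feed the $\lambda$-integration by parts whose boundary and bulk terms must be shown to cancel.
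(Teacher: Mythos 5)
Your handling of the $x$-derivative terms is sound and agrees in substance with the paper (which executes the same step through Fourier orthogonality of the theta components rather than by parts). The genuine gap is exactly where you yourself locate ``the real work'': the summands carrying $(\partial/\partial y)^\beta$ or $y\,\partial^2/\partial y^2$. You reduce these to $\lambda$-integrations by parts on $[0,1]$, observe correctly that the $\lambda$-dependence of the coefficients destroys termwise periodicity, and then posit an ``aggregate cancellation'' of boundary and bulk contributions which you never establish. That conjectured cancellation is the entire content of the statement for these terms, so the proof is incomplete as written; moreover there is no reason to believe the summands of $\mc D$ assemble into exact $\lambda$-derivatives of periodic quantities, so it is doubtful the argument can be completed in this form.

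What is missing is the unfolding idea, which is the crux of the paper's proof and removes the need for any cancellation between terms. The paper first substitutes the theta decompositions $\phi=\sum_\mu g_\mu\vartheta_{m,\mu}$, $\psi=\sum_\mu h_\mu\vartheta_{m,\mu}$. Integration over $x\bmod 1$ then forces the diagonal terms (equal theta index $\mu=\nu$ and equal lattice parameter $l=l'$), and --- this is the point --- the surviving terms carry a factor which is a sum over $l\in\mf Z$ of a single Gaussian in $y$ translated by the lattice $v\mf Z$. Combining this sum with the compact integration $y\bmod v$ \emph{unfolds} the $y$-integral to all of $\mf R$. On $\mf R$ the offending terms die individually and trivially: $\int_{\mf R}\partial_y\bigl(e^{-4\pi m y^2/v}\bigr)\,dy=0$ and $\int_{\mf R} y\,\partial_y^2\bigl(e^{-4\pi m y^2/v}\bigr)\,dy=0$ by integration by parts with \emph{no} boundary terms, thanks to Gaussian decay at $\pm\infty$; this is \eqref{zero} in the paper. (The paper also records that the $x$-free part $\mc D_0$ of $\mc D$ remains invariant under $(\tau,z)\mapsto(\tau,z+\lambda\tau)$ when applied to $x$-independent functions, which legitimizes exchanging $\mc D_0$ with the sum over $l$ in the unfolding.) So your compact-domain obstacle, with its boundary-versus-bulk bookkeeping, is precisely what the expansion-plus-unfolding device is designed to avoid: after unfolding, every summand permitted by property (*) vanishes on its own.
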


\begin{proof} We follow the strategy of Eichler-Zagier \cite{EZ}, who 
showed how to
relate a Petersson product of two Jacobi cusp forms to the
Petersson products of the half-integral weight cusps forms
associated with them: 

The  fundamental domain for $\Gamma^J$ can be described as follows:
\begin{itemize} 
\item
$\tau = u+ i v\in \sltwo\backslash {\mf H_1}$
\item $z=x+iy$ with $x \bmod 1$, $y\bmod v$
%\item $\tau'=u'+iv'$ with $u'\bmod 1$, $vv'-y^2>0$ 
\end{itemize}
It is the integration over $x$ and $y$ (over compact domains) , which is responsible for the vanishing:\\
We employ the theta expansions
of $\phi_N,\psi_N$:
\begin{equation} \label{theta}
\phi_m(\tau,z)=\sum_{\mu} g_{\mu}\vartheta_{m,\mu}, \q \psi_m(\tau,z)=\sum_{\mu} h_{\mu}\vartheta_{m,\mu}
\end{equation}
where the theta functions are defined as in section~\ref{j-prel}.

Integration over $x\bmod 1$ yields for arbitrary nonnegative integers $\alpha, l,l'$ that 
\[ \int_0^1 \frac{\partial^{\alpha}}{\partial^{\alpha} x}
\left(e^{2\pi i((\nu+2ml) z -\overline{(\mu+2l'm)} \bar{z})} \right)dx = \delta_{0,\alpha}
\delta_{\nu,\mu} \delta_{l,l'}e^{-4\pi (\mu+2ml) y}, \]

therefore the summands involving 
$\frac{\partial}{\partial x}$ or
$y\frac{\partial^2}{\partial^2 x}$
do not contribute.

We fix $\mu=\nu$ and we expand ${\mathcal D}$ as
\[  {\mathcal D}={\mathcal D}_0+\cdots, \]
where ${\mathcal D}_0$ is the part of ${\mathcal D}$ free of derivatives w.r.t.
$x$. We point out that ${\mathcal D}_0$ is still an invariant differential
operator for the substitution
$(\tau,z)\mapsto (\tau,z+\lambda \tau)$, when applied to a function 
which does not depend on $x$.

We are left with
\[ \int_{\sltwo\backslash {\mf H}_1}\int_{{\mf Z}v\backslash {\mf R}} 
{\mathcal D}_0\left(g_r\overline{h_r} 
\sum_{l\in {\mf Z}}e^{-4\pi\frac{m}{v}(y+\frac{r}{2m}+l)^2}
v^k \right) dy \frac{dudv}{v^3}. \]

The summation over $l\in {\mf Z}$ and the integral over $y$
become
\begin{equation}\int_{-\infty}^{\infty} \Delta( e^{-4\pi m\frac{y^2}{v}})dy=
\left\{\begin{array}{ccl}
\sqrt{\frac{v}{4m}}&\mbox{if}& \Delta= id\\
0&\mbox{if}& \Delta= \frac{\partial}{\partial y}
\,\mbox{or}\, \Delta=y\frac{\partial^2}{\partial y^2} 
\end{array}\right. \label{zero}
\end{equation}
The conclusion concerning \eqref{vanish} follows.
Note that we used only the integrations w.r.t. $x$ and $y$,
which go over compact domains. The growth killing property was
not really used. 
\end{proof}

The first part of (\ref{zero}) will be used later on. We are now able to define an extended Petersson inner product on Jacobi forms, using the growth killing differential operator
\begin{equation} \label{DJl1}
 \mc D^J:= L_1+r .
 \end{equation}
\begin{defi} \label{Pdef}
For any Jacobi forms $\phi,\psi$ of weight $k$ 
and index $N$ we put
\[\{\phi,\psi\}:= \int_{\Gamma^J\backslash {\mf H_1}\times {\mf C}}
\mc D^J(\phi\bar{\psi} v^ke^{-4\pi N\frac{y^2}{v}}) \frac{dudxdy}{y^3}. \]
\end{defi}

\subsection{From Jacobi forms to modular forms of half-integral weight} 
In the previous section we have already done some steps towards
expressing the extended Petersson product of two Jacobi forms
by an integral involving the corresponding (vector-valued) 
modular forms of half-integral weight; we complete this procedure by showing the following.
\begin{prop} \label{j2h}
Let $\mc D^J$ be the growth killing Jacobi differential operator from \eqref{DJl1}. Then 
for all Jacobi forms $\phi,\psi\in J_{k,m}$ with
theta expansions as in \eqref{theta}, we have
\[ \int_{\Gamma^J\backslash {\mf H}_1\times {\mf C}}
\mc D^J(\phi\overline{\psi}e^{-4\pi N\frac{y^2}{v}}v^k) 
\frac{dudxdvdy}{v^3}= \]
\begin{equation} \frac{1}{\sqrt{4m}}\cdot
\int_{\sltwo\backslash {\mf H_1}} 
\sum_{\mu \bmod 2m} 
{\mbb D}_{k-\frac{1}{2}} \left(g_{\mu}\overline{h_{\mu}}v^{k-\frac{1}{2}}\right) 
\frac{dudv}{v^2}
\label{vanish2}
\end{equation}  
\end{prop}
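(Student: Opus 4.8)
The plan is to reduce the two-variable integral on $\mf H_1 \times \mf C$ to a one-variable integral over $\sltwo \backslash \mf H_1$ by carrying out the $x$- and $y$-integrations explicitly, exactly as in the proof of the preceding Proposition, and then recognizing the surviving differential operator as $\mbb D_{k-1/2}$ acting on the theta components. First I would substitute the theta expansions \eqref{theta} for $\phi$ and $\psi$ into $\phi\overline{\psi}$ and split the operator $\mc D^J = L_1 + r$ into the part $\mc D_0$ free of $\partial/\partial x$ and the remaining terms. By the $x$-integration computed in the previous proof (the Kronecker-delta identity forcing $\nu = \mu$, $l = l'$, $\alpha = 0$), all summands carrying a factor $\partial/\partial x$ drop out, and the diagonal terms $\mu = \nu$ survive with the Gaussian factor $\sum_l e^{-4\pi (m/v)(y + r/2m + l)^2}$ as in \eqref{zero}.

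Next I would perform the $y$-integration. After the $x$-integration we are left with $\mc D_0$ applied to $g_\mu \overline{h_\mu}$ times the Gaussian and $v^k$; here I expect the only piece of $L_1$ that survives is the term $-(z_1 - \bar z_1)^2 \partial_1 \bar\partial_1 = 4v^2 \partial_1 \bar\partial_1$ plus the constant $r$, since the genuinely $z$-dependent pieces of $L_1$ (those involving $\di$, $\dib$, i.e. $\partial/\partial z$) produce either $\partial/\partial x$-type derivatives that already vanished, or $\partial/\partial y$ and $y\,\partial^2/\partial y^2$ derivatives of the Gaussian, which integrate to zero by the second line of \eqref{zero}. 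Using the first line of \eqref{zero}, the $y$-integral of the Gaussian contributes the factor $\sqrt{v/4m}$. Combining this with the $v^k$ already present produces $v^{k+1/2}$ under the operator; after extracting the $1/\sqrt{4m}$ prefactor and absorbing one power of $v$ to match $v^{k-1/2}$, the operator $4v^2 \partial_1\bar\partial_1 + r$ acting on $g_\mu \overline{h_\mu} v^{k-1/2}$ is precisely $\mbb D_{k-1/2}$ by \eqref{n=1}, since $r = -(k-\tfrac12)(k-\tfrac32) = -(k')(k'-1)$ with $k' = k - \tfrac12$.

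The main obstacle I anticipate is the careful bookkeeping of how $L_1$ acts once the theta series is inserted and how the change of the $y$-range from $\mf Z v \backslash \mf R$ (compact, over $y \bmod v$) to all of $\mf R$ interacts with the sum over $l \in \mf Z$; this unfolding step, which trades the finite $y$-interval and the lattice sum for a single integral over $\mf R$, is what makes \eqref{zero} applicable, and it must be justified that the operator $\mc D_0$ commutes with this unfolding (it does, because $\mc D_0$ is invariant under $(\tau,z) \mapsto (\tau, z + \lambda\tau)$ when applied to $x$-independent functions, as noted in the previous proof). The other delicate point is verifying that no cross term with $\mu \neq \nu$ contributes and that the power-of-$v$ accounting yields exactly $v^{k-1/2}$ inside $\mbb D_{k-1/2}$ and $v^{-2}\,du\,dv$ as the remaining measure, rather than some other normalization; this is a direct but error-prone computation that I would check by tracking the Jacobian $dudxdvdy/v^3$ through each integration. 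Once these are settled, the identity \eqref{vanish2} follows by summing over $\mu \bmod 2m$.
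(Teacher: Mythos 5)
Your overall route (theta expansion, $x$-integration, unfolding the $y$-integration against the lattice sum, then applying \eqref{zero}) is the same as the paper's, but the central computational claim in your second paragraph is false, and the proof does not survive it. You assert that the pieces of $L_1$ involving $\di$, $\dib$ disappear after integration ``by the second line of \eqref{zero}''. But \eqref{zero} asserts vanishing exactly for $\Delta = \frac{\partial}{\partial y}$ and $\Delta = y\frac{\partial^2}{\partial y^2}$, whereas the operators these summands actually produce on $x$-independent functions are $y^2\frac{\partial^2}{\partial y^2}$ (from the summand $4y^2\di\dib$ of \eqref{summands}, since $4\di\dib = \partial_x^2+\partial_y^2$) and $y\frac{\partial}{\partial y}$ (from the summand $4vy\bigl(\bar{\partial_1}\di + \partial_1\dib\bigr)$). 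Neither of these integrates to zero against the Gaussian: with $a = 4\pi m/v$,
\begin{equation*}
\int_{-\infty}^{\infty} y\,\frac{\partial}{\partial y}\Bigl(e^{-ay^2}\Bigr)\,dy = -\sqrt{\pi/a}, \qquad
\int_{-\infty}^{\infty} y^2\,\frac{\partial^2}{\partial y^2}\Bigl(e^{-ay^2}\Bigr)\,dy = 2\sqrt{\pi/a}.
\end{equation*}
The paper's proof keeps precisely these terms (via the identities $y^2\partial_y^2 F = \partial_y(y^2\partial_y F) + 2F - 2\partial_y(yF)$ and $y\partial_y F = \partial_y(yF) - F$, whose total-derivative parts are what \eqref{zero} actually kills): up to the factor $1/\sqrt{4m}$, the second summand contributes $2g\bar{h}v^{k+1/2}$ and the third contributes $-2i\bigl((\partial_1 g)\bar{h} - g\overline{\partial_1 h}\bigr)v^{k+3/2} - 2\bigl(k+\tfrac12\bigr)g\bar{h}v^{k+1/2}$.

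These contributions are not ignorable; they are what makes the proposition true. The first summand alone gives
\begin{equation*}
4v^2\partial_1\bar{\partial_1}\bigl(g\bar{h}v^{k+\frac12}\bigr) = 4(\partial_1 g)\overline{\partial_1 h}\,v^{k+\frac52} + 2i\bigl(k+\tfrac12\bigr)\bigl((\partial_1 g)\bar{h} - g\overline{\partial_1 h}\bigr)v^{k+\frac32} + \bigl(k+\tfrac12\bigr)\bigl(k-\tfrac12\bigr)g\bar{h}\,v^{k+\frac12},
\end{equation*}
so your truncated operator $4v^2\partial_1\bar{\partial_1}+r$, applied to $g\bar{h}v^{k+1/2}$, has cross-term coefficient $2i(k+\tfrac12)$ and a surviving diagonal term $(2k-1)g\bar{h}v^{k+1/2}$; as an operator identity it equals $\bigl(\mbb D_{k+\frac12} + (2k-1)\bigr)$ applied to $g\bar{h}v^{k+\frac12}$, which is not the integrand of \eqref{vanish2}. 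Only when the second and third summands are added does the cross-term coefficient drop from $k+\tfrac12$ to $k-\tfrac12$ and do all $g\bar{h}$ terms cancel identically, leaving $4(\partial_1 g)\overline{\partial_1 h}v^{k+\frac52} + 2i\bigl(k-\tfrac12\bigr)\bigl((\partial_1 g)\bar{h} - g\overline{\partial_1 h}\bigr)v^{k+\frac32}$, which, after the measure change to $du\,dv/v^2$, is exactly $\mbb D_{k-\frac12}\bigl(g\bar{h}v^{k-\frac12}\bigr)$ by \eqref{residual}. Your proposed repair of ``absorbing one power of $v$'' cannot rescue the computation either: multiplication by $v$ does not commute with $4v^2\partial_1\bar{\partial_1}$ (it generates additional $2iv^2(\partial_1-\bar{\partial_1})$-terms), so $v^{-1}\bigl(4v^2\partial_1\bar{\partial_1}+r\bigr)\bigl(g\bar{h}v^{k+\frac12}\bigr)$ and $\bigl(4v^2\partial_1\bar{\partial_1}+r\bigr)\bigl(g\bar{h}v^{k-\frac12}\bigr)$ are genuinely different functions. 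In short, \eqref{vanish2} holds as an identity of integrands only because of the exact cancellation produced by the very terms you discarded.
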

Here $\mbb D_{k- 1/2}$ denotes the growth killing differential operator defined in \eqref{n=1}. This proposition is analogous to \cite[Thm.~5.3]{EZ}: we
reduce the extended Petersson product for Jacobi forms (defined by means
of $\mc D^J$) to the extended Petersson product for modular forms of 
half-integral weight. 

\begin{cor} \label{cusp-eis}
The extended Petersson product $\{ \, ,\}$ for Jacobi forms
of weight $k$ and index $m$ satisfies
\[ \{\phi,\psi\}= d_k\cdot \langle \phi,\psi \rangle \]
for all $\phi, \psi \in J_{k,m}$ such that at least one of them is a cusp form. Here $d_k$ is a  constant
independent of $m$ and non zero unless $k=\frac{1}{2}$ or $k=\frac{3}{2}$.
\end{cor}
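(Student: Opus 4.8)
The plan is to reduce the whole statement to the elliptic ($n=1$) situation via \propref{j2h} and the theta decomposition, and then to invoke the scalar identity for $\mbb D_{k-\frac12}$ recorded in the discussion following \e{n=1}. Throughout I write $\kappa:=k-\frac12$ for the weight of the theta components and use, as in \e{theta}, the expansions $\phi=\sum_\mu g_\mu\vartheta_{m,\mu}$ and $\psi=\sum_\mu h_\mu\vartheta_{m,\mu}$ with $g_\mu,h_\mu$ modular of weight $\kappa$. The key structural fact is that $\phi$ is a Jacobi cusp form if and only if all its components $g_\mu$ are cusp forms (likewise for $\psi$ and the $h_\mu$), so the hypothesis ``at least one of $\phi,\psi$ cuspidal'' translates into cuspidality of one full family of components.

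First I would rewrite the classical Jacobi Petersson product in theta coordinates. Inserting the expansions into $\langle\phi,\psi\rangle=\int_{\Gamma^J\backslash\mf H_1\times\mf C}\phi\overline{\psi}\,v^k e^{-4\pi m y^2/v}\,\frac{du\,dx\,dv\,dy}{v^3}$ and integrating over $x\bmod 1$ and $y\bmod v$, the orthogonality of the $\vartheta_{m,\mu}$ in $x$ together with the first case of \e{zero} (the Gaussian integral $\sqrt{v/4m}$) collapses the double sum to its diagonal and produces the prefactor $\tfrac{1}{\sqrt{4m}}$. Tracking the powers of $v$ (namely $v^{k}\cdot v^{1/2}\cdot v^{-3}=v^{\kappa-2}$) this gives
\[ \langle\phi,\psi\rangle=\tfrac{1}{\sqrt{4m}}\int_{\sltwo\backslash\mf H_1}\Big(\sum_{\mu\bmod 2m}g_\mu\overline{h_\mu}\,v^{\kappa}\Big)\frac{du\,dv}{v^2}, \]
where the integrand is $\sltwo$-invariant because the Weil representation carried by the $\vartheta_{m,\mu}$ is unitary.

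Next I would apply \propref{j2h}, which expresses $\{\phi,\psi\}$ as $\tfrac{1}{\sqrt{4m}}\int_{\sltwo\backslash\mf H_1}\sum_\mu\mbb D_{\kappa}(g_\mu\overline{h_\mu}v^{\kappa})\frac{du\,dv}{v^2}$ with exactly the same prefactor. Since $\mbb D_\kappa$ is linear and each $g_\mu$ is holomorphic while $\overline{h_\mu}$ is antiholomorphic, the pointwise identity behind \e{residual} gives $\mbb D_\kappa(\sum_\mu g_\mu\overline{h_\mu}v^\kappa)=\sum_\mu\mathcal K(g_\mu,h_\mu)$, and Green's theorem (the Rankin--Selberg unfolding), valid for any real weight here taken to be $\kappa$, yields $-\int\mbb D_\kappa(\,\cdot\,)\frac{du\,dv}{v^2}=\kappa(\kappa-1)\int(\,\cdot\,)\frac{du\,dv}{v^2}$ provided the boundary terms vanish. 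Comparing this with the display above, the common factor $\tfrac{1}{\sqrt{4m}}$ cancels and I obtain $\{\phi,\psi\}=-\kappa(\kappa-1)\langle\phi,\psi\rangle$, so that $d_k=-(k-\tfrac12)(k-\tfrac32)$. This is manifestly independent of $m$ and vanishes exactly when $k=\tfrac12$ or $k=\tfrac32$; as a reassuring check it coincides with the constant $r$ of \e{DJ1}.

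The main obstacle is the vanishing of the boundary terms in the Green's-theorem step, which is precisely where cuspidality is needed: if $\phi$ is a cusp form then every $g_\mu$ is a cusp form, so each product $g_\mu\overline{h_\mu}v^\kappa$ decays rapidly toward the cusp and the boundary contributions drop out. For non-cuspidal $\phi,\psi$ these boundary terms are exactly what make $\{\,,\}$ a genuine extension of $\langle\,,\rangle$, so the step cannot be bypassed. A secondary point to verify is that the scalar weight-$\kappa$ identity, stated for $\sltwo$-modular forms, applies verbatim to the invariant combination $\sum_\mu g_\mu\overline{h_\mu}$ even though the individual $g_\mu$ are only modular on $\Gamma(4m)$; this is immediate because the derivation uses only holomorphy of the components and $\sltwo$-invariance of the pointwise pairing, both of which hold for the theta components here.
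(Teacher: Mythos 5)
Your proof is correct and takes essentially the same route as the paper: both reduce the statement via \propref{j2h} and the theta decomposition to the half-integral-weight setting and then apply the scalar identity for the growth-killing operator $\mbb D_{k-\frac{1}{2}}$, yielding $d_k=-(k-\tfrac12)(k-\tfrac32)=r$, consistent with the paper's conventions and with the stated vanishing exactly at $k=\tfrac12,\tfrac32$. The only difference is that where the paper simply cites \cite{chiera} for the weight-$(k-\tfrac12)$ statement, you prove that ingredient inline by the Green's/Stokes unfolding argument (correctly identifying the vanishing of boundary terms, guaranteed by cuspidality of one family of theta components, as the crux), which is the standard proof of the cited fact.
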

The corollary follows from the proposition above and the 
corresponding statement for the growth killing differential operator for 
weight $k-\frac{1}{2}$, see eg. \cite{chiera}. 
%{\bf remove??-->} This is the only point where we make us of the proposition. {\bf we use it twice..}
A more intrinsic proof of the corollary within the 
theory of Jacobi forms should also be 
possible.

\begin{proof}
We write $\mc D^J=L_1+r$ as
 \begin{equation}
4v^2\frac{\partial^2}{\partial z_1 \partial \bar{z_1}}
+4y^2\frac{\partial}{\partial z_2}\frac{\partial}{\partial\bar{z_2}}
+4vy \left(\bar{\partial_1}\frac{\partial}{\partial z_2}+\partial_1
\frac{\partial}{\partial\bar{z_2}}\right)+r.
\label{summands}
\end{equation}

The integration over $x\bmod 1$ and $y\bmod v$ can be handled
in essentially the same way as in the previous section.
Before finally integrating over $\sltwo\backslash {\mf H_1}$
we should therefore consider the integral

$$\int_{-\infty}^{\infty} (L_1+r)(\sum_{\mu} g_{\mu}\overline{h_{\mu}}
e^{-4\pi m\frac{y^2}{v}}v^k ). dy$$

We do this for fixed $\mu$ and for each of the summands of $L_1+r$
in (\ref{summands}) separately: 
\subsubsection{First summand}
The differential operator ${\partial_1\bar{\partial_1}}$
does not involve $y$, therefore integration over $y$ gives just a
factor $\sqrt{\frac{y}{4m}}$ and we get $\frac{1}{\sqrt{4m}} \cdot 4v^2\partial_1\bar{\partial_1}\left(g\cdot \bar{h} v^{k+\frac{1}{2}}\right)$ equals
\[
\frac{1}{\sqrt{4m}}  \left( 4(\partial_1 g)\cdot \overline{\partial_1h} v^{k+\frac{5}{2}}
+ 2i(k+\frac{1}{2})\left((\partial g)\bar{h}-g\cdot \overline{\partial_1 h}
\right) v^{k+\frac{3}{2}}
+(k+\frac{1}{2})(k-\frac{1}{2})g\cdot\bar{h}\cdot v^{k+\frac{1}{2}}   \right). \]

\subsubsection{Second summand}
First we mention that 
the identity
\[ \left(y^2\frac{\partial^2}{\partial y^2}\right)(F)=
\frac{\partial}{\partial y}\left(y^2\cdot 
\frac{\partial}{\partial y}F\right)
+2F-2 \frac{\partial}{\partial y}(y\cdot F) \]
holds (for any reasonable function $F$, say smooth, on ${\mf R})$ .
Therefore - using (\ref{zero}) - 
the contribution of the second summand to the integral over $y$
is (again up to the factor $\frac{1}{\sqrt{4m}}$)
\[ 2\cdot g\cdot \bar{h} v^{k+\frac{1}{2}} . \]

\subsubsection{Third summand}
For $F$ as above, we use $y\frac{\partial}{\partial y}(F)=
\frac{\partial}{\partial y}\left(y\cdot F\right) - F$.

Therefore, the contribution of first part of third summand becomes ${2i}v
\bar{\partial_1}\left(g\cdot\bar{h}v^{k+\frac{1}{2}}\right)$ which simplifies to the expression
%$$%-{2i}y\frac{\partial}{\partial y}
%%\bar{\partial_1}\left(g\cdot\bar{h}v^{k+\frac{1}{2}}\right)v^{-1}=
%{2i}v
%\bar{\partial_1}\left(g\cdot\bar{h}v^{k+\frac{1}{2}}\right)=$$
% {2i}g\cdot (\overline{\partial_1 h})v^{k+\frac{3}{2}}+
%{2i}v\cdot g\bar{h} \frac{i}{2}(k+\frac{1}{2})v^{k-\frac{1}{2}} =
\[ {2i}g\cdot (\overline{\partial_1 h})v^{k+\frac{3}{2}}-
(k+\frac{1}{2})g\cdot\bar{h}v^{k+\frac{1}{2}} .\]

In a similar way, we get for the second part the contribution
\[ -{2i}(\partial_1g)\cdot (\overline{ h})v^{k+\frac{3}{2}}-
(k+\frac{1}{2})g\cdot\bar{h}v^{k+\frac{1}{2}}. \]

\subsubsection{Fourth summand}
Obviously, the contribution here is
$$\frac{r}{\sqrt{4m}} g\overline{h}v^{k+\frac{1}{2}} .$$

We collect all contributions to get the integrand for
the modified Petersson product 
(note that the change from $\frac{dudxdydv}{v^3}$
to $\frac{dudv}{v^2}$ changes the power of $v$)

\[ \frac{1}{\sqrt{4m}}\left(4(\partial_1 g)\cdot \overline{\partial_1h} v^{k+\frac{3}{2}}
+ 2i(k-\frac{1}{2})\left((\partial_1 g)\bar{h}
-g\cdot \overline{\partial_1 h}
\right) v^{k+\frac{1}{2}}\right) \]
which is exactly--up to the factor $\frac{1}{\sqrt{4m}}$ -the growth killing operator for functions of weight 
$k-\frac{1}{2}$. 
Note that the contributions involving $g\cdot\bar{h} v^{k-\frac{1}{2}}$
get cancelled.
\end{proof}

\subsection{Lemmas on growth killing operators: boundedness properties} 
Let $\mc M$ denote the invariant differential operator $\mf{H}_2$ from \eqref{M} studied by Maa{\ss} \cite{Ma}  and $F,G \in M^2_k$. We would prove that $\mc M$ applied to certain Fourier series produces a decay, see the lemma below. This justifies our definition of `growth-killing' from section~\ref{gr-defi}. Even though the results in this section might be known to be true intutively, but seems not to be written down. Anyway we take some care to provide complete proofs, since they are crucial for our further investigations. The following proof easily generalises to any degree $n$.

\begin{lem} \label{phib}
For some $c>0$ and all $Z \in \mf H_2$,
\begin{align}
\mc M\left( F(Z) \overline{G(Z)} \det(Y)^k \right) \ll_k \exp (-c \, \tr Y) .
\end{align}
\end{lem}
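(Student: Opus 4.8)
The plan is to feed the Fourier expansions of $F$ and $G$ into the operator, apply $\mc M$ term by term, use the growth--killing property \eqref{D} to discard all but the positive terms, and estimate what survives by reduction theory. As a preliminary reduction I would observe that $\Psi(Z):=\mc M\!\left(F\overline G\,\det(Y)^k\right)$ is $\Gamma_2$--invariant: $\mc M$ is $\sptwor$--invariant by \eqref{M}, and under $Z\mapsto\gamma\langle Z\rangle$ one has $F\mapsto j(\gamma,Z)^kF$, $\overline G\mapsto\overline{j(\gamma,Z)}^k\overline G$ and $\det(Y)\mapsto|j(\gamma,Z)|^{-2}\det(Y)$, so $F\overline G\det(Y)^k$ is invariant. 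Hence it suffices to bound $\Psi$ on a Siegel fundamental set $\mathcal F$, where $Y$ is Minkowski--reduced and $Y\ge c_0 1_2$ for some $c_0>0$; this is exactly the region over which the later Rankin--Selberg integrals are taken, and the bound for arbitrary $Z$ follows from invariance together with the reduction--theoretic fact that a reduced representative of the $\Gamma_2$--orbit has trace at least a fixed multiple of $\tr Y$.

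On $\mathcal F$ I would write $F=\sum_{S\in\Lambda_2}a(F,S)\,e^{2\pi i\,\tr(SZ)}$ and likewise for $G$, so that
\[
F\overline G\det(Y)^k=\sum_{S,T\in\Lambda_2}a(F,S)\overline{a(G,T)}\,\det(Y)^k\,e^{2\pi i\tr((S-T)X)}\,e^{-2\pi\tr((S+T)Y)} .
\]
Since these series and all their derivatives converge absolutely and uniformly on $\{Y\ge c_0 1_2\}$, one may apply $\mc M$ termwise. By inspection of $\Phi_1,\Phi_2$ in \eqref{phi12} (the same bookkeeping of degrees that produced the decomposition of $\mc M(H)$ in section~\ref{grk}), $\mc M$ is a fourth--order operator whose coefficients are polynomials in the entries of $Y$ and in $\det(Y)^{\pm1}$; it therefore sends the $(S,T)$--term to $\det(Y)^k\,P_{S,T}(Y)\,e^{2\pi i\tr((S-T)X)}\,e^{-2\pi\tr((S+T)Y)}$, with $P_{S,T}$ a polynomial of bounded degree in the entries of $Y$, $Y^{-1}$, $S$ and $T$. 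The decisive point is that, by the growth--killing identity \eqref{D}, every term with $S+T\not>0$ is annihilated, so only the terms with $S+T>0$ remain.

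For the estimation I would take absolute values and use two inputs. First, the crude bound $|a(F,S)|\ll_{F,\eta}e^{2\pi\eta\tr S}$, valid for every $\eta>0$ since $F$ is bounded on the compact torus $\{X\bmod 1,\ Y=\eta1_2\}$; this is uniform in $S$ and sidesteps the failure of the sharper bound $|a(F,S)|\ll\det(S)^k$ at singular $S$. Second, the reduction inequality $\tr(SY)\gg s_{11}y_{11}+s_{22}y_{22}$ for reduced $Y$ and any $S\ge0$. Writing $u_{ii}=s_{ii}+t_{ii}$, positivity $S+T>0$ forces the integers $u_{11},u_{22}\ge1$, whence
\[
\tr((S+T)Y)\ \ge\ \tfrac{c_1}{2}\tr Y+\tfrac{c_1}{2}\bigl(u_{11}y_{11}+u_{22}y_{22}\bigr),
\]
the first summand using $u_{ii}\ge1$ and $y_{11}+y_{22}=\tr Y$. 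The first half yields the wanted factor $e^{-\frac{c_1}{2}\tr Y}$; the second half, combined with $y_{ii}\ge c_0$, produces $e^{-\frac{c_1c_0}{2}(\tr S+\tr T)}$, which dominates the coefficient bound and every polynomial factor (the sum over the off--diagonal entries $s_{12},t_{12}$, of which there are $\ll\sqrt{s_{11}s_{22}}$ and $\ll\sqrt{t_{11}t_{22}}$ many, is merely polynomial). Thus the double sum converges to a constant, and $\det(Y)^k(1+\|Y\|)^A\,e^{-\frac{c_1}{2}\tr Y}\ll e^{-c\tr Y}$, which gives the lemma on $\mathcal F$ and hence everywhere.

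The main obstacle I anticipate is the uniform extraction of a single exponential $e^{-c\tr Y}$: the least eigenvalue of an admissible $S+T>0$ is \emph{not} bounded below, so one cannot simply write $\tr((S+T)Y)\ge\lambda_{\min}(S+T)\tr Y$, and the argument genuinely needs $Y$ reduced so that the diagonal inequality is available and the integrality $u_{ii}\ge1$ can be used. The two supporting points also requiring care are the structural claim that $\mc M$ maps each Fourier term to $\det(Y)^kP_{S,T}(Y)(\cdots)$ with $P_{S,T}$ of controlled degree — where the explicit shape of $\Phi_1,\Phi_2$ and the vanishing \eqref{D} enter — and, for the literal ``all $Z$'' statement, the reduction--theoretic comparison of $\tr Y$ with the trace of the reduced representative.
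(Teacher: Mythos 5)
Your core argument coincides with the paper's proof: by $\Gamma_2$-invariance of $\mc M\left(F\overline G\det(Y)^k\right)$ one works on the Siegel fundamental domain $\mathcal F_2$, applies $\mc M$ termwise to the double Fourier series, discards all terms with $S+T\not>0$ by the growth-killing property \eqref{D}, estimates $a_F(S)$, $a_G(T)$ trivially by bounds of the shape $e^{\varepsilon\,\tr S}$, and then uses Minkowski reduction together with half-integrality of $R=S+T>0$ (so that $\tr(RY)\gg\tr R$ and $\tr(RY)\gg\tr Y$) to absorb both the coefficient growth and all polynomial factors. The paper organizes the combinatorics through the change of variables $R=S+T$, $L=S-T$ and the count $\#\{L=L'\mid \pm L\le R\}\ll\det(R)^{3/2}$, while you count off-diagonal entries via $|s_{12}|\le\sqrt{s_{11}s_{22}}$; this difference is immaterial.

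The one step that is genuinely wrong is your passage from $\mathcal F_2$ to all of $\mf H_2$. It is false that the reduced representative of a $\Gamma_2$-orbit has trace at least a fixed multiple of $\tr Y$; reduction essentially \emph{minimizes} the trace within an orbit. Concretely, take $Y_0=\mathrm{diag}(2,3)$ and $U=\begin{psm}1&n\\0&1\end{psm}\in\gltwo$ embedded in $\Gamma_2$: the point with imaginary part $UY_0U'$ lies in the $\Gamma_2$-orbit of $iY_0$ and has trace $5+3n^2\to\infty$, while its reduced representative keeps trace $5$. In fact no argument can rescue the literal ``all $Z\in\mf H_2$'' statement: since $\mc M\left(F\overline G\det(Y)^k\right)$ is constant on each $\Gamma_2$-orbit and $\tr Y$ is unbounded on every orbit, a bound $\ll e^{-c\,\tr Y}$ valid at every point would force $\mc M\left(F\overline G\det(Y)^k\right)\equiv 0$. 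This is really a defect of the lemma's wording: the paper's own proof, exactly like your fundamental-domain argument, establishes the bound only for $Z\in\mathcal F_2$ (equivalently, with $\tr Y$ read as the trace of the reduced representative), and that is all that is used later, e.g.\ in \lemref{phij-lem} and in the Rankin--Selberg integrals. So keep your argument on $\mathcal F_2$, which is correct and matches the paper, but delete the trace-comparison remark and state the result for reduced $Z$.
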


\begin{proof}
Since $\mc M$ is $\sptwo$-invariant, its enough to consider $Z$ in $\mathcal{F}_2$, the standard Siegel fundamental domain of degree $2$. Let the differential operator $\mc R_2$ be defined by the relation $\det(Y)^k \mc R_2 F := \mc M (\det(Y)^k F)$.
We start from the Fourier expansion of $\Phi_2\left( F(Z) \overline{G(Z)} \det(Y)^k \right) $:
\begin{align} \label{phif}
\underset{ \substack{ S,T \geq 0\\ S+T>0 }  }\sum a_F(S) \overline{a_G(T)} \det(Y)^k \mathcal{R}_2 ( e^{2 \pi i (S-T)X } e^{- 2 \pi \tr (S+T)Y} ).
\end{align}

Let us put in \eqref{phif} $R = S+T$ and $L = S-T$, so that $R>0$ and $R \geq L$. Since the Fourier coefficients are supported on $\Lambda_2 $, the expression \eqref{phif} can be written as
\begin{align} \label{phirl}
&\underset{  R>0, R \geq \pm L  }\sum a_F \left( \frac{R+L}{2} \right) \overline{a_G \left( \frac{R-L}{2} \right)} \det(Y)^k \mathcal{R}_2 ( e^{2 \pi i LX } e^{- 2 \pi \tr RY} ) 
\end{align}
Now $Y$ is Minkowski-reduced, so that we have, for some $\delta_2>0$, the inequalities
\begin{align}
 \delta_2^{-1} Y \leq \mathrm{diag}(y_1,y_2) \leq \delta_2 Y; \q 
 y_\nu \geq \sqrt{3}/2 \q \text{ for  } \nu = 1, 2,
 \end{align}
where in the rest of the proof we put $M_i := M_{i,i}$ for a matrix $M$.
This implies that for some $\delta>0$
\begin{align}
\tr(RY) \geq \delta \, \tr(R); \q \tr(RY)  \geq \delta \, \tr(Y),
\end{align}
where in the second inequality, we take into account that $R$ is half integral.

Since the Fourier series for $F$ and $G$ converge absolutely for $Z=\frac{\delta}{4} i 1_2$, we may trivially estimate the Fourier coefficients by
\begin{align} \label{trest}
a_F(S) \ll \exp( \pi \delta/2 \, \tr(S)); \q a_G(T) \ll \exp( \pi \delta/2 \, \tr(T)).
\end{align}

Let $ \breve{L}$ denote the symmetric matrix obtained by taking absolute values in the entries of $L$. Using \eqref{trest} in \eqref{phirl} and bounding absolutely, we get that $\Phi_2\left( F(Z) \overline{G(Z)} \det(Y)^k \right) $ is at most (note that $X$ is bounded)
\begin{align}
\underset{R>0, R \geq \pm {L}}\sum \mc{P}(Y) \mc{Q}(R, \breve{L}) e^{- 2 \pi \tr(RY)}  ;
\end{align}
where $\mc{P}, \mc{Q}$ are polynomials in their respective arguments.
Now $\mc{P}(Y) \ll \tr(Y)^\ell$, $\mc{Q}(R, \breve{L}) \ll \tr(R)^{\ell'}$. To see this, note that $|L_i| < R_i$ follows from $ \mp L \leq R$, and moreover when $i \neq j$, one has the inequalities
\begin{align} \label{rij}
 2(R_i+R_j) \geq R_i\pm L_i + R_j \pm L_j \geq 2 |R_{i,j} \pm L_{i,j}| \geq 2 (|L_{i,j}| - |R_{i,j}|), 
\end{align} 
which imply that $|L_{i,j}| \leq \frac{3}{2} (R_i+R_j)$. So finally we are reduced to bounding
\begin{align}
\underset{R>0}\sum \left( \# \{L=L^t \mid \pm L \leq R\} \right) \tr(Y)^{\ell'} \tr(R)^\ell e^{- \pi \delta/2 \tr(R)}  e^{- \pi \delta \tr(Y) }  \label{est}
\end{align}
From \eqref{rij}, one has the following inequality: 
\[  \# \{L=L^t \in \symtwor \mid \pm L \leq R\} \ll \det(R)^{\frac{3}{2} }. \]

To see this, note that we can suppose that $R$ is reduced and then can easily deduce the number of possibilities for $L$ are atmost
\[ \prod_i (2 r_i+1) \prod_{i<j} (2 (r_i r_j)^{1/2} +1) \ll \det(R)^{\frac{3}{2} }. \]

Thus for some $\beta>0$, we would have to bound $\underset{R>0}\sum \tr(R)^{\beta } e^{- \pi \delta \tr(R)} e^{- 2 \pi \delta \tr(Y)} $. Putting $\tr(R) = r$ in the above and noting that $\# \{ R \in \Lambda^+_n \mid \tr(R)=r  \} \ll r^{3}$, we have for some $\beta'>0$, the bound
\[
\sum_{r=1}^\infty r^{\beta'} e^{- \pi \delta r} \cdot e^{- 2 \pi \delta \tr(Y)} \ll e^{- 2 \pi \delta \tr(Y)} .   \qedhere \]
\end{proof}

\begin{lem} \label{phij-lem}
\begin{align} \label{phij}
\mc M \left(( \phi_m(\tau,z) \overline{\psi_m(\tau,z)} e^{- 4 \pi m v'} \det(Y)^k \right) \ll 1.
\end{align}
for all $ \left( \begin{smallmatrix}\tau & z \\ z & \tau' \end{smallmatrix}\right) \in \mathcal{C}_{2,1} \backslash \mf H_2$, where the implied constant depends only on $k$.
\end{lem}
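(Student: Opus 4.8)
The plan is to mimic the proof of \lemref{phib}, but now at the Klingen cusp and after stripping off the $\tau'$-dependence by means of the $G^J(\mf R)$-invariant coordinate $t := v'-y^2/v = \det(Y)/v$. Since $\mc M$ is $\sptwor$-invariant and the auxiliary function $H(Z)=\phi_m\overline{\psi_m}\,e^{-4\pi m v'}\det(Y)^k$ from \eqref{H} is invariant under $C_{2,1}(\mf Z)$ (it is independent of $u'=\re(\tau')$, and $h:=\phi_m\overline{\psi_m}\,v^k e^{-4\pi m y^2/v}$ is $\Gamma^J$-invariant, while $H=h\,t^k e^{-4\pi m t}$ depends on the remaining variables only through $t$), it suffices to bound $\mc M(H)$ on a fundamental domain for $C_{2,1}(\mf Z)\backslash\mf H_2$. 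There one may take $\tau=u+iv$ with $v\ge\sqrt3/2$, the variable $z=x+iy$ reduced so that $x\bmod 1$ and $y\bmod v$, and $t$ bounded below by a positive constant (e.g. $t\ge\tfrac34 v$ for a Minkowski-reduced $Y$).

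First I would invoke the decomposition established just before the lemma: since $D_0^J=0$ by \eqref{DJ0}, one has
\[ \mc M(H)=\bigl(D_1^J(h)\,t^{k+1}+D_2^J(h)\,t^{k+2}\bigr)e^{-4\pi m t}. \]
Because $t\ge t_0>0$ on the reduced domain, the scalar factors $t^{k+1}e^{-4\pi m t}$ and $t^{k+2}e^{-4\pi m t}$ are bounded (they even decay as $t\to\infty$). Hence everything reduces to proving that $D_1^J(h)$ and $D_2^J(h)$ are bounded on the Jacobi fundamental domain.

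This is the heart of the matter and the place where the terminology ``growth killing'' gets justified: although $h$ itself grows like $v^k$ as $v\to\infty$ whenever $\phi_m,\psi_m$ have nonzero constant terms, the operators $D_r^J$ annihilate this growth. Concretely, I would insert the Jacobi Fourier expansions of $\phi_m,\psi_m$ into $h$; each resulting summand, indexed by $c_{\phi_m}(n,r)\overline{c_{\psi_m}(n',r')}$, has exactly the shape appearing in the growth-killing condition \eqref{DJ}. Since $D_r^J$ is growth killing, every such summand is annihilated unless
\[ \begin{psm} n+n' & \tfrac{r+r'}2 \\[2pt] \tfrac{r+r'}2 & 2m\end{psm}>0,\qquad\text{i.e.}\qquad 2m(n+n')>\tfrac14(r+r')^2. \]
For the surviving terms I would apply $D_r^J$ (a differential operator with polynomial coefficients in $v$ and $y$) and complete the square in $y$ in the real exponent $-4\pi m y^2/v-2\pi(r+r')y-2\pi(n+n')v$, obtaining
\[ -\frac{4\pi m}{v}\Bigl(y+\frac{(r+r')v}{4m}\Bigr)^2-2\pi v\Bigl((n+n')-\frac{(r+r')^2}{8m}\Bigr). \]
The positivity condition above says precisely that the coefficient of $v$ in the second term is strictly positive, so the $y$-factor is a bounded Gaussian while the $v$-factor decays exponentially. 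Bounding $c_{\phi_m}(n,r),c_{\psi_m}(n',r')$ trivially from absolute convergence of the Fourier series (as in the Siegel estimate \eqref{trest}) and absorbing the polynomial factors from $D_r^J$ into the exponential decay, the series for $D_r^J(h)$ converges absolutely to a bounded (indeed exponentially decaying in $v$) function, exactly as in the counting-and-summation step of \lemref{phib}.

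The main obstacle is precisely this last point: one must recognize that the growth-killing property is not merely a formal annihilation of the constant term, but that, through the determinant condition in \eqref{DJ}, it forces the coefficient of $v$ after completing the square to be strictly negative. Everything else—the reduction to the fundamental domain, the boundedness of the $t$-factors, and the convergence of the resulting Fourier series—is routine and parallels \lemref{phib}. I would also note that, unlike in \lemref{phib}, the final estimate is only $\ll 1$ rather than exponential decay, since the reduced Klingen domain has a compact core (where $v$ and $t$ both remain bounded) on which one obtains no decay, only uniform boundedness.
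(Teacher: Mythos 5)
Your proof takes a genuinely different route from the paper's, and it contains real gaps. The paper's own proof is a two-line domination argument carried out entirely on the Siegel side: expanding $\phi_m$ and $\psi_m$, the quantity $\mc M\left(\phi_m\overline{\psi_m}e^{-4\pi m v'}\det(Y)^k\right)$ is literally the subseries of the Fourier series estimated in \lemref{phib} consisting of those pairs $(S,T)$ whose lower right entries equal $m$; since that series was bounded by summing absolute values, every subseries obeys the same bound, and that bound is a single constant which does not depend on which subseries (i.e.\ on which $m$) one takes. Your plan instead descends to $\mf H_1\times\mf C$ via \eqref{DJ0}--\eqref{DJ2}. That reduction, and your completion of the square identifying the growth-killing condition \eqref{DJ} with strict positivity of the coefficient of $v$, are correct. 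The analytic core, however, does not go through as written.

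First, the convergence step fails: you bound the Gaussian in $y$ by $1$ and rely solely on the factor $e^{-2\pi v\left[(n+n')-(r+r')^2/(8m)\right]}$. But $8m(n+n')-(r+r')^2=2D+2D'+(r-r')^2$ with $D=4mn-r^2$, $D'=4mn'-r'^2$, so for fixed $(D,D',r-r')$ this exponent is \emph{constant} while $r$ (hence $n,n'$) runs through infinitely many values, and the polynomial coefficients of $D^J_r$ grow with $n,n'$. Estimated your way, the series diverges; in particular the trivial bound $|c_{\phi_m}(n,r)|\ll e^{\varepsilon n}$ (your analogue of \eqref{trest}) can never be absorbed, because the exponent does not grow with $n+n'$. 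The needed decay in the $r+r'$ direction must be extracted from the Gaussian itself; it is the substitute for the Minkowski-reduction inequality $\tr(RY)\gg\tr(R)$ that powered the counting in \lemref{phib} and that is unavailable here (indeed, contrary to your description of the domain, on the Klingen fundamental domain $t$ is \emph{not} bounded below and $Y$ is not Minkowski reduced; this slip is harmless only because $t^{k+j}e^{-4\pi m t}$ is bounded on all of $(0,\infty)$). Second, and decisively, you never address uniformity in $m$, which is the entire content of the lemma: the implied constant must be independent of $m$, since \propref{fj-final} deduces $\{\phi_N,\psi_N\}=O(N^k)$ from exactly this. In your scheme every ingredient degrades with $m$: the decay rate after completing the square is $\asymp v/m$, the operators $D^J_1,D^J_2$ carry factors $m$ and $m^2$ through $R=-4\pi m$, and $\sup_{t>0}t^{k+j}e^{-4\pi m t}\asymp m^{-(k+j)}$. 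Without precise bookkeeping of these competing powers of $m$ --- which you do not attempt --- the argument yields at best a bound $O_{F,G,k}(m^A)$ with some $A>0$, not $O_{F,G,k}(1)$. The paper's subseries trick exists precisely to obtain this uniformity for free, which is why its proof, unlike yours, never has to re-estimate any Fourier series on the Jacobi side.
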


\begin{proof}
This follows from the proof of \propref{phib}. First we observe that the quantity in \eqref{phij} is
\[  \underset{S \geq 0  ,T \geq 0}\sum a_F\left( S\right)  \overline{a_G\left(  T\right) } \Phi_2 \left(  e^{2 \pi i (S-T)X} e^{- 2 \pi  (S+T)Y} \right), \]
where the right lower entries of $S$ and $T$ is $m$. This is a subseries of what is considered in \propref{phib}, and hence the proposition follows since we bounded the latter series absolutely.
\end{proof}

\section{The Eisenstein part of the Fourier-Jacobi coefficients of $E_{2,1}(f, \cdot)$} \label{Kleisen}
So far, we only dealt with Jacobi forms.
In subsequent sections we aim at properties of the modified Petersson product
for Jacobi forms
$\{\phi,\psi\}$
when $\phi=\phi_N$ and $\psi=\psi_N$ arise as Fourier-Jacobi coefficients
of Siegel modular forms $F$ and $G$ of degree $2$ and weight $k$. We start with the case of Klingen Eisenstein series $E_{2,1}(f, \cdot)$, abbreviated as $[f]$ for convenience.
 
The goal of this section is to express the Eisenstein part $\mathcal{E}_{k,m}$ of the $m$-th Fourier Jacobi coefficient $\phi_m$ of the Klingen Eisenstein series $[f]$ attached to a cuspform$f \in S_k$ in terms of Jacobi Eisenstein series at the cusp `$\infty$'; and eventually in the form $E_{k,1} \vert_{k,1} \mc T_m$ for some explicitly defined Hecke operator $\mc T_m$, where $E_{k,1} \in J_{k,1}$ denotes the Eisenstein series. 
We did not expect such a result beforehand, but it is crucial for our work.
These formulas would be used to compute and estimate the norm of $\mc E_{k,m}$ with respect to the extended inner product.

Throughout this section, let us write, following the notation in \cite{EZ}, $m=ab^2$ with $a$ square--free. Further let $E_{k,m,s}, \ (0 \leq s \leq b/2)$ be the Eisenstein series attached to the cusp parametrised by $s$, so that $E_{k,m}=E_{k,m,0}$. We now recall the formula for the `degenerate' Fourier coefficients (i.e., those $(n,r)$ for which $4nm=r^2$) for these Eisenstein series. Let 
\begin{align}
E_{k,m,s}(\tau,z) = \underset{ \substack {n \geq 0, r \in \mf Z \\ 4mn \geq r^2} }\sum  e_{k,m,s}(n,r) e(n \tau+rz).
\end{align}
Let us recall from \cite{EZ} that
\begin{prop} \label{ekms}
Let $n,r$ be such that $4mn=r^2$. Then one has $e_{k,m,s}(n,r) = \epsilon_{m}(r)$, where
\begin{align} \label{ekmseq}
\epsilon_{m,s}(r) = \begin{cases}  \tfrac{1}{2} \q \text{if} \ r \equiv \pm 2abs \pmod{2m} \text{ and } 2s \not \equiv 0 \bmod b, \\ 1 \q \text{if} \ r \equiv \pm 2abs \pmod{2m} \text{ and } 2s  \equiv 0 \bmod b,
\\ 0 \q \text{otherwise} . \end{cases}.
\end{align}
\end{prop}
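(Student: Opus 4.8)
The plan is to follow the strategy of Eichler--Zagier \cite{EZ} and reduce the assertion to a statement about the constant terms of the theta components of $E_{k,m,s}$. Recall from the theta decomposition \eqref{thetad} that, writing $E_{k,m,s}=\sum_{\mu\bmod 2m} h^{(s)}_{m,\mu}\theta_{m,\mu}$, the component $h^{(s)}_{m,\mu}(\tau)=\sum_{n\geq \mu^2/4m} e_{k,m,s}(n,\mu)\, e\bigl((n-\tfrac{\mu^2}{4m})\tau\bigr)$ is a modular form of weight $k-\tfrac12$. A degenerate Fourier coefficient $e_{k,m,s}(n,r)$ with $4mn=r^2$ is exactly the term $n=r^2/4m$ of $h^{(s)}_{m,r}$, i.e. it is the value of $h^{(s)}_{m,\mu}$ (with $\mu\equiv r$) at the cusp $i\infty$. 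In particular such a term can be nonzero only when $r^2/4m\in\mf Z$, i.e. $4m\mid r^2$; since $a$ is squarefree and $m=ab^2$, this forces $r\equiv \pm 2abs'\pmod{2m}$ for some $0\le s'\le b/2$. Thus the task is to compute, for each isotropic class $\mu$, the constant term at $i\infty$ of the corresponding theta component of $E_{k,m,s}$.

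Next I would identify the family $(h^{(s)}_{m,\mu})_\mu$ with a vector-valued Eisenstein series of weight $k-\tfrac12$ for the Weil representation attached to the finite quadratic module $(\mf Z/2m\mf Z,\ x\mapsto x^2/4m)$. Under this dictionary the isotropic classes are exactly the residues $\pm 2abs$, $0\le s\le b/2$, and these are in bijection with the cusps of the Jacobi group; by construction $E_{k,m,s}$ is the series normalized at the cusp $s$. Consequently its components have a nonzero constant term only for $\mu\equiv \pm 2abs\pmod{2m}$, while for isotropic $\mu$ attached to any other cusp $s'\ne s$ the constant term vanishes, producing the value $0$ in \eqref{ekmseq}. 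Equivalently, one reaches the same conclusion by the direct unfolding of the defining coset sum for $E_{k,m,s}$ and isolating the terms contributing to $D=4mn-r^2=0$; these come only from the ``trivial'' orbit sitting over the cusp $s$, which is the computation actually carried out in \cite{EZ}.

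It remains to explain the factor $\tfrac12$ versus $1$, which is the delicate point. The defining series is symmetric under $z\mapsto -z$, equivalently under $\mu\mapsto -\mu$ on theta components, so the unit mass attached to the cusp $s$ is distributed over the pair of classes $+2abs$ and $-2abs$. These two classes coincide modulo $2m$ precisely when $4abs\equiv 0\pmod{2m}$, i.e. when $2s\equiv 0\pmod b$; in that case the single class $\mu\equiv 2abs$ carries the full mass $1$, whereas otherwise the two classes are distinct and each carries $\tfrac12$. Collecting the three cases yields \eqref{ekmseq}. The main obstacle is the middle step: pinning down the correspondence between the cusp parameter $s$, the isotropic residues $\pm 2abs\pmod{2m}$, and the normalization of the vector-valued Eisenstein series, and thereby fixing the overall constant exactly (tracking both the averaging factor in the definition of $E_{k,m,s}$ and the $z\mapsto -z$ symmetry). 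Everything else is routine bookkeeping with residues modulo $2m$.
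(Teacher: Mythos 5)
The paper itself offers no proof of this proposition: it is recalled verbatim from Eichler--Zagier \cite{EZ} (\S 2, where the series $E_{k,m,s}$ are introduced), so the only meaningful comparison is with the computation in that source, and your sketch essentially reconstructs it. Your core steps are correct: a degenerate coefficient is the constant term of a theta component; $4m\mid r^2$ with $m=ab^2$, $a$ squarefree, forces $r\equiv 2abs'\pmod{2m}$; and the $\tfrac12$ versus $1$ dichotomy comes from the $\pm$ identification, since $+2abs\equiv-2abs\pmod{2m}$ exactly when $2s\equiv 0\bmod b$, played against the total mass $1$ carried by the orbit attached to $s$. That is precisely the mechanism behind \eqref{ekmseq}.

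Two places where your write-up stops short of a proof. First, the detour through vector-valued Eisenstein series for the Weil representation is scaffolding you never build: the asserted dictionary (components of $E_{k,m,s}$ form the vector-valued Eisenstein series attached to the isotropic class $\pm 2abs$, with prescribed constant terms) is exactly as hard as the statement being proved, and you concede it is the main obstacle. The ``direct unfolding'' you mention only in passing is not merely equivalent -- it is the actual, self-contained argument: with the definition $E_{k,m,s}=\tfrac12\sum_{\gamma\in\Gamma^J_\infty\backslash\Gamma^J} e(as^2\tau+2absz)\vert_{k,m}\gamma$ (where $-I\notin\Gamma^J_\infty$), the cosets with $c=0$ are represented by $(\pm 1,[\lambda,0])$ and carry the seed to $e\bigl(a(\pm s+b\lambda)^2\tau+2ab(\pm s+b\lambda)z\bigr)$, so each degenerate class $r\equiv 2abt\pmod{2m}$ receives coefficient $\tfrac12\bigl(\mathbf{1}[t\equiv s\bmod b]+\mathbf{1}[t\equiv -s\bmod b]\bigr)$, which is \eqref{ekmseq}. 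Second, one must still check that the cosets with $c\neq 0$ contribute nothing to coefficients with $4mn-r^2=0$; this is the standard Lipschitz-formula/contour-shift fact that those terms are supported on $4mn-r^2>0$, and it is the only analytic input. You assert it (``only from the trivial orbit'') without justification. Neither point is difficult, but as written the normalization -- which is the entire content of the proposition -- is asserted rather than derived.
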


\begin{prop} \label{ekm}
\begin{align}
\mathcal{E}_{k,m} = \underset{1 \leq s \leq b}\sum a_f(a \cdot (s,b)^2) E_{k,m,s}.
\end{align}
\end{prop}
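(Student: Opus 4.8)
The plan is to prove the identity by comparing the \emph{degenerate} Fourier coefficients of both sides, i.e. those indexed by $(n,r)$ with $4mn=r^2$, exploiting that in this range only the Eisenstein part is visible. Recall first that the $m$-th Fourier--Jacobi coefficient of $[f]=E_{2,1}(f,\cdot)$ has coefficients $c_{\phi_m}(n,r)=a\big([f],\begin{psm} n & r/2 \\ r/2 & m\end{psm}\big)$, so its degenerate coefficients are exactly the Fourier coefficients of the Klingen Eisenstein series at rank-one matrices.

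I would first reduce to a statement about degenerate coefficients. Writing $\phi_m=\mathcal{E}_{k,m}+\phi_m^\circ$ with $\phi_m^\circ\in J^{cusp}_{k,m}$, the cuspidal part is supported on $(n,r)$ with $4mn>r^2$ and so contributes nothing in the degenerate range; hence the degenerate coefficients of $\phi_m$ equal those of $\mathcal{E}_{k,m}$. By construction $\mathcal{E}_{k,m}$ lies in the Eisenstein subspace, spanned by $\{E_{k,m,s}\}_{0\le s\le b/2}$. By \propref{ekms} the degenerate coefficients of $E_{k,m,s}$, as a function of $r\bmod 2m$, are supported precisely on the residues $r\equiv\pm 2abs$, and for distinct $s,s'\in[0,b/2]$ these supports are disjoint (since $2abs\equiv\pm 2abs'\bmod 2m$ forces $b\mid s\mp s'$, impossible unless $s=s'$). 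Thus the basis elements have linearly independent degenerate parts, so the map sending an element of the Eisenstein subspace to its degenerate coefficients is injective. It therefore suffices to match the degenerate coefficients of the two sides.

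Next I would compute the degenerate coefficients of $\phi_m$, i.e. $a([f],T)$ at $T=\begin{psm} n & r/2 \\ r/2 & m\end{psm}$ with $4mn=r^2$ (rank one, since $m\ge 1$). Two standard inputs suffice: the defining property $\Phi([f])=f$ of the Siegel $\Phi$-operator gives $a\big([f],\begin{psm}\ell&0\\0&0\end{psm}\big)=a_f(\ell)$, and since $k$ is even, $a([f],T)=a([f],T[U])$ for every $U\in\gltwo$. As any rank-one $T\in\Lambda_2$ is $\gltwo$-equivalent to $\begin{psm}\min(T)&0\\0&0\end{psm}$, where $\min(T)$ is the least positive value of the integral form $x\mapsto T[x]$ on $\mf Z^2$, I obtain $a([f],T)=a_f(\min(T))$. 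Using $4mn=r^2$ one has $T[x]=(rx_1+2mx_2)^2/(4m)$, and minimising over $x$ with $rx_1+2mx_2\neq0$ gives $\min(T)=\gcd(r,2m)^2/(4m)$. For $r\equiv\pm 2abs\pmod{2m}$ this becomes $\gcd(r,2m)=2ab\,(s,b)$, hence $\min(T)=a\,(s,b)^2$, so the degenerate coefficient of $\phi_m$ at such $(n,r)$ is $a_f(a\,(s,b)^2)$. On the other side, by \propref{ekms} the degenerate coefficient of $\sum_{1\le s\le b}a_f(a(s,b)^2)E_{k,m,s}$ at $(n,r)$ is $\sum_s a_f(a(s,b)^2)\,\epsilon_{m,s}(r)$, which collapses to $a_f(a\,(s,b)^2)$ exactly on the class $r\equiv\pm 2abs$; comparing with the previous computation and invoking injectivity finishes the proof.

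I expect the main obstacle to be the bookkeeping of cusps: reconciling the summation range $1\le s\le b$ on the right-hand side with the cusps genuinely parametrised by $0\le s\le b/2$, and keeping track of the factors $\tfrac12$ versus $1$ in $\epsilon_{m,s}$ (the cases $2s\equiv0$ or $2s\not\equiv0\bmod b$) together with the $\pm$ ambiguity in $r\equiv\pm 2abs$, so that summing over $1\le s\le b$ reproduces each cusp contribution with the correct weight. One must also check that the residues $r\bmod 2m$ carrying a degenerate coefficient, equivalently those with $n=r^2/(4m)\in\mf Z_{\ge0}$, are exactly the classes $\pm 2abs$, leaving no coefficient unmatched. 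By contrast the analytic identification $a([f],T)=a_f(\min(T))$ is clean once $\Phi([f])=f$ and $\gltwo$-invariance are in hand.
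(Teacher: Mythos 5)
Your proposal is correct and follows essentially the same route as the paper: both arguments compare degenerate Fourier coefficients (where the cuspidal part vanishes and the $E_{k,m,s}$ are distinguished by \propref{ekms}), identify the degenerate coefficients of $\phi_m$ with rank-one Fourier coefficients of $[f]$, and use $\gltwo$-invariance together with $\Phi([f])=f$ to evaluate them as $a_f(a\,(s,b)^2)$. The only cosmetic difference is that you compute the invariant as $\min(T)=\gcd(r,2m)^2/(4m)$ for a general degenerate $r$, while the paper takes the representative $r=2abs$ and computes the content $\gcd(as^2,abs,ab^2)$ — the same quantity.
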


\begin{proof}
It is known from \cite{EZ} that when $k$ is even, the set $\{ E_{k,m,s} \}_{0 \leq s \leq b/2}$ is a basis for $J_{k,m}^{\mrm{Eis}}$ (see \cite{EZ}). Thus for scalars $q_s$, we can write
\begin{align}
\mc E:= \mathcal{E}_{k,m} = \underset{0 \leq s \leq b/2}\sum q_s E_{k,m,s} .
\end{align}

Since $k$ is even, it is clear from the description of the `degenerate' Fourier coefficients of $E_{k,m,s}$ that two such coefficients, say, $e_{k,m,s}(n,r)$ and $e_{k,m,s'}(n,r)$ are equal if and only if $s=s'$, for any $0\leq s,s' \leq b/2$. Thus one obtains for any $r \in \mf Z$ such that $r \equiv 2abs \pmod{2m}$:
\begin{align}
  q_s  \cdot \epsilon_{m,s}(r) =  c_{\mc{E}}(\frac{r^2}{4m} , r) .
\end{align}

Since we are only concerned with degenerate Fourier coefficients, its clear that
\begin{align} \label{e=phi}
c_{\mc E_{k,m}} =  c_{\phi_m} (n,r),
\end{align}
where $\phi_m = \mc E_{k,m} + \phi^\circ_m$; with $\phi^\circ_m \in J^{\mrm{cusp}}_{k,m}$.

On the other hand, choosing $r = 2abs$ gives
\begin{align*}
 c_{\phi_m}(\frac{r^2}{4m} , r) =  a_{[f]} \left( \begin{pmatrix}
as^2 & abs \\ abs & ab^2
\end{pmatrix} \right).
\end{align*}

Furthermore, we know that 
\begin{align} \label{Phif}
 f = \Phi ([f]) = \underset{n \geq 1}\sum a_{[f]} \left( \begin{pmatrix}
n & 0 \\ 0 & 0
\end{pmatrix} \right) q^n.
\end{align}

Next we observe that for some $\alpha \geq 1$,
\begin{align} \label{alphas}
\begin{pmatrix}
as^2 & abs \\ abs & ab^2
\end{pmatrix} = U' \begin{pmatrix}
\alpha & 0 \\ 0 & 0
\end{pmatrix}  U, 
\end{align}
for some $U \in \mrm{GL}_2(\mf Z)$. For $T= \begin{psm} n & r/2 \\ r/2 & m \end{psm} \in \Lambda^+_2$, let us define $c(T):= \mrm{gcd}(n,r,m)$. Since the action of $\mrm{GL}_2(\mf Z)$ preserves the content of a half-integral matrix, we get
\begin{align}
\alpha = \mrm{gcd} (as^2,abs, ab^2) = a \cdot (s,b)^2 .
\end{align}

Hence from \eqref{e=phi}, \eqref{Phif} and \eqref{alphas}, we get
\[  q_s \cdot \epsilon_{m}(2 abs) =  a_f(a \cdot (s,b)^2).  \]

Finally noting that $E_{k,m,s}=E_{k,m,-s}$, \eqref{ekmseq} and that $E_{k,m,s}$ depends only on $s \bmod b$, we easily obtain the statement of our proposition.
\end{proof}

We recall the Hecke operator $U_l$ ($l \geq 1$) defined on $J_{k,m}$ by
\[ \phi \mid U_l = \phi(\tau, lz), \]
which maps $J_{k,m}$ to $J_{k, ml^2}$. We next compute the image of the standard Eisenstein series $E_{k,m}$ under the operator $U_\ell$.
\begin{lem} \label{ekmu}
\begin{align}
 E_{k,m} \mid U_\ell = \underset{ \substack { 1 \leq s \leq b\ell  \\ b \mid s  } }\sum E_{k, m \ell^2,s}.
\end{align}
\end{lem}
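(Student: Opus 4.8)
The plan is to pin down $E_{k,m}\mid U_\ell$ inside $J_{k,m\ell^2}$ by the mechanism already used in \propref{ekm}: expand it in the Eisenstein basis and read off the coefficients from the degenerate Fourier coefficients, whose dependence on the cusp is made explicit in \propref{ekms}, and then repackage the result using $E_{k,m\ell^2,s}=E_{k,m\ell^2,-s}$ together with periodicity in $s\bmod b\ell$. Note first that $m\ell^2=a(b\ell)^2$ with $a$ squarefree, so for index $m\ell^2$ the cusp parameter runs over $0\le s\le b\ell/2$ and the relevant basis is $\{E_{k,m\ell^2,s}\}_{0\le s\le b\ell/2}$.

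First I would record the action of $U_\ell$ on Fourier expansions. Since $(\phi\mid U_\ell)(\tau,z)=\phi(\tau,\ell z)$, the coefficient of $e(N\tau+Rz)$ in $E_{k,m}\mid U_\ell$ equals $e_{k,m,0}(N,R/\ell)$ when $\ell\mid R$ and vanishes otherwise. If $(N,R)$ is degenerate for $m\ell^2$, i.e. $4m\ell^2N=R^2$, then $\ell\mid R$ forces $4mN=(R/\ell)^2$; so by \propref{ekms} with $s=0$ (where $e_{k,m,0}(n,r)=1$ exactly when $r\equiv 0\bmod 2m$) the degenerate coefficient of $E_{k,m}\mid U_\ell$ equals $1$ when $R\equiv 0\bmod 2m\ell$ and $0$ otherwise.

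Next I would check that $E_{k,m}\mid U_\ell$ is an Eisenstein series, so that it lies in the span of the $E_{k,m\ell^2,s}$ and can be compared coefficientwise. The most transparent argument runs through the theta decomposition \eqref{thetad}: a direct computation gives
\[ \theta_{m,\mu}(\tau,\ell z)=\sum_{t=0}^{\ell-1}\theta_{m\ell^2,\,\mu\ell+2m\ell t}(\tau,z), \]
so the theta components of $E_{k,m}\mid U_\ell$ are, after reindexing, either a theta component of $E_{k,m}$ (for those $\nu$ divisible by $\ell$) or $0$; as each of these is non-cuspidal, the vector-valued form attached to $E_{k,m}\mid U_\ell$ is again Eisenstein, whence $E_{k,m}\mid U_\ell\in J_{k,m\ell^2}^{\mrm{Eis}}$. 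Granting this, I write $E_{k,m}\mid U_\ell=\sum_{0\le s\le b\ell/2}q_s\,E_{k,m\ell^2,s}$ and evaluate the degenerate coefficient at the center $R=2ab\ell s_0$, $N=as_0^2$. Exactly as in \propref{ekm}, since $k$ is even the cusps are separated by \propref{ekms}, so only the $s=s_0$ term survives and $q_{s_0}\,\epsilon_{m\ell^2,s_0}(2ab\ell s_0)$ equals the degenerate coefficient computed above, namely $1$ if $b\mid s_0$ and $0$ otherwise. Hence $q_{s_0}=0$ unless $b\mid s_0$; and for $b\mid s_0$ the value of $\epsilon_{m\ell^2,s_0}$ there forces $q_{s_0}=1$ at the self-paired cusps ($2s_0\equiv 0\bmod b\ell$) and $q_{s_0}=2$ otherwise.

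Finally I would reassemble the stated sum. The index set $\{s:1\le s\le b\ell,\ b\mid s\}$ is $\{bj:1\le j\le\ell\}$; folding it into $[0,b\ell/2]$ through $s\sim -s\sim b\ell-s$, the value $j=\ell$ yields $s_0=0$ once, each pair $\{j,\ell-j\}$ with $0<bj<b\ell/2$ yields $s_0=bj$ twice, and (only when $\ell$ is even) $j=\ell/2$ yields $s_0=b\ell/2$ once. This multiplicity pattern is exactly the $q_{s_0}$ determined above, so $\sum_{1\le s\le b\ell,\ b\mid s}E_{k,m\ell^2,s}=\sum_{0\le s_0\le b\ell/2}q_{s_0}E_{k,m\ell^2,s_0}=E_{k,m}\mid U_\ell$, as claimed. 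The main obstacle is the middle step: matching degenerate coefficients cannot by itself detect a cuspidal part (cusp forms have none), so the essential point beyond bookkeeping is the verification that $E_{k,m}\mid U_\ell$ carries no cuspidal component, which the theta-component computation above supplies; alternatively one may invoke that $U_\ell$ preserves the Eisenstein subspace.
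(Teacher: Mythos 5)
Your proposal is correct and follows essentially the same route as the paper's proof: compute the degenerate Fourier coefficients of $E_{k,m}\mid U_\ell$, note that $U_\ell$ preserves the Eisenstein subspace, expand in the basis $\{E_{k,m\ell^2,s}\}_{0\le s\le b\ell/2}$, compare coefficients at $r=2ab\ell s$ as in \propref{ekm}, and fold the answer using $E_{k,m\ell^2,s}=E_{k,m\ell^2,-s}$ and periodicity in $s$. The only differences are cosmetic: you justify the Eisenstein-preservation step by an explicit theta-decomposition identity (where the paper simply cites an unfolding argument and \cite[p.~26]{EZ}, which you also mention as a fallback), and you track the self-paired cusps $s_0\in\{0,b\ell/2\}$ (coefficient $1$ versus $2$) slightly more carefully than the paper's statement ``$t_s$ equals $2$ if $b\mid s$''.
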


\begin{proof}
If $e'(n,r)$ denote the Fourier coefficients of $E_{k,m} \mid U_\ell $, then we know (see e.g., \cite{EZ}) that $e'(n,r) =  e_{k,m}(n, r/\ell) $. Thus $e'(n,r)$ equals $1$ if  $2m \mid r/\ell$ and $0$ otherwise, provided that $(n,r)$ is `degenerate' with respect to the index $m \ell^2$.

It is easy to see (e.g. by an unfolding argument or simply by using the remarks in \cite[p.~26]{EZ}) that $U_\ell$ maps $J^{\mrm{Eis}}_{k,m }$ to $ J^{\mrm{Eis}}_{k,m \ell^2}$. Since $k$ is even, let us write
\[ E_{k,m} \mid U_\ell = \underset{0 \leq s \leq b \ell/2}\sum t_s E_{k,m\ell^2,s}, \]
and argue as in the proof of \propref{ekm}, i.e., we compare the $(r^2/4m\ell^2, r)$ coefficients and choose $r=2ab\ell s$. Then $2m \mid r/\ell$is equivalent to $b \mid s$ and we infer that $t_s$ equals $2$ if $b$ divides $s$ and $0$ otherwise. The proof follows, again, by using the facts that $E_{k,m,s}=E_{k,m,-s}$, \eqref{ekmseq} and that $E_{k,m,s}$ depends only on $s \bmod b \ell$.
\end{proof}

Following the notation in \cite{haya}, let us put
\begin{align} \label{gkm}
g_k(m) = \sum_{y^2 \mid m} \mu(y) \sigma_{k-1}(m/y^2).
\end{align}
By multiplicativity, one checks easily that $g_k(m)=m^{k-1} \underset{p \mid m}\prod (1+p^{-k+1}) $. Let us now recall the following relation between the Jacobi Eisensein series $E_{k,m}$ and $E_{k,1}$ (see \cite{EZ}).
\begin{align} \label{ekmek1}
E_{k,m} = g_k(m)^{-1} \underset{t^2 \mid m}\sum \mu(t) E_{k,1} \mid U_{t} \circ V_{\frac{m}{t^2}}.
\end{align}

The operator $V_N$ would be defined and discussed in section~\ref{VN}, we do not need it here. By combining \propref{ekmek1} with the next theorem, we are going to give a formula for $\mathcal{E}_{k,m}$, which upon using \eqref{ekmek1}, ultimately shows that these objects are determined by $E_{k,1}$ acted upon by certain Hecke operators $\mc T_m$, i.e., $\mc E_{k,m} = E_{k,1} \vert_{k,m} \mc T_m$. This would be useful later.

\begin{thm} \label{ekmfor} Keeping the notation introduced in this section, one has
\begin{align} \label{ekmfor1}
\mathcal{E}_{k,m} =  \underset{ \substack{ 1 \leq \lambda \leq b \\ \lambda \mid b }   }\sum a_f( a \lambda^2 ) \underset{ \substack{1 \leq d \leq b \\ d \lambda \mid b  } }\sum \mu(d) E_{k, a\lambda^2 d^2} \mid U_{\frac{b}{\lambda d}}.
\end{align}
\end{thm}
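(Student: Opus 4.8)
The plan is to combine the formula for $\mathcal{E}_{k,m}$ from \propref{ekm} with the explicit relation \eqref{ekmek1} expressing $E_{k,m,s}$-type Eisenstein series through $E_{k,1}$ twisted by $U_t$ and $V_{m/t^2}$, and then reorganize the resulting double sum. The key bridge is \lemref{ekmu}, which computes $E_{k,m} \mid U_\ell$ as a sum of the cusp-Eisenstein series $E_{k,m\ell^2,s}$ over $s$ divisible by $b$. The idea is to invert this: since $\{E_{k,m,s}\}$ form a basis of $J_{k,m}^{\mrm{Eis}}$, the operators $U_\ell$ give enough relations to re-express each individual $E_{k,m,s}$ (appearing in \propref{ekm}) as a linear combination of the \emph{standard} Eisenstein series $E_{k,a\lambda^2 d^2}$ acted on by $U$'s. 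So concretely, I would start from $\mathcal{E}_{k,m} = \sum_{1 \leq s \leq b} a_f(a(s,b)^2) E_{k,m,s}$ and seek to invert the system $E_{k,m} \mid U_\ell = \sum_{b \mid s,\, 1 \leq s \leq b\ell} E_{k,m\ell^2,s}$ via Möbius inversion.

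The natural variable change is to set $\lambda = (s,b)$, grouping the terms in \propref{ekm} by the value of this gcd; this isolates the coefficient $a_f(a\lambda^2)$ which matches the outer sum $\sum_{\lambda \mid b} a_f(a\lambda^2)$ in the target \eqref{ekmfor1}. For each fixed $\lambda$, one must express the partial sum $\sum_{s:\,(s,b)=\lambda} E_{k,m,s}$ in terms of the $E_{k,a\lambda^2 d^2}\mid U_{b/(\lambda d)}$. Here \lemref{ekmu} applied with $m$ replaced by $a\lambda^2 d^2$ and $\ell = b/(\lambda d)$ produces $\sum_{d \mid \cdots} (\text{index } a\lambda^2 d^2 \ell^2 = ab^2 = m)$ precisely the right index $m$, which is the consistency check that makes the formula plausible: every $E_{k,a\lambda^2 d^2}\mid U_{b/(\lambda d)}$ lands in $J_{k,m}^{\mrm{Eis}}$. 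The Möbius factor $\mu(d)$ then enters exactly as in the sieve $\sum_{s:(s,b)=\lambda} = \sum_{\lambda \mid s} \sum_{d \mid (s/\lambda, \cdots)} \mu(d)$, i.e., a Möbius inversion detecting the coprimality condition $(s/\lambda, b/\lambda) = 1$.

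I would carry this out by a direct coefficient comparison on the degenerate Fourier coefficients, mirroring the proofs of \propref{ekm} and \lemref{ekmu}: expand both sides of \eqref{ekmfor1} in the basis $\{E_{k,m,s}\}_{0 \leq s \leq b/2}$, compute the coefficient of each basis element using \propref{ekms} and \lemref{ekmu}, and verify equality. On the right-hand side, the coefficient of $E_{k,m,s}$ with $(s,b) = \lambda$ receives contributions only from those $d$ with $\lambda d \mid b$ and an appropriate divisibility governing when $E_{k,a\lambda^2d^2}\mid U_{b/(\lambda d)}$ contains $E_{k,m,s}$; the $\mu(d)$-weighted sum over these $d$ should collapse to $a_f(a\lambda^2)$ by the standard identity $\sum_{d \mid n} \mu(d) = \delta_{n,1}$.

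The main obstacle I expect is the bookkeeping of exactly which $E_{k,m,s}$ appears in each $E_{k,a\lambda^2 d^2}\mid U_{b/(\lambda d)}$ and with what multiplicity, since \lemref{ekmu} is stated for the image under a single $U_\ell$ and the identification of cusps across different indices (the parameter $s$ runs over $0 \leq s \leq b\ell/2$ and depends on $s \bmod b$) must be tracked carefully through the relations $E_{k,m,s} = E_{k,m,-s}$ and the periodicity in $s$. Reconciling the ranges of summation $1 \leq s \leq b$ in \propref{ekm} versus the doubly-indexed ranges in \eqref{ekmfor1}, together with the factor-of-$2$ subtleties from $\epsilon_{m,s}$ at the cusps where $2s \equiv 0 \bmod b$, will require the same delicate handling seen in the earlier proofs, so I would reduce everything to identities among the degenerate coefficients where these factors are explicit and then conclude by linear independence of the $E_{k,m,s}$.
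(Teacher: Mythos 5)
Your proposal follows essentially the same route as the paper's own proof: start from \propref{ekm}, group the sum by $\lambda = (s,b)$ to isolate $a_f(a\lambda^2)$, detect the coprimality $(s/\lambda, b/\lambda)=1$ by a M\"obius sieve giving the factor $\mu(d)$, and then recognize each inner sum $\sum_{s''} E_{k,m,\lambda d s''}$ as $E_{k,a\lambda^2 d^2}\mid U_{b/(\lambda d)}$ via \lemref{ekmu} applied with index $a\lambda^2 d^2$ and $\ell = b/(\lambda d)$. The paper executes this as a direct forward computation rather than a coefficient comparison in the basis $\{E_{k,m,s}\}$, but the substance is identical, so your argument is correct.
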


\begin{proof}
We start from the expression of $\mathcal{E}_{k,m} $ from \propref{ekm}:
\begin{align} \label{ekm1}
\mathcal{E}_{k,m} = \underset{1 \leq s \leq b}\sum a_f(a \cdot (s,b)^2) E_{k,m,s}.
\end{align}
Let $\lambda = (s,b)$ and write $s = \lambda s', b = \lambda b'$. We can then rewrite \eqref{ekm1} in the following way, by summing over fixed values of $\lambda$:
\begin{align} \label{ekm2}
\mathcal{E}_{k,m} &= \underset{  \lambda \mid b   }\sum     a_f(a \lambda^2) \underset{  (s,b)=\lambda }\sum  E_{k,m,s}\\
&  = \underset{   \lambda \mid b    }\sum     a_f(a \lambda^2) \underset{  (s',b')=1 }\sum  E_{k,m, \lambda s'}.
\end{align}

Removing the coprimality condition by using the $\mu(\cdot)$ function, the second summand in the above can be rewritten as
\begin{align}  
& \underset{ 1 \leq s' \leq b'}\sum \q \underset{d \mid (s',b')}\sum \mu(d) E_{k,m,\lambda s'} \\
&=  \underset{  d \mid b'  }\sum  \mu(d)   \underset{ \substack{ 1 \leq s' \leq b' \\ d \mid s'} }\sum E_{k,m,\lambda s'}\\
& = \underset{  d \mid b'  }\sum  \mu(d)   \underset{ 1 \leq s'' \leq b''  }\sum E_{k,m,\lambda d s''},
\end{align}
where we have put $s' = d s'', b' = d b''$.

By \lemref{ekmu}, we see that the second sum in the above is just
\[  E_{k,a \lambda^2 d^2} \vert U_{b''}, \]
and this immediately gives the theorem.
\end{proof}

\subsubsection{Expression in terms of Fourier Jacobi coefficients of Siegel Eisenstein series}
Recall the function $g_k(m)$ from \eqref{gkm}. For $f \in M^1_k$, define the functions
\begin{align} \label{alphadef}
g_f(m)= \underset{d^2 \mid m}\sum \mu(d) a_f(m/d^2); \q   \alpha_m(t;f) = \underset{\ell \mid t}\sum \mu(\frac{t}{l}) \frac{g_f(m/\ell^2) }{g_k( m/\ell^2) }.
\end{align}
Let $e_{k,m}$ denote the Fourier Jacobi coefficients of the Siegel Eisenstein series $E^k_{2,0}(Z)$, so that
\begin{align} \label{siegel-fj}
E^k_{2,0}(Z) = \ut{m \geq 0}\sum  e_{k,m}(\tau,z) e(m \tau').
\end{align}

We can now give an expression for $\mc E_{k,m}$ in terms of $e_{k,m}$, which is useful in certain circumstances, e.g., while dealing with asymptotics of Fourier coefficients.
\begin{lem} \label{E2e}
$\mc E_{k,m} = \underset{t^2 \mid m}\sum  g_f(m/t^2) E_{k,m/t^2} \vert U_t.$
\end{lem}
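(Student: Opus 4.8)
The goal is to express $\mc E_{k,m} = \sum_{t^2 \mid m} g_f(m/t^2) E_{k,m/t^2} \vert U_t$, where $g_f$ is as in \eqref{alphadef}. The plan is to start from \thmref{ekmfor}, which already expresses $\mc E_{k,m}$ in terms of the operators $E_{k,a\lambda^2 d^2} \mid U_{b/(\lambda d)}$, and to regroup the double sum there so that the index of each Jacobi Eisenstein series becomes of the form $m/t^2$. Writing $m = ab^2$ with $a$ squarefree, the summation in \eqref{ekmfor1} runs over $\lambda \mid b$ and $d$ with $d\lambda \mid b$. The natural reparametrisation is to introduce $t := b/(\lambda d)$ as the new $U$-operator index, so that $E_{k,a\lambda^2 d^2} \mid U_{b/(\lambda d)} = E_{k, m/t^2}\mid U_t$, since $a\lambda^2 d^2 \cdot (b/(\lambda d))^2 = a b^2 = m$ confirms the index identity $a\lambda^2 d^2 = m/t^2$.

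After this substitution, I would collect all terms in \eqref{ekmfor1} that produce the same operator $E_{k,m/t^2}\mid U_t$, i.e., all $(\lambda,d)$ with fixed product $\lambda d = b/t$. The coefficient of $E_{k,m/t^2}\mid U_t$ then becomes $\sum_{\lambda d = b/t,\ \lambda \mid b} a_f(a\lambda^2)\mu(d)$. The key step is to recognise this inner sum as exactly $g_f(m/t^2)$. Indeed, $m/t^2 = a\lambda^2 d^2$ with $a$ squarefree, so the squarefull-part structure gives $g_f(m/t^2) = \sum_{e^2 \mid a\lambda^2 d^2}\mu(e)a_f(a\lambda^2 d^2/e^2)$ by the definition in \eqref{alphadef}; since $a$ is squarefree, every square divisor $e^2$ of $a\lambda^2 d^2$ satisfies $e \mid \lambda d$, and writing $\lambda d / e$ in place of $\lambda$ matches $e \leftrightarrow d$ in the previous expression. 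This Möbius bookkeeping is the crux of the argument.

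The main obstacle I anticipate is handling the divisibility constraints carefully: in \thmref{ekmfor} the variable $\lambda$ is restricted by $\lambda \mid b$, not merely $\lambda \mid (b/t)$, so when I fix the product $\lambda d = b/t$ I must verify that the constraint $\lambda \mid b$ is automatic (it is, since $\lambda \mid \lambda d = b/t \mid b$) and that no terms are lost or double-counted. I would also check that the squarefreeness of $a$ is used correctly to ensure the bijection between square divisors $e^2 \mid m/t^2$ and divisors $e \mid \lambda d$ is exact. Once the inner coefficient is matched to $g_f(m/t^2)$ via the definition \eqref{alphadef}, the lemma follows directly; this is a purely combinatorial reorganisation of the formula already established in \thmref{ekmfor}, with no new analytic input required.
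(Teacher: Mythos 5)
Your proposal is correct and follows essentially the same route as the paper: both start from \thmref{ekmfor}, regroup the double sum by the product $\lambda d$ (equivalently by the $U$-operator index $t = b/(\lambda d)$), and identify the resulting inner coefficient $\sum_{d \mid b/t} \mu(d) a_f\bigl(a(b/t)^2/d^2\bigr)$ with $g_f(m/t^2)$ using the squarefreeness of $a$. The only point stated slightly more explicitly in the paper is the final conversion of the outer range, namely that $t \mid b$ is equivalent to $t^2 \mid m$, which your squarefreeness argument already covers implicitly.
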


\begin{proof}
We start from \eqref{ekmfor1} and make a change of variable $\lambda d=y$ and interchange the order of summations therein to obtain (recalling that $m=ab^2$)
\[  \mc E_{k,m} = \ut{x \mid b}\sum \, \, \ut{d \mid b/x} \sum \mu(d) a_f( \frac{m}{d^2 x^2} ) E_{k,m /x^2} \vert U_x.\]
Since $x \mid b$ is equivalent to $x^2 \mid m$, we immediately get the lemma.
\end{proof}

\begin{prop} \label{e2E}
$\mc E_{k,m} =  c_k^{-1} \underset{t^2 \mid m}\sum \alpha_m(t) e_{k,m/t^2} \vert U_t$.
\end{prop}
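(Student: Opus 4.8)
The plan is to verify the identity by substituting the right-hand side and reducing it to \lemref{E2e}, which already gives $\mc E_{k,m}=\sum_{u^2\mid m} g_f(m/u^2)\,E_{k,m/u^2}\vert U_u$. The single external input I would use is the known expansion (in the notation of \cite{haya}) of the Fourier--Jacobi coefficient $e_{k,n}$ of the Siegel Eisenstein series \eqref{siegel-fj} in the Eisenstein basis of $J_{k,n}$, namely
\[ e_{k,n}=c_k\sum_{s^2\mid n} g_k(n/s^2)\,E_{k,n/s^2}\vert U_s, \]
where $c_k$ is the constant of the statement. Everything else is the Möbius bookkeeping that is built into the definition of $\alpha_m(t;f)$ in \eqref{alphadef}.

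First I would insert this expansion into the proposed right-hand side $c_k^{-1}\sum_{t^2\mid m}\alpha_m(t)\,e_{k,m/t^2}\vert U_t$. The constants $c_k$ cancel, and since the operator $\phi\mapsto\phi(\tau,lz)$ satisfies $(\,\cdot\,\vert U_s)\vert U_t=\,\cdot\,\vert U_{st}$ (both steps merely rescale $z$), the resulting double sum reads
\[ \sum_{t^2\mid m}\ \sum_{s^2\mid (m/t^2)}\alpha_m(t)\,g_k\!\left(m/(st)^2\right)E_{k,m/(st)^2}\vert U_{st}. \]
Reindexing by $u=st$ (so that $u^2\mid m$, and for fixed $u$ the pair $(t,s)$ ranges over $t\mid u$, $s=u/t$) collapses the inner structure to
\[ \sum_{u^2\mid m}\Big(\sum_{t\mid u}\alpha_m(t)\Big)g_k(m/u^2)\,E_{k,m/u^2}\vert U_u. \]

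The decisive step is to evaluate the inner sum $\sum_{t\mid u}\alpha_m(t)$. Setting $F(\ell):=g_f(m/\ell^2)/g_k(m/\ell^2)$, the definition \eqref{alphadef} says $\alpha_m(t)=\sum_{\ell\mid t}\mu(t/\ell)F(\ell)$, so by Möbius inversion $\sum_{t\mid u}\alpha_m(t)=F(u)=g_f(m/u^2)/g_k(m/u^2)$. The factor $g_k(m/u^2)$ then cancels, leaving exactly $\sum_{u^2\mid m}g_f(m/u^2)\,E_{k,m/u^2}\vert U_u=\mc E_{k,m}$ by \lemref{E2e}, which is the assertion. I expect the real obstacle to lie not in this algebra but in correctly identifying the normalised form of the $e_{k,n}$-expansion above together with the precise constant $c_k$: it is exactly the weights $g_k(n/s^2)$ there that force the telescoping Möbius sum to close up against the $g_k$ in the denominator of $\alpha_m$.
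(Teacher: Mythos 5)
Your proposal is correct and takes essentially the same route as the paper: both arguments rest on exactly the same two inputs, namely the expansion $e_{k,n}=c_k\sum_{s^2\mid n}g_k(n/s^2)\,E_{k,n/s^2}\vert U_s$ from \cite{haya} (with $c_k=2/\zeta(1-k)$) and \lemref{E2e}, combined with M\"obius bookkeeping. The only difference is the direction of the computation: the paper first inverts the Hayashida relation to write $E_{k,m}$ in terms of the $e_{k,n}$'s (a step it leaves as a direct calculation) and then substitutes into \lemref{E2e}, whereas you substitute the Hayashida expansion into the claimed right-hand side and M\"obius-invert the definition of $\alpha_m$ via $\sum_{t\mid u}\alpha_m(t)=g_f(m/u^2)/g_k(m/u^2)$ --- the same telescoping run in reverse.
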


\begin{proof}
We first recall a result 
%due to S. B\"ocherer 
from \cite{BFJ} about the Fourier Jacobi coefficients of the Siegel Eisenstein series of degree $2$, stated in a simple form as in a paper by Hayashida \cite{haya}:
\[  e_{k,m} = c_k \ut{d^2 \mid m}\sum  g_k(m/d^2) E_{k,m/d^2} \vert U_d,\]
where $c_k= 2/\zeta(1-k)$.

Let us invert the above relation to express $E_{k,m}$ in terms of $e_{k,m}$; we omit the proof, which can be checked easily by a direct calculation. We get
\[  E_{k,m} = \frac{c^{-1}_k}{g_k(m)} \ut{d^2 \mid m}\sum  \mu(d) e_{k,m/d^2} \vert U_d. \]
The proposition then follows immediately from \lemref{E2e}.
\end{proof}

\section{Interlude on adjoints of some Hecke operators and bounds on eigenvalues} \label{adj-h}

\subsection{Adjoint of $U_l$}
In the subsection we proceed to compute the adjoint $U^*_l$ of the $U_l$ operator with respect to the (extended) Petersson inner product. We provide the details for the convenience of the reader, since these may not be available in the literature explicitly. We start from the definition of $U_l$ to write, for $\phi \in J_{k,m}, \psi \in J_{k,ml^2}$, 
\[  \phi \mid U_l =  \phi \mid_{k,m} \left( \begin{psm}  l^{-1} & 0 \\0 & l^{-1} \end{psm} , [0,0] \right).\]
(Here we use the action of $\mrm{GL}^+_2(\mf Q)$ on functions in the usual way, i.e. with $\alpha= ( \left( \begin{smallmatrix} a & b \\ c &d  \end{smallmatrix} \right), [0,0])$, one puts $\phi \mapsto \phi \mid_{k,m} \alpha = \det(\alpha)^{k/2} (c\tau+d)^{-k} e \left( - m c z^2/(c \tau+d) \right) \phi \left((\alpha(\tau), \frac{z}{c \tau+d} \right)$ and checks that this is a group action.) For $\phi,\psi$ are on a subgroup $G$ of finite index in $\Gamma^J$ of weight $k$ and index $m$, we define the Petersson inner product (see also \cite{KS})
\begin{equation} \label{petersson}
\langle \phi , \psi\rangle := \frac{1}{[\Gamma^J \colon G]} \int_{F_G} \phi(\tau,z) \overline{\psi(\tau,z)} e^{- 4 \pi m y^2/v} v^k dV, 
\end{equation} 
whenever the integral converges, and $dV$ being the invariant volume element on $\mf H_1 \times \mf C$ and $F_G$ denotes a fundamental domain for $G$.

\begin{lem} \label{adjoint}
For cusp forms $\phi, \psi$ on $G$ as above and $\eta = \left( \begin{psm}  l^{-1} & 0 \\0 & l^{-1} \end{psm} , [0,0] \right) \in \mrm{GL}^+_2(\mf Q) \ltimes \mf Q^2 $, one has the formula 
\begin{align}
 \langle  \phi \mid \eta, \psi  \rangle = \langle  \phi , \psi \mid \eta^{-1} \rangle,
\end{align}  
and the same formula holds for any pair of Jacobi forms on $G$ w.r.t. the extended inner product $\{  \,  , \}$.
\end{lem}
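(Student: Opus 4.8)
The plan is to prove the adjoint relation by a direct change of variables in the defining integral \eqref{petersson}, exploiting the invariance of the measure $dV=\frac{du\,dx\,dv\,dy}{v^3}$ under $\mrm{GL}_2^+(\mf R)\ltimes\mf R^2$, and then to explain why the same computation survives the insertion of the invariant operator $\mc D^J$. First I would record the two actions explicitly: writing $\eta=\left(\begin{psm} l^{-1} & 0 \\ 0 & l^{-1}\end{psm},[0,0]\right)$, a direct computation from the stated action of $\mrm{GL}_2^+(\mf Q)\ltimes\mf Q^2$ gives $(\phi\mid\eta)(\tau,z)=\phi(\tau,lz)$ (this is just $U_l$) and $(\psi\mid\eta^{-1})(\tau,z)=\psi(\tau,z/l)$, the determinant factor $\det(\cdot)^{k/2}$ cancelling against $(c\tau+d)^{-k}$ in each case. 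Note that $\phi\mid\eta$ and $\psi$ both have index $ml^2$, while $\phi$ and $\psi\mid\eta^{-1}$ both have index $m$, so both sides of the asserted identity are genuine inner products of forms of equal index.

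Next I would substitute $z=w/l$, i.e.\ $x=s/l$, $y=t/l$ with $w=s+it$, in the integral defining $\langle\phi\mid\eta,\psi\rangle$. The integrand $\phi(\tau,lz)\overline{\psi(\tau,z)}$ becomes $\phi(\tau,w)\overline{\psi(\tau,w/l)}$, which is precisely the integrand of $\langle\phi,\psi\mid\eta^{-1}\rangle$; the Gaussian factor transforms as $e^{-4\pi ml^2 y^2/v}=e^{-4\pi m t^2/v}$, matching the index-$m$ weight; and the measure picks up a Jacobian $dx\,dy=l^{-2}\,ds\,dt$. Thus, up to the factor $l^{-2}$ together with the bookkeeping of fundamental domains and the normalizing constants, the two integrands coincide pointwise. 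To finish I would choose compatible subgroups: take $G$ of finite index in $\Gamma^J$ on which the index-$m$ forms transform, and $G'=\eta\,G\,\eta^{-1}\cap\Gamma^J$ for the index-$ml^2$ side, so that the map $\eta^{-1}\langle\cdot\rangle$ carries a fundamental domain $F_{G'}$ onto $F_G$.

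The main obstacle is exactly this last bookkeeping step: verifying that the index change $m\mapsto ml^2$, the change of covolume of the fundamental domain, and the Jacobian $l^{-2}$ all conspire so that the two \emph{normalized} integrals in \eqref{petersson} are equal rather than merely proportional. Conceptually the cleanest packaging is to first establish the invariance $\langle F\mid\beta,G\mid\beta\rangle=\langle F,G\rangle$ for $\beta\in\mrm{GL}_2^+(\mf Q)\ltimes\mf Q^2$ (which is what the change of variables above proves) and then deduce the two-sided statement from the fact that $\mid$ is a group action: $\langle\phi\mid\eta,\psi\rangle=\langle(\phi\mid\eta)\mid\eta^{-1},\psi\mid\eta^{-1}\rangle=\langle\phi,\psi\mid\eta^{-1}\rangle$. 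The cuspidality of $\phi,\psi$ is used only to guarantee convergence of all integrals involved.

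Finally, for the extended product $\{\,,\}$ the identical change of variables applies. The operator $\mc D^J=L_1+r$ is the same differential operator for every index (the index enters only through the function to which it is applied), and, crucially, it is invariant under the scaling $z\mapsto lz$: in the explicit form of $L_1$ the factors $(z_2-\bar z_2)$ scale by $l^{-1}$ while $\partial_{z_2}$ scales by $l$, so each summand $(z_2-\bar z_2)^2\di\dib$ and $(z_1-\bar z_1)(z_2-\bar z_2)(\bar\partial_1\di+\partial_1\dib)$ is unchanged. Hence $\mc D^J$ commutes with the substitution $z=w/l$, so $\mc D^J$ applied to the density transforms in exactly the same way as the density itself, and the argument above goes through verbatim with $\phi\overline{\psi}v^k e^{-4\pi m y^2/v}$ replaced by $\mc D^J\!\left(\phi\overline{\psi}v^k e^{-4\pi m y^2/v}\right)$, yielding the formula for $\{\,,\}$ for arbitrary Jacobi forms on $G$.
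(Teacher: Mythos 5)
Your proposal is correct and follows essentially the same route as the paper: a change of variables $z \mapsto lz$ with Jacobian $l^{-2}$, identification of the image of the $\Gamma^J$-fundamental domain as a fundamental domain for $\sltwo \ltimes l\mf Z^2$, and cancellation of that Jacobian against the normalization $[\Gamma^J : G]^{-1} = l^{-2}$ in \eqref{petersson}; for the extended product the paper likewise just re-runs the same substitution, and your explicit check that $L_1$ commutes with the scaling is a reasonable elaboration of the paper's brief remark. One small correction: your opening appeal to invariance of $dV$ under $\mrm{GL}_2^+(\mf Q)\ltimes\mf Q^2$ is false (the measure is invariant only under $\sltwor\ltimes\mf R^2$, and its non-invariance under the scaling is precisely the Jacobian $l^{-2}$ you then compute), but your actual argument never uses that claim.
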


\begin{proof}
Let $\phi \in J_{k,m}^{cusp}, \psi \in J_{k,ml^2}^{cusp}$. From the definition \eqref{petersson} it follows, with $F$ a fundamental domain for $\Gamma^J$ that
\begin{align*}
\langle \phi \mid \eta , \psi\rangle &=  \int_{F} \phi(\tau,lz) \overline{\psi(\tau,z)} e^{- 4 \pi m l^2 y^2/v} v^k dV\\
&= l^{-2}  \int_{F'} \phi(\tau,z) \overline{\psi(\tau,z/l)} e^{- 4 \pi m y^2/v} v^k dV\\
&=l^{-2}  \int_{F'} \phi(\tau,z) \overline{\psi(\tau,z) \mid \eta^{-1}} e^{- 4 \pi m y^2/v} v^k dV,
\end{align*} 
where $F'$ can be taken as $\cup_{\tau \in \mathscr F} D'_\tau$ ($\mathscr F = \sltwo \backslash \mf H_1$) such that $D'_\tau := \{ (\tau, p+q\tau \mid p,q \in [0,l] \}$; with the choice of $F$ as $\cup_{\tau \in \mathscr F} D_\tau$ such that $D'_\tau := \{ (\tau, p+q\tau \mid p,q \in [0,1] \} $.

It is readily checked that $\psi \mid \eta^{-1}$ is a Jacobi form on $G:=\sltwo \ltimes l \mf Z^2$ of weight $k$ and index $m$, that a fundamental domain for the group $G$ can be taken to be $F'$; and this completes the proof since $[\Gamma^J \colon  G]=l^2$.

For the second assertion, one notes that the above proof for cusp forms goes through, as our extended Petersson product is also defined by integration over a fundamental domain of $\Gamma^J$ on $\mf H_1 \times \mf C$ of a $\Gamma^J$-invariant function. The difference, if any, is only in notation.
\end{proof}

\begin{rmk}
We point out that the same formula should hold in general, i.e., for all $\eta \in \mrm{GL}^+_2(\mf Q) \ltimes \mf Q^2 $ (also see \cite{KS} for a statement for $\mrm{SL}(2, \mf Q)$; our case does not follow from this).
\end{rmk}

Let us now proceed towards a formula for the operator $U^*_l$ w.r.t. $\{  \, ,\}$. Put $A_l:=\left( \begin{psm}  l & 0 \\0 & l \end{psm} , [0,0] \right)$. Using the lemma we can therefore write
\begin{align}\label{uladj}
\{ \phi \mid U_l, \psi \} =  \{ \phi, \psi \mid \left( \begin{psm}  l & 0 \\0 & l \end{psm} , [0,0] \right) \}.
\end{align}
Since $\psi \mid A_l$ is a Jacobi form on $\sltwo \ltimes l \mf Z^2$ of weight $k$ and index $m$ (cf. proof of Lemma~\ref{adjoint}, with $\eta^{-1}=A_l$), we can now sum both sides of \eqref{uladj} over any set of representatives $(\lambda, \mu)$ of $(\mf Z/ l \mf Z)^2$ to get
\begin{align}
\{ \ ( \sum_{(\lambda,\mu)}  \phi \mid_{k,m} [\lambda,\mu] )\ \mid U_l  , \psi \} &=  \{ \phi, \sum_{(\lambda,\mu)} \psi \mid A_l \mid_{k,m} [-\lambda,-\mu]  \}
\end{align}
so that, 
\begin{align}
l^2 \{ \phi \mid U_l , \psi \} =  \{ \phi , \sum_{(\lambda,\mu)} \psi \mid_{k,m}   \ \left(  \begin{psm}  l & 0 \\0 & l \end{psm}, [\lambda,\mu]  \right) \ \}.
\end{align}

It is then formal to check that $\sum_{(\lambda,\mu) }   \psi \mid_{k,m}  \left(  \begin{psm}  l & 0 \\0 & l \end{psm}, [\lambda,\mu]  \right)$ is in $J_{k,m}$. Thus $U_l^* \colon J_{k,ml^2} \rightarrow J_{k,m}$ is given by
\begin{align}
\psi \mapsto l^{-2} \sum_{(\lambda,\mu) \in (\mf Z/ l \mf Z)^2}  \psi \mid_{k,m} \left(  \begin{psm}  l & 0 \\0 & l \end{psm}, [\lambda,\mu]  \right).
\end{align}
The Fourier expansion of $\psi \mid U_l^*$ can then be computed to be  as follows.

\begin{lem} \label{ul*lem}
\begin{align}
\psi \mid U_l^* (\tau,z) = l^{-1} \underset{(D,r)}\sum \left(  \underset{ \substack{ r' \bmod{2ml} \\ r' \equiv r \bmod{2m} } }\sum c_\psi(l^2D,lr') \right) e\left( \frac{r^2-D}{4m}\tau + rz \right)  .
\end{align} \label{ul*}
\end{lem}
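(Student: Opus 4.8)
The plan is to compute the Fourier expansion of $\psi \mid U_l^*$ directly from the explicit formula
\[ \psi \mid U_l^* = l^{-2} \sum_{(\lambda,\mu) \in (\mf Z/l\mf Z)^2} \psi \mid_{k,m} \left( \begin{psm} l & 0 \\ 0 & l \end{psm}, [\lambda, \mu] \right), \]
by first unwinding the stroke operator on the right and then averaging over $(\lambda,\mu)$. First I would start from the Fourier expansion $\psi(\tau,z) = \sum_{D',r'} c_\psi(D',r') e\left( \frac{r'^2 - D'}{4ml^2}\tau + r' z \right)$ of $\psi \in J_{k,ml^2}$, writing the index in terms of the discriminant $D' = 4ml^2 n' - r'^2$ as is natural here. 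I would then apply the action of $\left( \begin{psm} l & 0 \\ 0 & l \end{psm}, [\lambda,\mu] \right)$ using the definition of $\mid_{k,m}$ given just before \lemref{adjoint}: the diagonal matrix $\begin{psm} l & 0 \\ 0 & l \end{psm}$ acts with $c=0$, so it contributes only a $\det(\alpha)^{k/2} = l^k$ type scaling together with the substitution $z \mapsto z/l$ (up to the normalization built into the operator), while the Heisenberg part $[\lambda,\mu]$ contributes the standard Jacobi automorphy factor $e^{2\pi i m(\lambda^2 \tau + 2\lambda z)}$ and the argument shift $z \mapsto z + \lambda\tau + \mu$.

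Next I would carry out the averaging over $(\lambda,\mu) \in (\mf Z/l\mf Z)^2$. The key point is that summing over $\mu \bmod l$ in the argument $z + \lambda\tau + \mu$, combined with the factor $e(r' z)$, produces a character sum $\sum_{\mu \bmod l} e(r' \mu/l)$-type expression (after tracking the $z/l$ rescaling), which forces a congruence condition on the surviving Fourier indices, namely $l \mid r'$, i.e. $r' = l r$ for an integer $r$. The summation over $\lambda \bmod l$ then, after completing the square in the exponent coming from $\lambda^2 \tau + 2\lambda z$ against the theta-like terms, reorganizes the remaining index $r$ modulo the finer class structure; this is what produces the inner sum $\sum_{r' \bmod 2ml,\ r' \equiv r \bmod 2m}$ in the statement. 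I expect that after these two averagings one lands on coefficients of the form $c_\psi(l^2 D, l r')$ evaluated at indices with $4(ml^2) n' - (lr')^2 = l^2(4mn - r^2) = l^2 D$, matching the discriminant $l^2 D$ appearing in the lemma.

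Finally I would assemble the overall power of $l$: the $l^{-2}$ from the definition of $U_l^*$, the $l^{k/2}$ (or $l^k$) from $\det(\alpha)^{k/2}$, and whatever factors arise from the change of variable $z \mapsto z/l$, should collapse to the single $l^{-1}$ prefactor in the statement once the weight-$k$ automorphy factors are accounted for consistently. I would then rewrite the resulting exponential in the claimed normalization $e\left( \frac{r^2 - D}{4m}\tau + rz \right)$, checking that the new index $m$ (rather than $ml^2$) is correct, which is forced by the $l \mid r'$ condition that collapses the index.

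The main obstacle I anticipate is the careful bookkeeping of the Heisenberg averaging: tracking how the two independent sums over $\lambda$ and $\mu$ modulo $l$ interact with the completing-the-square in the Jacobi exponent, and ensuring the congruence $r' \equiv r \bmod 2m$ inside the inner sum emerges correctly rather than a coarser or finer congruence. A related delicate point is the normalization conventions: the operator $\mid_{k,m}$ here carries a $\det(\alpha)^{k/2}$ factor and a specific exponential, so getting the final power of $l$ to reduce exactly to $l^{-1}$ requires keeping every factor straight. These are entirely mechanical once set up correctly, so the proof should be a direct (if slightly tedious) computation rather than requiring any new idea.
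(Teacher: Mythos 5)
Your proposal is correct and is essentially identical to the paper's own proof: the paper likewise starts from the explicit formula $\psi \mid U_l^* = l^{-2}\sum_{(\lambda,\mu)} \psi \mid_{k,m} \left(\begin{psm} l & 0 \\ 0 & l\end{psm}, [\lambda,\mu]\right)$, inserts the Fourier expansion of $\psi$, uses the character sum $\sum_{\mu \bmod l} e(\mu r/l)$ to force $l \mid r$ (turning $l^{-2}$ into $l^{-1}$), completes the square in the $\lambda$-sum to get $e\left(\frac{(2\lambda m l + r)^2 - D}{4ml^2}\tau\right)$, and finishes with the substitution $r \mapsto lr$, $D \mapsto l^2 D$, which yields exactly the inner sum over $r' \bmod 2ml$ with $r' \equiv r \bmod 2m$. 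The only bookkeeping point you left implicit — that the scalar matrix contributes no net power of $l$ because $\det(\alpha)^{k/2} = l^k$ cancels against $(c\tau+d)^{-k} = l^{-k}$ — works out exactly as you anticipated.
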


\begin{proof}
From Lemma~\ref{ul*lem}, we see that, for $\psi \in J_{k,ml^2}$, $\psi\mid U_l^*$ equals
\begin{align*}
&l^{-2} \sum_{(\lambda,\mu) \in (\mf Z/ l \mf Z)^2} e^m(\lambda^2 \tau+ 2 \lambda z) \sum_{(D,r)} c_\psi(D,r) e \left( \frac{r^2-D}{4ml^2}  \tau+ (\frac{z +\lambda \tau +\mu}{l} ) r \right) \\
&=l^{-2} \sum_{(D,r)} c_\psi(D,r) \sum_\lambda e \left( ( m \lambda^2 + \frac{r^2-D}{4ml^2} +\frac{r \lambda}{l} ) \tau \right) e \left( (2 \lambda m + \frac{r}{l} )z\right) \sum_\mu e \left(  \frac{\mu r}{l} \right) \\
&= l^{-1} \sum_{\substack{  (D,r) \\ l \mid r} } c_\psi(D,r) \sum_{\lambda \bmod l} e \left(  \frac{(2 \lambda m l +r)^2 -D}{4ml^2} \tau \right) e \left( ( \frac{2 \lambda m l + r} {l} )z\right) .
\end{align*}
After this, making a change of variable $r \mapsto lr$, $D \mapsto Dl^2$ (note that $D \equiv r^2 \bmod{4ml^2})$, gives the desired result. 
\end{proof}

\subsection{Adjoint of $V_N$} \label{VN}
We recall the Hecke operator $V_N$ for $N\geq 1$ in the setting of Jacobi forms of index $m$, which maps $J_{k,1}$ to $J_{k,N}$ and moreover preserves the space of Eisenstein series. In terms of the Fourier expansions it is defined to be the operator which maps
\begin{align*} 
 \underset{ \substack {D<0,r\in \mathbf{Z}  \\ D\equiv r^2 \bmod 4 } }\sum c(D,r) e\left( \frac{r^2-D}{4} \tau + rz \right) 
\mapsto  \underset{ \substack {D<0,r\in \mathbf{Z}  \\ D\equiv r^2 \bmod 4N } }\sum  \big(  \underset{\substack{d \mid (r,N)  \\ D\equiv r^2 \bmod 4Nd }}\sum d^{k-1} c \Big( \frac{D}{d^2}, \frac{r}{d} \Big)  \big)  e\left( \frac{r^2-D}{4N} \tau + rz \right) . \nonumber
\end{align*}

In \cite{KS}, the adjoint $V_N^*$ of $V_N$ w.r.t. $\langle , \rangle$ has been computed, which we recall below. Namely the action of $V_N^*$ as an operator from $J_{k,N}$ to $J_{k,1}$ on the Fourier coefficients is given by
\begin{align} \label{vn*}
&\underset{ \substack {D<0,r\in \mathbf{Z}  \\ D\equiv r^2 \bmod 4N } }\sum c(D,r)e\left( \frac{r^2-D}{4N} \tau + rz \right) \\
& \mapsto \underset{ \substack {D<0,r\in \mathbf{Z}  \\ D\equiv r^2 \bmod 4 } }\sum \Big ( \underset{d \mid N}\sum d^{k-2} \ut{ \substack{s \bmod{2d} \\ s^2 \equiv D \bmod 4d }}\sum c(N^2/d^2,Ns/d) \Big ) e\left( \frac{r^2-D}{4} \tau + rz \right).
\end{align}
An inspection of the proof in \cite{KS} shows that the same formula for $V_N^*$ holds on $J_{k,1}$ w.r.t. the extended product $\{ \, , \}$, as one is able to express $V_N$ by the actions of elements in $\mrm{SL}(2, \mf Q) \ltimes \mf Q^2$ and there is no problem in carrying these over w.r.t. $\{ \, , \}$.

\subsection{Bounds for certain eigenvalues}
For positive integers $\ell_i, q_i$ ($i=1,2$) with $q_i \ell_i^2=m$ and $\bphi = E_{k,1}$, we put
\begin{align}
&\bphi \mid V_{q_1} = \bphi_1, & \bphi \mid V_{q_1} \mid U_{\ell_1} = \bphi_2,\\
&\bphi \mid V_{q_1} \mid U_{\ell_1} \mid U^*_{\ell_2}= \bphi_3, &\bphi  \mid V_{q_1} \mid U_{\ell_1} \mid U^*_{\ell_2} \mid V^*_{q_2} = \bphi_4.
\end{align}

Our next aim is to compute the degenerate Fourier coefficients $c_{\bphi_j}(0,r)$ ($r \in \mf Z$) of these forms by making use of the results of the pervious section. These are as follows. Let $\sigma_{k-1}(n) = \sum_{d \mid n} d^{k-1}$ and $ \mrm{gcd.}(a,b,c)$ be the greatest common divisor of $a,b,c$.
\begin{align}
c_{\bphi_1}(0,r) &= \sigma_{k-1} \big( \mrm{gcd.}( \frac{r^2}{4q_1}, \frac{r}{2}, q_1 )  \big), \nonumber \\
c_{\bphi_2}(0,r) &= \sigma_{k-1} \big(  \mrm{gcd.}( \frac{r^2}{4m}, \frac{r}{2 \ell_1}, q_1 )  \big) , \nonumber \\
c_{\bphi_3}(0,r) &=  \ell_2^{-1} \underset{ \substack{ r' \bmod{2m/\ell_2^2} \\ r' \equiv r \bmod{2m/\ell_2} }}\sum \sigma_{k-1} \big(  \mrm{gcd.}( \frac{r'^2}{4m}, \frac{r'}{2 \ell_1}, q_1 )  \big), \nonumber \\
c_{\bphi_4}(0,r) &=  \ell_2^{-1} \underset{x \mid q_2}\sum x^{k-2}  \underset{ \substack{s \bmod {2x} \\ s^2 \equiv 0 \bmod{4x} } }\sum \ \underset{ \substack{ r' \bmod{q_2} \\ r' \equiv \frac{q_2}s{x} \bmod{2 q_2\ell_2} }}\sum \sigma_{k-1} \big(  \mrm{gcd.}( \frac{r'^2}{4m}, \frac{r'}{2 \ell_1}, q_1 )  \big). \label{phi4}
\end{align}
We note that $c_{\bphi_4}(0,r)$ does not depend on $r$, as it should be, since $\bphi_4 \in J_{k,1}$. Finally, since each of the above operators preserve the space of Eisenstein series and $J_{k,1}$ is one dimensional, we have in the above notation, the following.

\begin{prop} \label{eigenest}
We keep the above notation. Then $\phi_4 = c_{\bphi_4}(0,0) E_{k,1}$; \text{moreover one has }
\[ c_{\bphi_4}(0,0) \ll (q_1q_2)^{k - 5/4}.  \]
\end{prop}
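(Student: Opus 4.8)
The plan is to establish the bound $c_{\bphi_4}(0,0) \ll (q_1 q_2)^{k-5/4}$ by working directly with the explicit formula \eqref{phi4} specialized to $r=0$, and estimating each of the nested sums in turn. I would begin by setting $r=0$ in \eqref{phi4}, which forces the innermost arithmetic-function arguments to simplify: the condition $r' \equiv \frac{q_2 s}{x} \bmod 2q_2\ell_2$ together with $r=0$ determines the residue class of $r'$, and the quantity $\mrm{gcd.}(\frac{r'^2}{4m}, \frac{r'}{2\ell_1}, q_1)$ is being fed into $\sigma_{k-1}$. The first observation is that the gcd in question divides $q_1$, so each $\sigma_{k-1}$ term is at most $\sigma_{k-1}(q_1) \ll q_1^{k-1+\varepsilon}$; this is the crude pointwise bound I would use as a baseline before counting terms.

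Next I would count the number of summation terms. The outer sum runs over $x \mid q_2$, the middle sum over $s \bmod 2x$ with $s^2 \equiv 0 \bmod 4x$, and the inner sum over $r' \bmod q_2$ in a fixed residue class modulo $2q_2\ell_2$; there is also the overall factor $\ell_2^{-1}$. The number of $s$ with $s^2 \equiv 0 \bmod 4x$ is $O(\sqrt{x}\,d(x))$ or so (roughly $x^{1/2+\varepsilon}$), and for each such $x,s$ the inner residue condition modulo $2q_2\ell_2$ typically leaves $O(\ell_2^{-1} \cdot (\text{count}))$ admissible $r'$, which is how the $\ell_2^{-1}$ prefactor is meant to be absorbed. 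The key is to show that after summing the bounded $\sigma_{k-1}$ contributions over all these ranges, the total number of effective terms times the pointwise size produces the exponent $k-5/4$ in $q_1 q_2$. I would track the $q_1$-dependence (coming entirely from the $\sigma_{k-1}$ argument dividing $q_1$) and the $q_2$-dependence (coming from the $x\mid q_2$, $s$, and $r'$ counts together with $\ell_2^{-1}$) separately, since they decouple.

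The delicate point, and the main obstacle, is obtaining the \emph{savings} in $q_1$ that improves the trivial exponent $k-1$ down to $k-5/4$, i.e. a gain of $q_1^{1/4}$. This cannot come from merely bounding each $\sigma_{k-1}$ by $q_1^{k-1+\varepsilon}$; instead I expect that on average over $r'$ the gcd $\mrm{gcd.}(\frac{r'^2}{4m}, \frac{r'}{2\ell_1}, q_1)$ is small, so that $\sum_{r'} \sigma_{k-1}(\mrm{gcd.}(\cdots))$ enjoys cancellation or at least an averaged bound substantially below $q_1^{k-1}$ times the number of terms. Concretely I would estimate $\sum_{d \mid q_1} d^{k-1} \cdot \#\{r' : d \mid \mrm{gcd.}(\cdots)\}$, using that the divisibility $d \mid \frac{r'}{2\ell_1}$ restricts $r'$ to roughly a $1/d$ fraction of the range; balancing the $d^{k-1}$ growth against the $1/d$ density of surviving $r'$ is what yields the fractional exponent. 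The relation $q_i \ell_i^2 = m$ and the interplay between $q_1, q_2, \ell_1, \ell_2$ must be used to convert all the $\ell$- and $m$-dependences back into the clean form $(q_1 q_2)^{k-5/4}$.

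Finally, with the bound on $c_{\bphi_4}(0,0)$ in hand, the statement $\bphi_4 = c_{\bphi_4}(0,0) E_{k,1}$ is immediate from the fact, already noted in the excerpt, that each of $V_{q_1}, U_{\ell_1}, U^*_{\ell_2}, V^*_{q_2}$ preserves the one-dimensional Eisenstein subspace of $J_{k,1}$, so $\bphi_4$ is a scalar multiple of $E_{k,1}$, and comparing the $(0,0)$ Fourier coefficient (recalling $c_{E_{k,1}}(0,0)=1$) identifies the scalar. Thus the entire proof reduces to the arithmetic estimate of \eqref{phi4} at $r=0$, and I would present it as a clean counting argument with the $q_1^{1/4}$ saving as its technical heart.
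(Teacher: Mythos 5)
Your identification of $\bphi_4$ with $c_{\bphi_4}(0,0)E_{k,1}$ is exactly the paper's argument, and your ``baseline'' estimates are also exactly the paper's: bounding $\sigma_{k-1}(\gcd(\cdots))$ by $\sigma_{k-1}(q_1)$, noting that the $r'$-sum has at most $2\ell_2$ terms (which precisely cancels the prefactor $\ell_2^{-1}$), and using $\#\{s \bmod 2x : s^2 \equiv 0 \bmod 4x\} = x_1 \le \sqrt{x}$ (where $x=x_0x_1^2$, $x_0$ squarefree) already give
\[ c_{\bphi_4}(0,0) \ll q_1^{k-1}\, q_2^{\,k-3/2}, \]
which is the paper's intermediate bound. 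The genuine gap is in what you call the technical heart. You insist the $q_1^{1/4}$ saving must come from averaging $\sigma_{k-1}\bigl(\gcd(r'^2/4m,\, r'/2\ell_1,\, q_1)\bigr)$ over $r'$; no such saving exists. All terms in \eqref{phi4} are nonnegative, the $r'$ run over a single congruence class containing only about $2\ell_2$ values --- a quantity unrelated to $q_1$, and equal to $1$ when $\ell_2=1$, i.e.\ $q_2=m$ --- and the choice $s=0$, $r'=0$ always occurs, for which the gcd equals $q_1$ exactly. Hence the right-hand side of \eqref{phi4} at $r=0$ is bounded \emph{below} by $\ell_2^{-1}\sigma_{k-1}(q_1)\sum_{x\mid q_2}x^{k-2} \gg \ell_2^{-1}q_1^{k-1}q_2^{k-2}$; in particular for $\ell_2=1$ the exponent of $q_1$ in this expression genuinely is $k-1$, so no refinement of the counting (including your $d \mid r'/2\ell_1$ density argument, whose $1/d$ heuristic breaks down as soon as $d > \ell_2$) can push it below $k-1$. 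This is exactly why the paper remarks that taking $\sigma_{k-1}(q_1)$ is ``not really an overkill''.

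The idea you are missing is symmetrization. The bound above is asymmetric because the $\sqrt{x}$-saving from counting square roots of $0$ lives entirely on the $q_2$ side. Since $c_{\bphi_4}(0,0)\{E_{k,1},E_{k,1}\} = \{\bphi \mid V_{q_1} \mid U_{\ell_1},\, \bphi \mid V_{q_2} \mid U_{\ell_2}\}$ and the extended product is Hermitian, the situation is symmetric in $(q_1,\ell_1)\leftrightarrow(q_2,\ell_2)$ up to complex conjugation; running the same Fourier-coefficient estimate on the swapped configuration therefore also gives $c_{\bphi_4}(0,0) \ll q_2^{k-1}q_1^{\,k-3/2}$. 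Combining the two bounds (minimum, or geometric mean) yields $(q_1q_2)^{k-5/4}$: the exponent $k-\tfrac{5}{4}$ is literally the average of $k-1$ and $k-\tfrac{3}{2}$, not the outcome of a finer count. Your proposal is easily repaired: keep your baseline estimate, delete the averaging step, and add this symmetry argument.
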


\begin{proof}
We start by bounding the term $\sigma_{k-1}( \ldots)$ by $\sigma_{k-1}(q_1)$ in \eqref{phi4}. This is not really an overkill, because $q_1$ could divide each of the quantities $r'^2/4m,r'/2 \ell_1$. The number of terms in the summation over $r'$ is atmost $2 \ell_2$. Hence
\[ c_{\bphi_4}(0,0) \leq 2 \sigma_{k-1}(q_1) \underset{x \mid q_2}\sum x^{k-2}  \, \# \{ s \bmod {2x} \mid s^2 \equiv 0 \bmod{4x} \} .\]

But it is known that $\# \{ s \bmod {2x} \mid s^2 \equiv 0 \bmod{4x} \} = x_1$ where $x=x_0x_1^2$ with $x_0$ square-free. We recall the short proof. If $N(x)$ denotes the above quantity, then it is known, see eg., \cite{EZ} that
\[ \sum_{x=1}^\infty N(x) x^{-s} = \zeta(s)\zeta(2s-1) /\zeta(2s);\] from which it follows that $N(x) = \underset{f^2 \mid x}\sum f |\mu(x/f^2)| = x_1$, as claimed. This gives us the bound
\[ c_{\bphi_4}(0,0) \ll q_1^{k-1} q_2^{k-3/2}. \]

Using the symmetry of the situation in $q_1,q_2$; i.e., 
\[ \{ \bphi \mid V_{q_1} \mid U_{\ell_1} ,  \bphi \mid V_{q_2} \mid U_{\ell_2}   \}  = \overline{\{ \bphi \mid V_{q_2} \mid U_{\ell_2} ,  \bphi \mid V_{q_1} \mid U_{\ell_1}   \}} \]

we also get
\[  c_{\bphi_4}(0,0) \ll q_2^{k-1} q_1^{k-3/2}; \]

and hence by combining these two, that
\[
 c_{\bphi_4}(0,0) \ll (q_1 q_2)^{k-5/4}. \qedhere \]
\end{proof}

\section{Bounds for extended Petersson norms and implications}
\subsection{Bound for the Petersson norm of $\mc E_{k,m}$} \label{ekm-bd}
In this subsection we would like to estimate the quantity $ \{ \mc E_{k,m} , \mc E_{k,m} \}$. We recall from section~\ref{Kleisen} that
\[ \mathcal{E}_{k,m} = \underset{ \substack{ 1 \leq \lambda \leq b/2 \\ \lambda \mid b }   }\sum a_f( a \lambda^2 ) \underset{ \substack{1 \leq d \leq b/2 \\ d \lambda \mid b  } }\sum \mu(d) E_{k, a\lambda^2 d^2} \mid U_{\frac{b}{\lambda d}}. \]

Hence letting $\lambda_1,\lambda_2,d_1,d_2$ run over obvious ranges, we have
\begin{equation}
\begin{gathered}
 \{ \mc E_{k,m} , \mc E_{k,m} \} \\
 = \underset{\lambda_1,\lambda_2,d_1,d_2}\sum a_f( a \lambda_1^2 ) a_f( a \lambda_2^2 ) \mu(d_1)   \mu(d_2) \{ E_{k, a\lambda_1^2 d_1^2} \mid U_{\frac{b}{\lambda d_1}}, E_{k, a\lambda_2^2 d_2^2} \mid U_{\frac{b}{\lambda d_2}}\}. 
\end{gathered} 
\end{equation}

Bounding absolutely, using Deligne's bound on the Fourier coefficients of elliptic cusp forms, we find that for any $\varepsilon>0$, $\{ \mc E_{k,m} , \mc E_{k,m} \}$ is bounded by
\begin{equation} \label{Ekmest}
 ( a \lambda_1^2)^{\frac{k-1}{2} +\varepsilon} ( a \lambda_2^2)^{\frac{k-1}{2} +\varepsilon} \underset{\lambda_1,\lambda_2,d_1,d_2}\sum \{ E_{k, a\lambda_1^2 d_1^2} \mid U_{\frac{b}{\lambda_1 d_1}}, E_{k, a\lambda_2^2 d_2^2} \mid U_{\frac{b}{\lambda_1 d_2}}\}; 
\end{equation}
where the implied constant may depend on $f$.

To bound the above inner product of the two Jacobi forms on the r.h.s. above, we use  the relation between $E_{k,m}$ and $E_{k,1}$ from \propref{ekmek1} and use the commutativity properties of the operators $U_l$, $V_{l'}$ (see eg. \cite{EZ}) to get
\begin{equation} \label{ekek}
\begin{gathered}
 \{ E_{k, a\lambda_1^2 d_1^2} \mid U_{\frac{b}{\lambda_1 d_1}}, E_{k, a\lambda_2^2 d_2^2} \mid U_{\frac{b}{\lambda_2 d_2}}\}  =\\
   (a \lambda_1^2 d_1^2)^{-k+1} (a \lambda_2^2 d_2^2)^{-k+1} 
 \underset{p \mid a \lambda_1^2 d_1^2}\prod (1+p^{-k+1})^{-1} \underset{p \mid a \lambda_2^2 d_2^2}\prod (1+p^{-k+1})^{-1} \\
\times \underset{ \substack{ t_1^2 \mid a\lambda_1^2 d_1^2 \\t_2^2 \mid a\lambda_2^2 d_2^2 } }\sum \mu(t_1) \mu(t_2) \ \{ E_{k,1} \mid V_{\frac{ a\lambda_2^2 d_2^2 }{t_2^2}} \circ U_{\frac{t_1b}{\lambda_1 d_1}}  ,  E_{k,1} \mid V_{\frac{ a\lambda_2^2 d_2^2 }{t_2^2}} \circ U_{\frac{t_1b}{\lambda_1 d_1}} \}.
\end{gathered} 
\end{equation}

The stage is now set for an application of \propref{eigenest} to the above equation. Namely for $i=1,2$, putting
\[ \ell_i =  \frac{t_ib}{\lambda_i d_i} , \q q_i = \frac{ a\lambda_i^2 d_i^2 }{t_i^2} \]
in the setting of \propref{eigenest} and using the estimete there in \eqref{ekek} and bounding absolutely, we get

\begin{equation} \nonumber
\begin{gathered}
 \{ E_{k, a\lambda_1^2 d_1^2} \mid U_{\frac{b}{\lambda_1 d_1}}, E_{k, a\lambda_2^2 d_2^2} \mid U_{\frac{b}{\lambda_2 d_2}}\}   \\
\ll  (a \lambda_1^2 d_1^2)^{-k+1} (a \lambda_2^2 d_2^2)^{-k+1} 
\underset{ \substack{ t_1^2 \mid a\lambda_1^2 d_1^2 \\t_2^2 \mid a\lambda_2^2 d_2^2 } } \sum \big( \frac{ a\lambda_1^2 d_1^2 }{t_1^2} \big)^{k-5/4} \big( \frac{ a\lambda_2^2 d_2^2 }{t_2^2} \big)^{k-5/4} \\
\ll (a \lambda_1^2 d_1^2)^{-1/4 +\varepsilon} (a \lambda_2^2 d_2^2)^{-1/4+ \varepsilon}. 
\end{gathered} 
\end{equation}

Putting this bound in \eqref{Ekmest}, we get
\begin{align*}
\{ \mc E_{k,m} , \mc E_{k,m} \} &\ll \underset{\lambda_1,\lambda_2}\sum (a \lambda_1^2 )^{\frac{k-1}{2}-\frac{1}{4} +\varepsilon} (a \lambda_2^2 )^{\frac{k-1}{2}-\frac{1}{4}+ \varepsilon}  \underset{d_1,d_2}\sum  (d_1)^{-1/2 +\varepsilon}  (d_2)^{-1/2 +\varepsilon} \\
&\ll \underset{\lambda_1,\lambda_2}\sum (a \lambda_1^2 )^{ \frac{k-1}{2}-\frac{1}{4} +\varepsilon} (a \lambda_2^2 )^{ \frac{k-1}{2}-\frac{1}{4}+ \varepsilon} \left( \frac{b}{\lambda_1} \right)^\varepsilon \left( \frac{b}{\lambda_2} \right)^\varepsilon\\
& \ll b^\varepsilon m^{k-\frac{3}{2} + \varepsilon} \ll m^{k-\frac{3}{2} + \varepsilon} .
\end{align*}

We state this as the main result of this section.
\begin{thm} \label{ekm-final}
For any given $\varepsilon>0$ one has
$\{ \mc E_{k,m} , \mc E_{k,m} \} \ll_{f,\varepsilon} m^{k-\frac{3}{2} + \varepsilon} .$
\end{thm}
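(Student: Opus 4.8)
The plan is to reduce everything to the eigenvalue estimate of \propref{eigenest} via the explicit description of $\mc E_{k,m}$. First I would start from the formula in \thmref{ekmfor}, which writes $\mc E_{k,m}$ as a linear combination of the Jacobi Eisenstein series $E_{k,a\lambda^2 d^2}\mid U_{b/(\lambda d)}$ with coefficients $a_f(a\lambda^2)\mu(d)$, the indices $\lambda, d$ ranging over divisors of $b$. Expanding the sesquilinear form $\{\mc E_{k,m},\mc E_{k,m}\}$ then produces a fourfold sum over $\lambda_1,\lambda_2,d_1,d_2$ whose summands split into an arithmetic factor $a_f(a\lambda_1^2)\overline{a_f(a\lambda_2^2)}\mu(d_1)\mu(d_2)$ and a Jacobi inner product of two Eisenstein series. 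The Fourier coefficients $a_f$ are bounded by Deligne's estimate, contributing $(a\lambda_i^2)^{(k-1)/2+\varepsilon}$.

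Next I would handle each inner product $\{E_{k,a\lambda_1^2 d_1^2}\mid U_{\cdots}, E_{k,a\lambda_2^2 d_2^2}\mid U_{\cdots}\}$. Using the relation \eqref{ekmek1} between $E_{k,N}$ and $E_{k,1}$, together with the commutation rules between the Hecke operators $U_\ell$ and $V_{N}$, each such inner product becomes a sum (over the M\"obius variables $t_1,t_2$ dividing the appropriate indices) of inner products of the shape $\{E_{k,1}\mid V_{q_1}\circ U_{\ell_1}, E_{k,1}\mid V_{q_2}\circ U_{\ell_2}\}$, multiplied by the normalizing factors coming from $g_k(\cdot)^{-1}$. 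This is exactly the configuration analysed in \propref{eigenest}, so applying its bound $(q_1 q_2)^{k-5/4}$ with $q_i=a\lambda_i^2 d_i^2/t_i^2$ gives, after summing over $t_1,t_2$, the clean estimate $(a\lambda_1^2 d_1^2)^{-1/4+\varepsilon}(a\lambda_2^2 d_2^2)^{-1/4+\varepsilon}$ for each inner product.

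Finally I would reassemble the pieces and carry out the outer summation. Combining the Deligne factor with this bound, the exponent of $a\lambda_i^2$ becomes $\frac{k-1}{2}-\frac14+\varepsilon$, while $d_i$ appears with the summable exponent $-\frac12+\varepsilon$, so summing over $d_1,d_2$ costs only a factor $b^\varepsilon$. The dominant term in the remaining sum over $\lambda_1,\lambda_2$ comes from $\lambda_i=b$, where $a\lambda_i^2=m$, so that the two factors contribute $m^{(k-1)/2-1/4}$ each and multiply to $m^{k-3/2}$; the divisor sums only inflate this by $m^\varepsilon$. This yields the stated bound.

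The main obstacle is the exponent bookkeeping: the trivial bound on $\{E_{k,1}\mid V_{q}, E_{k,1}\mid V_q\}$ would give only $(q_1 q_2)^{k-1}$ and hence the weaker final exponent $k-1$, so everything rests on the saving of $1/4$ in each factor furnished by \propref{eigenest}. That saving is itself genuinely nontrivial: it comes from the symmetrization trick in $q_1\leftrightarrow q_2$ applied to the asymmetric bound $q_1^{k-1}q_2^{k-3/2}$, together with the exact evaluation of the counting function $N(x)=x_1$ for solutions of $s^2\equiv0\pmod{4x}$. Keeping track that this $1/4$ propagates correctly through the M\"obius inversions and the reduction \eqref{ekmek1} — rather than being lost in the absolute-value estimates — is the delicate point.
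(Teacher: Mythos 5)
Your proposal is correct and follows essentially the same route as the paper's own proof: expand $\{\mc E_{k,m},\mc E_{k,m}\}$ via \thmref{ekmfor} into the fourfold sum, apply Deligne's bound to the $a_f(a\lambda_i^2)$, reduce each Eisenstein inner product through \eqref{ekmek1} and the $U$--$V$ commutation rules to the configuration of \propref{eigenest}, and then carry out the same exponent bookkeeping ($\frac{k-1}{2}-\frac14$ per $\lambda_i$-factor, summable $d_i$-exponents, divisor sums absorbed into $m^\varepsilon$). You have also correctly identified the crux — the $1/4$-saving from the symmetrization in \propref{eigenest}, without which one only gets exponent $k-1$ — which is exactly where the paper's argument gains over the trivial bound.
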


\subsection{On estimates for extended norms for Fourier-Jacobi coefficients} \label{S2J}
We begin with a general estimate of the Petersson norm $\{\phi_N,\psi_N \}$ of Fourier Jacobi coefficients of two Siegel modular forms $F(Z)=\sum_{N \geq 0} \phi_N(\tau,z)  e^{2\pi i N\tau'}$ and $G(Z)=\sum_{N \geq 0} \psi_N(\tau,z) e^{2\pi i N\tau'}$ of degree two.

\begin{prop} \label{fj-final}
With the above notation and $N \geq 1$, we have
\[ \{\phi_N,\psi_N\}={\mathcal O}(N^k). \]
\end{prop}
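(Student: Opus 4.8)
The plan is to convert $\{\phi_N,\psi_N\}$ into a Rankin--Selberg-type integral over the Klingen quotient and then estimate the latter using \lemref{phij-lem}. Concretely, attach to $f=\phi_N\overline{\psi_N}$ the function $H(Z)=\phi_N\overline{\psi_N}\,e^{-4\pi Nv'}\det(Y)^k$ as in \eqref{H}, and consider $\int_{C_{2,1}(\mf Z)\backslash\mf H_2}\mc M(H)\,d\mu$, where $d\mu=\det(Y)^{-3}\,dX\,dY$ is the invariant measure and $\mc M$ is the Maa{\ss} operator from \eqref{M}. The goal is an identity of the shape
\[ \int_{C_{2,1}(\mf Z)\backslash\mf H_2}\mc M(H)\,d\mu=-k\,\Gamma(k-1)(4\pi)^{2-k}\,N^{2-k}\,\{\phi_N,\psi_N\}, \]
whose proportionality constant is nonzero for $k\ge 4$, together with the bound $\int_{C_{2,1}(\mf Z)\backslash\mf H_2}\mc M(H)\,d\mu=O(N^2)$; these two facts give $\{\phi_N,\psi_N\}=O(N^{k-2}\cdot N^2)=O(N^k)$.

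To establish the identity I would start from the decomposition of $\mc M(H)$ recorded just before \eqref{DJ0}, namely $\mc M(H)=\{D^J_1(h)\,t^{k+1}+D^J_2(h)\,t^{k+2}\}e^{-4\pi Nt}$ (using $D^J_0=0$ from \eqref{DJ0}), with $t=v'-y^2/v$ and $h=\phi_N\overline{\psi_N}v^ke^{-4\pi Ny^2/v}$. In the Klingen fundamental domain the variables split as $\tau\in\sltwo\backslash\mf H_1$, $x\bmod 1$, $y\bmod v$, $u'\bmod 1$, $t\in(0,\infty)$, with $\det(Y)=v\,t$; since $\mc M(H)$ does not involve $u'$, the $u'$-integration is trivial, and at fixed $(\tau,z)$ the $t$-integration against $t^{-3}\,dt$ produces Gamma integrals $\int_0^\infty t^{k+r-3}e^{-4\pi Nt}\,dt=\Gamma(k+r-2)(4\pi N)^{-(k+r-2)}$. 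Substituting \eqref{DJ1} and \eqref{DJ2}, the contributions of the inessential operators $R^J_1,R^J_2$ vanish upon integration by \eqref{vanish} (they satisfy property $(*)$), while every surviving term carries the operator $L_1+r=\mc D^J$, so what remains is $\int_{\Gamma^J\backslash\mf H_1\times\mf C}\mc D^J(h)\,\frac{du\,dx\,dv\,dy}{v^3}=\{\phi_N,\psi_N\}$ (the extended product of \defiref{Pdef}) times a scalar. A short computation with $R=-4\pi N$ and $r=-(k-\tfrac12)(k-\tfrac32)$ collapses the two constants $2R(k-\tfrac12)\Gamma(k-1)(4\pi N)^{1-k}$ and $R^2\Gamma(k)(4\pi N)^{-k}$ into the single factor $-k\,\Gamma(k-1)(4\pi)^{2-k}N^{2-k}$ displayed above.

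For the bound $O(N^2)$ I would invoke \lemref{phij-lem}, which gives $|\mc M(H)|\le C_0$ uniformly on $C_{2,1}(\mf Z)\backslash\mf H_2$ with $C_0$ independent of $N$ (it is a subseries of the absolutely bounded series of \lemref{phib}). Reading $\mc M(H)=\{D^J_1(h)t^{k+1}+D^J_2(h)t^{k+2}\}e^{-4\pi Nt}$ as a function of $t$ at fixed $(\tau,z)$ and evaluating at, say, $t=1/N$ and $t=2/N$, this uniform bound forces $D^J_1(h)\ll N^{k+1}$ and $D^J_2(h)\ll N^{k+2}$ uniformly in $(\tau,z)$; hence the Gamma integral over $t$ is $O\!\left(N^{k+1}\cdot N^{1-k}+N^{k+2}\cdot N^{-k}\right)=O(N^2)$, again uniformly. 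Integrating this bound over $x\bmod 1$, $y\bmod v$ and $\tau\in\sltwo\backslash\mf H_1$ against $v^{-3}\,dv\,du$ (the $y$-integration contributing a factor $v$, and $\int_{\sltwo\backslash\mf H_1}v^{-2}\,du\,dv<\infty$) yields $\int_{C_{2,1}(\mf Z)\backslash\mf H_2}\mc M(H)\,d\mu=O(N^2)$, completing the argument.

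The main obstacle is exactly this last estimate. The crude bound $|\mc M(H)|\le C_0$ from \lemref{phij-lem} is \emph{not} by itself enough to control $\int\mc M(H)\,d\mu$, because the invariant measure contributes a factor $t^{-3}$ that is non-integrable at $t=0$; one must use that $\mc M(H)$ vanishes to order $t^{k+1}$ as $t\to0$ and decays like $e^{-4\pi Nt}$ as $t\to\infty$, and convert the uniform pointwise bound into the polynomial estimates $D^J_r(h)\ll N^{k+r}$ that produce the power $N^2$. The remaining care-points are purely bookkeeping: tracking the several constants of proportionality in the first step, and verifying via \eqref{vanish} that the inessential operators $R^J_1,R^J_2$ genuinely integrate to zero.
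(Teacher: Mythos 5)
Your proposal is correct: the key identity, the vanishing of the inessential operators via \eqref{vanish}, the constant $-k\Gamma(k-1)(4\pi)^{2-k}$, and the two-slice extraction of the bounds $D^J_1(h)\ll N^{k+1}$, $D^J_2(h)\ll N^{k+2}$ all check out. But the assembly is genuinely different from, and heavier than, the paper's. The paper never integrates over $t$ at all: it restricts $\mc M(H)$ to the \emph{single} slice $t=1/N$, where by \eqref{DJ1}, \eqref{DJ2} the expression collapses pointwise to $A\cdot \mc D^J(h)\,N^{-k}+(\text{inessential terms})$ with $A=\bigl(2(-4\pi)(k-\tfrac12)+(4\pi)^2\bigr)e^{-4\pi}$ independent of $N$, and then integrates only over $\Gamma^J\backslash({\mf H_1}\times{\mf C})$ with the measure $\frac{du\,dx\,dv\,dy}{v^3}$. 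Since that domain has finite volume, \lemref{phij-lem} gives $|A|\,N^{-k}\,|\{\phi_N,\psi_N\}|\le C\cdot\mathrm{Vol}$ in one stroke, the inessential parts dying by \eqref{vanish}; the $t^{-3}$ non-integrability you correctly identify as the main obstacle simply never enters. Your route instead computes the full six-dimensional integral over $C_{2,1}({\mf Z})\backslash{\mf H}_2$, obtaining the exact identity $\int \mc M(H)\,d\mu=-k\Gamma(k-1)(4\pi)^{2-k}N^{2-k}\{\phi_N,\psi_N\}$ --- which is precisely the computation the paper carries out later when unfolding against $E_{2,1}(Z,s)$ in the Kohnen--Skoruppa section --- and this forces you to invent the two-slice argument at $t=1/N$ and $t=2/N$ to turn the uniform bound of \lemref{phij-lem} into separate polynomial bounds on $D^J_1(h)$ and $D^J_2(h)$. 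What your approach buys is more information: the explicit Rankin--Selberg constant and individual pointwise control of the two operator pieces (which also supplies the Fubini justification your identity needs, so logically your last paragraph should come first). What the paper's approach buys is economy: one slice, a four-dimensional integral over a finite-volume domain, and no convergence issues to address at all.
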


\begin{proof}
 We recall from \lemref{phij-lem} that
\[ {\mathcal M}(\phi_N\overline{\psi_N}e^{-4\pi Nv'} \det(Y)^k)) \]
is bounded (uniformly in $N$) by a constant $C$.
On the other hand, we can write it explicitly as
\[ \left((2R(k-\frac{1}{2})\mc D^J+R^J_1)
(\phi_N\overline{\psi_N} v^k e^{-4\pi N \frac{y^2}{v}}) t^{k+1}
+(R^2\mc D^J+R^J_2)(\phi_N\overline{\psi_N} v^k e^{-4\pi N \frac{y^2}{v}}) t^{k+2}\right)
e^{-4\pi Nt} \]
where recall from \eqref{DJ0}, \eqref{DJ1}, \eqref{DJ2} and the disccusion thereafter, that
$R^J_1, R^J_2$ are the `inessential' parts of the Jacobi differential operators.
We put $t=y_4-\frac{y^2}{y_1}=\frac{1}{N}$
and recall that $R=-4\pi N$; 
then the expression above becomes
\[ A\cdot \mc D^J(\phi_N\overline{\psi_N} v^ke^{-4\pi N\frac{y^2}{y_1}})N^{-k}+ \ldots \]
with a nonzero constant $A= \left( 2(-4\pi)(k-\frac{1}{2}) + (4\pi)^2 \right)e^{-4 \pi} $ independent of $N$ and $\ldots$ denotes contributions from the
inessential operators $R_1^J$ and $R^J_2$.

Integration over the fundamental domain then kills the inessential part
and we get an estimate
\[ A\cdot \vert  \{\phi_N,\psi_N\}N^{-k}\vert \leq C\cdot \text{Volume}( \Gamma^J\backslash
{\mf H_1}\times {\mf C}). \qedhere \]
\end{proof}

\begin{rmk}
This is the same estimate as the one given by \cite{KS}
for Siegel cusp forms of degree $2$; this estimate holds more generally for
cusp forms of arbitrary degree.
\end{rmk}

\subsection{An estimate for $\phi^\circ_m$ of an arbitrary Siegel modular form} \label{phi0}

A Jacobi form admits a decomposition into a cusp form and
a sum of Jacobi Eisenstein series. 
For a degree $2$ Siegel modular form $F=\sum_m \phi_m(F) (\tau,z)e^{2\pi i m\tau'}$
we may therefore decompose the Fourier Jacobi coefficients for $m\geq 1$ as
\[ \phi_m(F)={\mathcal E}_{m}(F)+\phi_m^\circ(F) \]
where $\phi_m^\circ(F)$ is cuspidal and ${\mathcal E}_m(F)$ is in the space of Jacobi-
Eisenstein series. We drop the dependence on $F$ when its understood.
\begin{thm} \label{phi0-bd}
Let $F=\sum_m\phi_m(F)e^{2\pi i m\tau}$ be a 
Siegel modular form of degree 2 and weight $k$.
Then, with the above notation,
\[ \langle \phi^\circ_m(F),\phi^\circ_m(F) \rangle={O}(m^k). \]
\end{thm}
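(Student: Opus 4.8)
The plan is to expand the extended norm of $\phi_m$ along the decomposition $\phi_m=\mc E_m+\phi_m^\circ$ and to exploit \corref{cusp-eis}, which says that $\{\,,\}$ reduces to $d_k\langle\,,\rangle$ as soon as one argument is cuspidal. Since $\mc E_m$ lies in the space of Jacobi--Eisenstein series while $\phi_m^\circ$ is a Jacobi cusp form, the two mixed terms satisfy $\{\mc E_m,\phi_m^\circ\}=d_k\langle\mc E_m,\phi_m^\circ\rangle=0$ and $\{\phi_m^\circ,\mc E_m\}=0$ by the standard orthogonality of Eisenstein and cusp forms, while $\{\phi_m^\circ,\phi_m^\circ\}=d_k\langle\phi_m^\circ,\phi_m^\circ\rangle$. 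Expanding the Hermitian form bilinearly I therefore get the clean identity
\[ d_k\,\langle\phi_m^\circ,\phi_m^\circ\rangle=\{\phi_m,\phi_m\}-\{\mc E_m,\mc E_m\}. \]
As $k$ is even, $d_k\neq0$ by \corref{cusp-eis}, so it is enough to bound the absolute value of each term on the right by $O(m^k)$.

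First I would dispose of $\{\phi_m,\phi_m\}$: this is $O(m^k)$ directly from \propref{fj-final} (taking both Siegel forms equal to $F$). For $\{\mc E_m,\mc E_m\}$ I would use the structure theorem \eqref{mkn} to write $F=c\,E_{2,0}^k+E_{2,1}^k(h,\cdot)+F^\circ$ with $h\in S^1_k$ and $F^\circ\in S^2_k$. Passing to $m$-th Fourier--Jacobi coefficients and then to Eisenstein parts, the cuspidal summand $F^\circ$ contributes nothing and the Siegel--Eisenstein piece contributes the purely Eisenstein coefficient $e_{k,m}$ from \eqref{siegel-fj}, so that $\mc E_m=c\,e_{k,m}+\mc E_{k,m}$ with $\mc E_{k,m}$ the Eisenstein part governed by \thmref{ekm-final}. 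Expanding the Hermitian form, the diagonal terms are under control: $\{\mc E_{k,m},\mc E_{k,m}\}=O(m^{k-3/2+\varepsilon})$ by \thmref{ekm-final}, and $\{e_{k,m},e_{k,m}\}=O(m^k)$ by \propref{fj-final} applied to $E_{2,0}^k$.

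The one genuinely delicate point---and the main obstacle---is the cross term $\{e_{k,m},\mc E_{k,m}\}$. Because $\{\,,\}$ is only Hermitian and need not be positive definite, I cannot bound it by Cauchy--Schwarz from the two diagonal estimates; instead I would realise it as part of an honest Fourier--Jacobi pairing. Indeed $e_{k,m}$ and $\mc E_{k,m}$ are the Eisenstein parts of the $m$-th Fourier--Jacobi coefficients of the two Siegel modular forms $E_{2,0}^k$ and $E_{2,1}^k(h,\cdot)$, so \propref{fj-final} gives $\{e_{k,m},\phi_m(E_{2,1}^k(h,\cdot))\}=O(m^k)$; and since $\phi_m(E_{2,1}^k(h,\cdot))=\mc E_{k,m}+\phi_m^\circ(E_{2,1}^k(h,\cdot))$ with the cuspidal contribution killed by the orthogonality of \corref{cusp-eis}, this forces $\{e_{k,m},\mc E_{k,m}\}=O(m^k)$. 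Collecting the diagonal and cross estimates yields $\{\mc E_m,\mc E_m\}=O(m^k)$, and feeding this together with $\{\phi_m,\phi_m\}=O(m^k)$ into the displayed identity gives $\langle\phi_m^\circ,\phi_m^\circ\rangle=O(m^k)$, as desired.
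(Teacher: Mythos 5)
Your proof is correct and rests on exactly the same pillars as the paper's: the orthogonality of Jacobi--Eisenstein series and Jacobi cusp forms under both $\langle\,,\rangle$ and $\{\,,\}$ (via \corref{cusp-eis}), the bound $\{\phi_m,\phi_m\}=O(m^k)$ of \propref{fj-final}, the bound $\{\mc E_{k,m},\mc E_{k,m}\}=O(m^{k-3/2+\varepsilon})$ of \thmref{ekm-final}, and the fact from \cite{BFJ} that the Fourier--Jacobi coefficients $e_{k,m}$ of the Siegel Eisenstein series are purely Eisenstein. The only organizational difference is that the paper splits $F$ by the structure theorem at the outset, so the identity $d_k\langle\phi_m^\circ,\phi_m^\circ\rangle=\{\phi_m,\phi_m\}-\{\mc E_m,\mc E_m\}$ is invoked only for $F=E_{2,1}(f)$ (the Siegel--Eisenstein and cuspidal cases being settled by \cite{BFJ} and \cite{KS}), with the general case then assembled by linearity and the triangle inequality for the genuine norm $\langle\,,\rangle^{1/2}$; you instead keep $F$ intact, which forces you to control the extra cross term $\{e_{k,m},\mc E_{k,m}\}$, and your treatment of it---realizing it inside $\{e_{k,m},\phi_m(E_{2,1}^k(h,\cdot))\}=O(m^k)$ via \propref{fj-final} and stripping off the cuspidal part by orthogonality---is valid, precisely because \propref{fj-final} is stated for two possibly different Siegel modular forms. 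Both routes work; yours trades the paper's (implicit) triangle-inequality step for one additional cross-term estimate.
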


\begin{proof}
If $F$ is a Siegel Eisenstein series, then all the cuspidal
parts $\phi_m^\circ$ are zero \cite{BFJ}. If $F$ is cuspidal, then
$\phi_m=\phi_m^o$ and the claim can be found in \cite{KS}.
It remains the case of Klingen-Eisenstein series, i.e.
$F=E_{2,1}(f)$ for a degree one cusp form $f$.
In such cases, we may combine the two estimates
\[\{{\mathcal E}_m,{\mathcal E}_m\}={\mathcal O}(m^{k-\frac{3}{2}}),
\quad\quad
\{\phi_m,\phi_m\}={\mathcal O}(m^k) \]
from \thmref{ekm-final} and \propref{fj-final} and the fact that Jacobi-Eisenstein series are orthogonal to Jacobi cusp forms (w.r.t. $\{ \, , \}$ and $ \langle \, , \rangle$)
to get the result. The final claim is well-known for $\langle , \rangle$, and for $\{ \, ,\}$ we can invoke \corref{cusp-eis}.
\end{proof}

One of the main motivations of this work came from \cite{BV},
where the problem of estimating the Fourier coefficients of $\phi_m^o$
was raised with applications to quadratic forms in mind.
From the theorem above we get the following.

\begin{cor} \label{cmnr}
Let $F=\sum_m\phi_m(F) (\tau,z) e^{2\pi i m\tau'}$ be Siegel modular form
of degree $2$ and weight $k$, and put $\phi_m^\circ(\tau,z)=\sum_{n,r}
c^\circ_m(n,r)e^{2\pi (i n\tau+rz)}$; then the estimate
\[ c^\circ_m(n,r) ={O}\left( ( 4mn-r^2)^{\frac{k}{2}-\frac{1}{4}+\epsilon} \right) \]
holds, if the matrix $\left(\begin{smallmatrix} n &\frac{r}{2}\\
\frac{r}{2} & m\end{smallmatrix}\right)$ is reduced,  i.e. if $\mid r\mid \leq m\leq n$.
\end{cor}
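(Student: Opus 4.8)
The plan is to combine the norm bound $\langle\phi^\circ_m,\phi^\circ_m\rangle = O(m^k)$ supplied by \thmref{phi0-bd} with a sharp bound relating a \emph{single} Fourier coefficient of a Jacobi cusp form to its Petersson norm, and then to exploit the reducedness hypothesis $|r|\le m\le n$ to convert the residual power of $m$ into a power of $D:=4mn-r^2$. Since $r^2\le m^2\le mn$ for a reduced $T$, one has $D\asymp mn$ and, crucially, $m\ll D^{1/2}$; it is exactly this last inequality that will absorb the leftover factor of $m$ at the very end.

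First I would recall that $\phi^\circ_m$ is a Jacobi cusp form of weight $k$ and index $m$, and introduce the Jacobi Poincaré series $P_{k,m;n,r}$ of exponential type attached to $(n,r)$, whose defining term is $e(n\tau+rz)$. Unfolding it against $\phi^\circ_m$ in the product \eqref{petersson}, and carrying out the Gaussian integral in $y$ and the $\Gamma$-integral in $v$, produces an identity of the shape $\langle\phi^\circ_m,P_{k,m;n,r}\rangle=\mu_m(D)\,c^\circ_m(n,r)$ with
\[ \mu_m(D)=\frac{\Gamma(k-\tfrac32)}{2\pi^{\,k-3/2}}\,\frac{m^{\,k-2}}{D^{\,k-3/2}}. \]
The power of $m$ here is produced by the factor $\sqrt{v/4m}$ coming from integrating $e^{-4\pi m y^2/v}$ over $y\in\mf R$, exactly as in the first case of \eqref{zero}. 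Cauchy--Schwarz then gives $|c^\circ_m(n,r)|\le \langle\phi^\circ_m,\phi^\circ_m\rangle^{1/2}\,\|P_{k,m;n,r}\|\,\mu_m(D)^{-1}$, and since $\|P_{k,m;n,r}\|^2=\mu_m(D)\,c_P(n,r)$, where $c_P(n,r)$ is the $(n,r)$-th coefficient of the Poincaré series itself, this collapses to
\[ |c^\circ_m(n,r)|\le \langle\phi^\circ_m,\phi^\circ_m\rangle^{1/2}\,\mu_m(D)^{-1/2}\,c_P(n,r)^{1/2}. \]

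The next step is to control the self-coefficient $c_P(n,r)$: writing out the series one finds $c_P(n,r)=1+(\text{Kloosterman--Bessel terms})$, and the standard convergence estimates (valid here, as $k$ is large) yield $c_P(n,r)\ll_{k,\varepsilon}D^{\varepsilon}$ uniformly in $m$ and in $(n,r)$. Inserting this together with $\mu_m(D)^{-1/2}\asymp_k m^{-(k-2)/2}D^{\,k/2-3/4}$ and $\langle\phi^\circ_m,\phi^\circ_m\rangle^{1/2}=O(m^{k/2})$ from \thmref{phi0-bd} gives
\[ c^\circ_m(n,r)\ll_{k,\varepsilon} m^{k/2}\cdot m^{-(k-2)/2}\cdot D^{\,k/2-3/4+\varepsilon}=m\cdot D^{\,k/2-3/4+\varepsilon}, \]
and finally the reducedness inequality $m\ll D^{1/2}$ turns this into $c^\circ_m(n,r)\ll_{k,\varepsilon}D^{\,k/2-1/4+\varepsilon}$, which is the assertion. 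I expect the only genuinely delicate point to be the uniform estimate $c_P(n,r)\ll_{k,\varepsilon}D^{\varepsilon}$ for the Poincaré coefficient, since one must bound the sum of generalized Kloosterman sums weighted by Bessel functions uniformly in the index $m$ as well as in $(n,r)$; all the remaining steps are bookkeeping. It is worth emphasizing that a cruder argument through the theta decomposition and the Hecke (convexity) bound for the half-integral weight components only produces the exponent $k/2$, so the $L^2$-to-coefficient passage via Poincaré series is what yields the sharp $k/2-1/4$, and the hypothesis that $T$ be reduced enters precisely to furnish the bound $m\ll D^{1/2}$ that offsets the single surviving power of $m$.
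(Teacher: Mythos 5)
Your overall mechanism is the same one the paper uses, but your self-contained derivation of the key coefficient-versus-norm inequality contains a genuine gap, namely the claim that the diagonal coefficient of the Jacobi Poincar\'e series satisfies $c_P(n,r)\ll_{k,\varepsilon}D^{\varepsilon}$ uniformly in $m$. This is not a ``standard convergence estimate.'' Writing out the Fourier expansion of $P_{k,m;n,r}$, the diagonal coefficient is $1$ plus a sum over $c\ge 1$ of Jacobi--Kloosterman sums weighted by $J_{k-3/2}\bigl(\pi D/(mc)\bigr)$, and even with Weil/Sali\'e-type bounds for the individual Kloosterman sums the transition range $c\asymp D/m$ contributes a term of size about $D^{1/2+\varepsilon}/m$; what is actually provable is $c_P(n,r)\ll_{k,\varepsilon} 1+D^{1/2+\varepsilon}/m$, and this is precisely the origin of the factor $\bigl(1+\det(T)^{1/2+\varepsilon}/m\bigr)^{1/2}$ in Kohnen's bound \cite{Ko}. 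To see that your stronger claim is out of reach, specialize to $m=1$: via the theta decomposition it would give, for half-integral weight cusp forms of weight $k-\tfrac12$, the coefficient bound $c(D)\ll \|g\|\,D^{k/2-3/4+\varepsilon}$, which by Waldspurger/Kohnen--Zagier is equivalent to a Lindel\"of-quality bound for central twisted $L$-values --- getting it would require square-root cancellation in sums of Kloosterman sums far beyond Weil.

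The good news is that the gap is repairable and that repairing it lands you exactly on the paper's (one-line) proof: the paper simply quotes Kohnen's inequality
\[ c^\circ_m(n,r)\ \ll_{k,\epsilon}\ \Bigl(1+\frac{\det(T)^{\frac12+\epsilon}}{m}\Bigr)^{\frac12}\,\frac{\det(T)^{\frac{k}{2}-\frac34}}{m^{\frac{k}{2}-1}}\,\|\phi^\circ_m\| \]
from \cite{Ko} (which is proved by exactly your unfolding/Cauchy--Schwarz argument, with the correct Poincar\'e self-coefficient bound) and combines it with \thmref{phi0-bd}. Your bookkeeping then still closes: with $\|\phi^\circ_m\|\ll m^{k/2}$ one gets $c^\circ_m(n,r)\ll m\,D^{k/2-3/4}\bigl(1+D^{1/2+\epsilon}/m\bigr)^{1/2}$, and the reducedness bound $m\ll D^{1/2}$ absorbs the worst case: either the extra factor is $O(1)$ and $m\,D^{k/2-3/4}\ll D^{k/2-1/4}$, or it is $\ll D^{1/4+\epsilon/2}m^{-1/2}$ and $m^{1/2}D^{k/2-1/2+\epsilon/2}\ll D^{k/2-1/4+\epsilon/2}$. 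So your strategy, your unfolding identity, your use of $m\ll D^{1/2}$, and your final exponent are all correct; the only error is asserting a Poincar\'e-coefficient estimate that is essentially a Lindel\"of-type hypothesis, when the much weaker known estimate already suffices.
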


\begin{proof} For the proof we just have to recall the general estimate
\[ c^\circ_m(n,r)\quad \ll_{k,\epsilon}\quad
\left(1+\frac{\det(T)^{\frac{1}{2}+\epsilon}}{m}\right)^{\frac{1}{2}}
\frac{\det(T)^{\frac{k}{2}-\frac{3}{4}}}{m^{\frac{k}{2}-1}}\cdot  \| \phi^\circ_m \|,
\]
which is actually valid for any Jacobi cusp form (\cite{Ko});
this gives the estimate claimed above, when combined with \thmref{phi0-bd}. 
Note that the Fourier coefficient
$c^\circ_m(n,r)$ is in general not invariant under the action 
of $\gltwo$ on the matrix $\left(\begin{smallmatrix} n &\frac{r}{2}\\
\frac{r}{2} & m\end{smallmatrix}\right)$.
\end{proof}

\begin{rmk} \label{detour}
The estimate for the Fourier coefficients of Siegel cusp
forms given in \cite{Ko} is just a little better than the one above
(the eponent of $4mn-r^2$ in \cite{Ko} is $\frac{k}{2}-\frac{13}{16}+\epsilon$); and this is due to the application of a result of Landau on Dirichlet series with non-negative Dirichlet coefficients, which is not available here; as we do not understand
whether the Dirichlet series
$\sum_{m \geq 1} \langle\phi_m^o,\phi^o_m \rangle m^{-s}$
(or equivalently the series $\sum_{m \geq 1} \{ \mc E_{k,m}, \mc E_{k,m} \} m^{-s}   $) has reasonable analytic properties.
\end{rmk}

\begin{rmk}[Some oscillation property]
For the case of a Klingen Eisenstein series $E_{2,1}(f)$ of degree 2
attached to a normalized Hecke eigen cusp form $f$ of degree one,
we computed the coefficients $c^\circ_m(n,r)$ explicitly in \cite{BV}:
\[ c^\circ_m(n,r)=c_{2,1} \frac{a_2^k(T)}{L_2(f,2k-2)}\,\frac{1}{2}
\sum_{0\not=u,v\in {\mf Z} } \frac{b(f, nu^2+ruv+mv^2)}{(nu^2
+ruv+mv^2)^{k-1}}. \]

Here $c_{2,1}$ is a constant, $L_2(f,2k-2)$ a value of the symmetric square
$L$-function attached to $f=\sum_{n=1} b(n,f)e^{2\pi i n\tau}$ and 
$a^k_2(T)$ is the $T$-Fourier coefficient of the degree $2$ Siegel Eisenstein 
series of weight $k$. In this formula, $m$ must be squarefree and
$T=\left(\begin{smallmatrix} n & \frac{r}{2}\\
\frac{r}{2} & m\end{smallmatrix}\right)$ has fundamental discriminant.
We use the well known asymptotic property $a_{2}^k(T) \asymp
\det(T)^{k-\frac{3}{2}}$ and obtain from the \corref{cmnr} that
\[  \sum_{0\not=u,v\in {\mf Z} } \frac{b(f, nu^2+ruv+mv^2)}{(nu^2
+ruv+mv^2)^{k-1}}={O}(\det(T)^{-\frac{k}{2}+\frac{5}{4} +\epsilon}) .\]

This is an improvement to the estimate obtained in \cite{BV}
for this series, using Deligne's bound for $ b(n,f) $.
There we got an estimate ${O}(\det(T)^{-\frac{k}{4}  +\frac{1}{4}+\epsilon})$.
The difference is explained by the oscillations in the Fourier coefficients of $f$.
\end{rmk}

\subsection{The growth of Fourier coefficients of (noncuspidal) Siegel 
modular forms of degree $2$}

A Siegel modular form $F=\sum_T a_F(T)e^{2\pi i trace(TZ)}$ of 
degree $2$ can be decomposed
as $F= a_F(0_2)\cdot E_{2,0} + E_{2,1}(f) + G $
where $E_{2,0}$ is the Siegel-Eisenstein series, $E_{2,1}(f)$
is a Klingen-Eisenstein series attached to an appropriate cusp form 
$f=\sum_n b(f,n)e^{2\pi i n\tau}$
of degree 1 and $G$ is a Siegel cusp form.
This gives a decomposition of Fourier coefficients in three parts.
\begin{equation} A_F(T)= A_F^0(T)+A_F(T)^1+A_G(T)
\label{decomposition}
\end{equation}
The ``middle term'', coming from $E_{2,1}(f)=\sum A_F(T)^1e^{2\pi i trace(TZ)}$
may further be decomposed by considering its Fourier-Jacobi expansion
$$E_{2,1}(f)(Z)=\sum_{m=0}^{\infty} \phi_m(\tau,z)e^{2\pi i m\tau'}$$
and decomposing $\phi_m$ for $m>0$
as $\phi_m= {\mathcal E}_{k,m} +\phi_m^0$, where
${\mathcal E}_{k,m}$ is a suitable Jacobi-Eisenstein series and
$\phi^0_m$ the cuspidal part of $\phi_m$.

Invoking the formula for $\mc E_{k,m}$ from \propref{e2E} we get the following expression for $A_F(T)$ with $T= \left( \begin{smallmatrix} n & r/2 \\ r/2 & m  \end{smallmatrix} \right) \in \Lambda_n^+$ :
\begin{equation} \label{asymp?}
  a_F(T) = a_F(0_2) a_2^k(T) + c^{-1}_k \ut{t^2 \mid m}\sum \alpha_m(t) a_2^k \left( \left( \begin{smallmatrix} n & r/2t \\ r/2t & m/t^2  \end{smallmatrix} \right) \right)  + c_\phi^\circ(n,r) + a_G(T).
\end{equation}
One can compute the cup form $f$ associated to $E^{2,1}(f)$ by the relation
\[  \Phi F = a_F(0_2) E_k +f,  \]
where $\Phi$ is the Siegel's $\Phi$-operator. Using this in \eqref{alphadef} gives us
\[  \alpha_m(t; f) = \ut{\ell \mid t}\sum \mu(\frac{l}{t}) \frac{g_{\Phi F} (m/\ell^2) }{g_k(m/\ell^2)} - c_k  a_F(0_2) \delta_{1,t} = \alpha_m(t; \Phi F) -c_k a_F(0_2) \delta_{1,t} , \]
where $\delta_{1,t}$ is the Kronecker delta function, which equals $1$ if $t=1$ and $0$ otherwise. Putting this back in \eqref{asymp?}, we get that
\begin{align}
a_F(T) &=  a_F(0_2)a_2^k(T) + c^{-1}_k \ut{t^2 \mid m}\sum \alpha_m(t; \Phi F) a_2^k \left( \left( \begin{smallmatrix} n & r/2t \\ r/2t & m/t^2  \end{smallmatrix} \right) \right) -   a_F(0_2)a_2^k(T) \nonumber \\
& \q \q + c_{\phi^\circ}(n,r) + a_G(T) \nonumber \\
&= c^{-1}_k \ut{t^2 \mid m}\sum \alpha_m(t; \Phi F) a_2^k \left( \left( \begin{smallmatrix} n & r/2t \\ r/2t & m/t^2  \end{smallmatrix} \right) \right)  + c_{\phi^\circ}(n,r) + a_G(T). 
\end{align}
We summarise this in the next theorem. 
\begin{thm} \label{genasy} Let the notation be as above. Then
\begin{equation} \label{gene}
 a_F(T) =c^{-1}_k \ut{t^2 \mid m}\sum \alpha_m(t; \Phi F) a_2^k \left( \left( \begin{smallmatrix} n & r/2t \\ r/2t & m/t^2  \end{smallmatrix} \right) \right)  + c_{\phi^\circ}(n,r) + a_G(T).
\end{equation} 
\end{thm}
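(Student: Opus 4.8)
The plan is to organise the decomposition already recorded in the discussion preceding the statement into a single identity. First I would apply the structure theorem \eqref{mkn} to write $F = a_F(0_2)\,E_{2,0}^k + E_{2,1}^k(f) + G$, where $G \in S^2_k$ is cuspidal and $f$ is the degree-one cusp form determined by $\Phi F = a_F(0_2)\,E_k + f$. Reading off the Fourier coefficient at $T = \begin{psm} n & r/2 \\ r/2 & m \end{psm}$ then splits $a_F(T)$ into three pieces: the Siegel--Eisenstein contribution $a_F(0_2)\,a_2^k(T)$, the Klingen--Eisenstein contribution, and the cusp-form contribution $a_G(T)$.

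Next I would treat the Klingen--Eisenstein piece through the Fourier--Jacobi expansion $E_{2,1}^k(f)(Z) = \sum_{m \geq 0} \phi_m(\tau,z)\,e(m\tau')$ together with the Jacobi decomposition $\phi_m = \mc E_{k,m} + \phi^\circ_m$. Since the $(n,r)$-th Fourier coefficient of $\phi_m$ is exactly the Klingen--Eisenstein coefficient of $F$ at $T$, it equals $c_{\mc E_{k,m}}(n,r) + c_{\phi^\circ_m}(n,r)$, and the second summand requires no further manipulation. For the first I would invoke \propref{e2E}, which gives $\mc E_{k,m} = c_k^{-1} \sum_{t^2 \mid m} \alpha_m(t;f)\, e_{k,m/t^2} \mid U_t$, and note that the $(n,r)$-th coefficient of $e_{k,m/t^2} \mid U_t$ is the Siegel--Eisenstein coefficient $a_2^k\left(\left(\begin{smallmatrix} n & r/2t \\ r/2t & m/t^2 \end{smallmatrix}\right)\right)$. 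This yields an explicit formula for $c_{\mc E_{k,m}}(n,r)$ in terms of $a_2^k$ and $\alpha_m(t;f)$.

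The last step is to pass from $f$ to $\Phi F$ and exhibit the cancellation. Because $g_f$ is linear in the Fourier coefficients of $f$, and the coefficients of the normalised Eisenstein series are $a_{E_k}(n) = c_k\,\sigma_{k-1}(n)$ for $n \geq 1$, one checks directly that $g_{E_k}(m) = c_k\,g_k(m)$ and hence $\alpha_m(t;E_k) = c_k\,\delta_{1,t}$; this gives the identity $\alpha_m(t;f) = \alpha_m(t;\Phi F) - c_k\,a_F(0_2)\,\delta_{1,t}$ recorded above the statement. Substituting it into the formula for $c_{\mc E_{k,m}}(n,r)$, the $t=1$ term contributed by $\delta_{1,t}$ produces precisely $-a_F(0_2)\,a_2^k(T)$, which cancels the Siegel--Eisenstein term isolated in the first step; what survives is exactly \eqref{gene}. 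I expect the main difficulty to be bookkeeping rather than conceptual: one must track the effect of $U_t$ on the index $(n,r)$ and verify that the constant $c_k = 2/\zeta(1-k)$ is indeed the proportionality factor between $a_{E_k}(n)$ and $\sigma_{k-1}(n)$, so that the cancellation of the two $a_F(0_2)\,a_2^k(T)$ terms is exact.
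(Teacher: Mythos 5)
Your proposal is correct and follows essentially the same route as the paper: the structure-theorem decomposition $F = a_F(0_2)E^k_{2,0} + E^k_{2,1}(f) + G$, the Fourier--Jacobi splitting $\phi_m = \mc E_{k,m} + \phi^\circ_m$ combined with \propref{e2E}, and the substitution $\alpha_m(t;f) = \alpha_m(t;\Phi F) - c_k a_F(0_2)\delta_{1,t}$ producing the cancellation of the two $a_F(0_2)a_2^k(T)$ terms. In fact you supply slightly more detail than the paper at one point, explicitly checking via $a_{E_k}(n) = c_k\sigma_{k-1}(n)$ and M\"obius summation that $\alpha_m(t;E_k) = c_k\delta_{1,t}$, an identity the paper asserts without computation.
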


\section{Some applications}
%In \cite{BV} we determined the Fourier coefficients $e_m(n,r)$
%of ${\mathcal E}_{k,m}$ explicitly, as long as $m$ is squarefree
%and $r^2-4mn$ is a fundamental discriminant:
%$$e_m(n,r)=a_2^k(T)\frac{b(f,m)}{c_k\cdot\sigma_{k-1}(m)}$$
%with $c_k=\frac{2}{\zeta(1-k)}$.\\   
%Using these properties, we can reformulate
%the decomposition (\ref{decomposition})
%as
%\begin{equation} 
%A_F(T)=A_F(0_2)\cdot a^k_2(T)+ a^2_k(T)\frac{b(f,m)}{c_k\cdot \sigma_{k-1}(m)}
%+ c_m(n,r) + A_G(T)\label{decomposition2}\end{equation}
%This holds for all $T=\left(\begin{array}{cc} n & \frac{r}{2}\\
%\frac{r}{2} & m\end{array}\right)$, provided that $m$ is squarefree
%and $r^2-4mn$ is a fundamental discriminant.
%If in addition $T$ is reduced, then the 
%asymptotic properties of (\ref{decomposition2}) are quite pleasant:
%\begin{eqnarray}
%a^k_2(T) & \asymp & \det(T)^{k-\frac{3}{2}}\\
%c_m(n,r) & \ll & \det(T)^{\frac{k}{2}-\frac{1}{4}}\\
%a_G(T) & \ll & \det(T)^{\frac{k}{2}-\frac{13}{36}}.
%\end{eqnarray}

\subsection{An asymptotic formula for representation numbers }
In \cite{BV} we showed that a decomposition as in \eqref{decomposition}
becomes remarkably simple, if $F$ is a theta series.
Our estimates allow us to improve the
error term in \cite{BV} and to remove the conditions on the matrix $T$ and its minimum. We identify the binary quadratic form $Q(x,y)=nx^2 + rxy +my^2$ with the half-integral matrix $T = \left( \begin{smallmatrix} n & \frac{r}{2}\\
\frac{r}{2} & m\end{smallmatrix} \right)$. We let $A(S,T):=\#\{ X\in {\mf Z}^{2k,2}\, \mid \, \frac{1}{2} X^tSX=T\}$
and $A(S,m)=\#\{X\in {\mf Z}^{2k,1}\,\mid\, \frac{1}{2} X^tSX=m\}$
denote the respective representation numbers. Further, for $m \geq 1$ let $A^*(S,T)$ denote the number of primitive representations of $T$ by $S$.

Let us now recall Siegel's Hauptsatz for even positive unimodular lattices, which would be used in what follows. If $S=S_1, S_2,\ldots , S_h$ be inequivalent representatives of positive even unimodular lattices under the action of the unimodular group $\mrm{GL}(2k,\mf Z)$, define for $i \geq 1$ the theta series attached to $S$ defined as
\[ \theta^{(i)}(S)=\theta^{(i)}(S,Z) = \ut{X \in \mf {Z}^{(2k,i)}}\sum e^{ \pi i \mrm{tr} (X'SX \cdot Z)}, \]
which are elements of $M^i_k$. Then the Fourier expansion of such theta series reads
\[  \theta^{(i)}(S,Z) = \ut{ T \in \Lambda_i} \sum A(S,T) e^{ 2\pi i \mrm{tr} (TZ)} .\]

\begin{thm}[\cite{Fr}] \label{Siegel}
With the above notation,
\[  \ut{ 1 \leq \nu \leq h}\sum m_\nu  \theta^{(i)}(S_\nu) = E^{(i)}_k, \]
where $E^{(i)}_k$ is the Siegel Eisenstein series of degree $i$ and weight $k$; 
\[m_\nu = A(S_\nu, S_\nu)^{-1} (\ut{ 1 \leq \nu \leq h}\sum A(S_\nu, S_\nu)^{-1}  )^{-1}. \]
\end{thm}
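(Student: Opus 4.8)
The plan is to prove \thmref{Siegel} by induction on the degree $i$, combining the classical theta transformation formula, the compatibility of theta series with the Siegel $\Phi$-operator, and a comparison of Fourier coefficients via local densities. Throughout I write $\Theta^{(i)} := \ut{1 \leq \nu \leq h}\sum m_\nu \theta^{(i)}(S_\nu)$ for the mass-weighted genus average, so that the assertion is $\Theta^{(i)} = E^{(i)}_k$.

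First I would check that each $\theta^{(i)}(S_\nu)$ is a genuine element of $M^i_k$. Since $S_\nu$ is even and unimodular it is self-dual with $\det S_\nu = 1$ (forcing the rank $2k$ to be divisible by $8$), so Poisson summation applied to the defining lattice sum produces invariance under the standard generators of $\mrm{Sp}(i,\mf Z)$, and even-unimodularity removes any multiplier system. Hence $\Theta^{(i)} \in M^i_k$ as well. The base case $i=0$ of the induction is then immediate: $\theta^{(0)}(S_\nu) = 1$ and $E^{(0)}_k = 1$, while the normalization of the weights gives $\ut{\nu}\sum m_\nu = 1$, so $\Theta^{(0)} = 1 = E^{(0)}_k$.

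For the inductive step I would use that the $\Phi$-operator commutes both with the genus average and with the Eisenstein construction. Restricting a theta series to the rank-$(i-1)$ boundary retains exactly the lattice vectors whose last column vanishes, giving $\Phi(\theta^{(i)}(S_\nu)) = \theta^{(i-1)}(S_\nu)$, and by definition $\Phi(E^{(i)}_k) = E^{(i-1)}_k$. Writing $D^{(i)} := \Theta^{(i)} - E^{(i)}_k$, the inductive hypothesis $\Theta^{(i-1)} = E^{(i-1)}_k$ yields $\Phi(D^{(i)}) = 0$; since the kernel of $\Phi$ on $M^i_k$ is precisely the space of cusp forms, $D^{(i)}$ is a cusp form of degree $i$, hence supported on $\Lambda_i^+$.

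It remains to show $D^{(i)} = 0$, and this is the analytic heart of the theorem and the main obstacle. I would compare the $T$-th Fourier coefficients for $T \in \Lambda_i^+$: on the theta side the coefficient of $\Theta^{(i)}$ at $T$ is the mass-weighted average representation number $\ut{\nu}\sum m_\nu A(S_\nu, T)$, whereas the coefficient of $E^{(i)}_k$ is read off from the explicit Fourier expansion of the Siegel Eisenstein series. The hard part is to show that both equal the same product of local densities $\ut{p \leq \infty}\prod \beta_p(S,T)$ — that is, that the weighted average representation number factors into local densities (Siegel's mass computation) while the Eisenstein coefficients are the matching singular series. This Siegel–Weil identity forces every Fourier coefficient of $D^{(i)}$ on $\Lambda_i^+$ to vanish, and since $D^{(i)}$ is a cusp form it is supported there, so $D^{(i)} = 0$ and the theorem follows. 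In the present paper this classical statement is simply quoted from \cite{Fr}, so in practice one only cites it rather than reproducing the local density computation.
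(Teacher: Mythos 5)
You should first note that the paper does not prove this statement at all: it is Siegel's Hauptsatz for even unimodular lattices, quoted with the citation \cite{Fr}, so there is no internal proof to compare against. Judged on its own terms, your proposal is a faithful outline of the classical proof architecture, and its reduction steps are all correct: modularity of each $\theta^{(i)}(S_\nu)$ via Poisson summation (with $2k \equiv 0 \bmod 8$), the normalization $\sum_\nu m_\nu = 1$ settling the degree-zero case, the compatibilities $\Phi(\theta^{(i)}(S_\nu)) = \theta^{(i-1)}(S_\nu)$ and $\Phi(E^{(i)}_k) = E^{(i-1)}_k$, and the fact that the kernel of $\Phi$ on $M^i_k$ is the space of cusp forms, so that the difference $D^{(i)}$ is determined by its coefficients at $T \in \Lambda_i^+$. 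This reduction does buy something real: it localizes the problem to nondegenerate $T$.

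However, as a proof the proposal has a gap exactly where you yourself place "the analytic heart." The assertion that $\sum_\nu m_\nu A(S_\nu, T)$ and the $T$-th Fourier coefficient of $E^{(i)}_k$ both equal the same product of local densities $\prod_{p} \beta_p(S,T)$ is not a lemma one may appeal to in order to prove the theorem --- it \emph{is} the theorem, in its standard equivalent formulation. Your write-up names this step ("Siegel's mass computation," "the matching singular series") without supplying either half: neither the computation of the Eisenstein Fourier coefficients as a singular series, nor Siegel's identity for the weighted representation numbers, is carried out or reduced to anything simpler, so the argument is circular at its core unless both are imported from the literature --- which is precisely what the paper does by citing \cite{Fr}. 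Your closing sentence concedes this, and that concession is the mathematically correct judgment: in the context of this paper the "proof" is the citation, and the scaffolding you build around it, while correct, does not shorten the distance to the actual content.
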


We would need a lemma about the representation numbers $A(S,T)$ and consequently about certain sum of the Fourier coefficients $a_2^k(T)$. We write, as is customary, $T=\left( \begin{smallmatrix} n & \frac{r}{2}\\
\frac{r}{2} & m \end{smallmatrix} \right) \in \Lambda_2^+$.

\begin{lem} \label{saviour}
Let the notation be as above. Then,
\begin{equation} \label{save}
 \ut{t^2 \mid m, \, t \mid r}\sum \mu(t) A(S, \left( \begin{smallmatrix} n & \frac{r}{2t}\\
\frac{r}{2t} & \frac{m}{t^2} \end{smallmatrix} \right) )  = \# \{  X,Y \in \mf{Z}^{k,1} \vert \left( X,Y \right)'S \left(X,Y \right) = T, \, Y \text{ primitive} \}. 
\end{equation}
\end{lem}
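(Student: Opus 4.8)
The plan is to read each summand $A(S,T_t)$, with $T_t=\begin{psm} n & \frac{r}{2t}\\ \frac{r}{2t} & \frac{m}{t^2}\end{psm}$, as a count of representations of the original form $T$ subject to a divisibility constraint, and then to isolate the primitive representations by M\"obius inversion on the content of the second vector.

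First I would set up a bijection at the level of individual representations. Write a representation of $T_t$ as a pair of columns $(X,X_2)\in \mf Z^{2k,1}\times \mf Z^{2k,1}$ with $\tfrac12 (X,X_2)'S(X,X_2)=T_t$, i.e. $\tfrac12 X'SX=n$, $\tfrac12 X_2'SX_2=m/t^2$ and $X'SX_2=r/t$. Setting $Y:=tX_2$ then gives $\tfrac12 Y'SY=t^2\cdot \tfrac{m}{t^2}=m$ and $X'SY=t\cdot \tfrac{r}{t}=r$, so $(X,Y)$ represents $T$ and, moreover, $t$ divides every entry of $Y$. Conversely any representation $(X,Y)$ of $T$ with $t\mid Y$ produces $X_2=Y/t\in \mf Z^{2k,1}$ and hence a representation of $T_t$. (Here $T_t\in\Lambda_2^+$, since $\det T_t=\det T/t^2>0$, so $A(S,T_t)$ is meaningful precisely when $t^2\mid m$ and $t\mid r$, which is exactly the range of summation.) Thus
\[
A(S,T_t)=\#\{(X,Y)\in(\mf Z^{2k,1})^2 \mid \tfrac12 (X,Y)'S(X,Y)=T,\ t\mid Y\}.
\]

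Next I would stratify these representations by the content $d:=\mrm{gcd}(Y)$ of the second column, writing $Y=dY_0$ with $Y_0$ primitive. Since $S$ is even, $\tfrac12 Y_0'SY_0\in\mf Z$, and the relations $\tfrac12 Y'SY=m$, $X'SY=r$ force $d^2\mid m$ and $d\mid r$. Writing $N(d)$ for the number of representations $(X,Y)$ of $T$ with $\mrm{gcd}(Y)=d$, the constraint $t\mid Y$ is equivalent to $t\mid d$, so $A(S,T_t)=\sum_{t\mid d}N(d)$. Substituting and interchanging the order of summation,
\[
\sum_{\substack{t^2\mid m\\ t\mid r}}\mu(t)\,A(S,T_t)=\sum_{d}N(d)\sum_{t\mid d}\mu(t)=N(1),
\]
because every $d$ occurring in the stratification already satisfies $d^2\mid m$ and $d\mid r$, so the conditions $t^2\mid m,\ t\mid r$ are automatic for $t\mid d$ and the inner sum collapses to $[d=1]$. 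As $N(1)$ is exactly the number of representations $(X,Y)$ of $T$ with $Y$ primitive, this is the asserted right-hand side.

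The one point needing care --- the main obstacle --- is the bookkeeping of the content: one must check that \emph{every} $d$ appearing in the stratification satisfies $d^2\mid m$ and $d\mid r$ (using evenness of $S$), so that the divisibility conditions on $t$ in the outer sum become redundant and the M\"obius sum telescopes cleanly to $[d=1]$, with no terms lost or introduced in passing between the constraint $t\mid Y$ and the content. Everything else is a direct change of variables.
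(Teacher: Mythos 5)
Your proof is correct and follows essentially the same route as the paper: both rest on the change of variables $Y = tY_0$, stratifying representations of $T$ by the content of the second column (noting, via evenness of $S$, that any content $d$ automatically satisfies $d^2 \mid m$ and $d \mid r$), combined with a M\"obius argument. The only cosmetic difference is that the paper states the forward decomposition $\sum_{t^2 \mid m,\, t \mid r} A(S,T_t)^{\sharp} = A(S,T)$ and then invokes M\"obius inversion, whereas you read each $A(S,T_t)$ as a count of representations of $T$ constrained by $t \mid Y$ and telescope the M\"obius sum directly via $\sum_{t \mid d}\mu(t) = [d=1]$.
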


\begin{proof}
 then we see that
\[  \left( \begin{matrix} X'SX & X'SY\\
Y'SX & Y'SY \end{matrix} \right) =  \left( \begin{matrix} n & r/2 \\
r/2 & m\end{matrix} \right) .\]
Considering the above equation according to the content of the vector $Y$, we get, after letting the right hand side of \eqref{save} as $A(S,T)^\sharp$ that
\[  \sum_{t^2 \mid m,\, t \mid r} A(S, \left( \begin{smallmatrix} n & r/2t \\ r/2t & m/t^2  \end{smallmatrix} \right)  )^\sharp = A(S,T). \]
The lemma follows from this by M\"obius inversion.
\end{proof}

We shall keep the following notation in the rest of this section.
\begin{equation} \label{M}
\begin{aligned}
  \mbb{M}=\mbb{M}(S,T): &= \ut{t^2 \mid m,\, t \mid r}\sum \alpha_m(t; \theta^1(S, \cdot )) a_2^k \left( \left( \begin{smallmatrix} n & r/2t \\ r/2t & m/t^2  \end{smallmatrix} \right) \right), \text{ and recall} \\
\alpha_m(t; \theta^1(S, \cdot )) &= \ut{\ell \mid t}\sum \mu(\frac{t}{\ell} )\frac{A^*(S,m/\ell^2)}{g_k(m/\ell^2)}. 
\end{aligned}
\end{equation}

\begin{thm}  \label{repno}
Let $S$ be an even, unimodular, positive definite
quadratic form in $2k$ variables and let
$T= \left( \begin{smallmatrix} n & \frac{r}{2}\\
\frac{r}{2} & m\end{smallmatrix} \right)$ be a reduced binary quadratic form.
Then given any $\epsilon>0$ and for all $k \geq 4$ the following asymptotic formula holds,
\begin{equation} \label{asymp-f}
A(S, T) = c^{-1}_k \mbb{M}(S,T)
+O(\det(T)^{\frac{k}{2}-\frac{1}{4}+\epsilon}),
\end{equation} \label{ineq}
where $c_k = 2/\zeta(1-k)$, $g_k(m)$ is as defined in \eqref{alphadef}. Moreover, one has
\begin{equation} \label{mst}
 \mbb{M}(S,T) \gg_k \frac{A(S, \min(T))\det(T)^{k-3/2}}{\sigma_{k-1}(\min(T)) \log( \min(T))} .
\end{equation} 
\end{thm}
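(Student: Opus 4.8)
The plan is to specialize \thmref{genasy} to the degree-two theta series $F=\theta^{(2)}(S)$ and then extract the two assertions separately: \eqref{asymp-f} is essentially a bookkeeping consequence of \thmref{genasy}, while \eqref{mst} is the genuinely arithmetic statement and is where the work lies. For \eqref{asymp-f}, take $F=\theta^{(2)}(S)\in M^2_k$, so that $a_F(T)=A(S,T)$ and, by compatibility of the Siegel $\Phi$-operator with theta series, $\Phi F=\theta^{(1)}(S)$ with $a_{\Phi F}(n)=A(S,n)$. First I would check that the two notions of $\alpha_m$ agree: since every representation is a square multiple of a primitive one, $A(S,n)=\sum_{d^2\mid n}A^*(S,n/d^2)$, so M\"obius inversion gives $g_{\Phi F}(n)=\sum_{d^2\mid n}\mu(d)A(S,n/d^2)=A^*(S,n)$; hence $\alpha_m(t;\Phi F)=\alpha_m(t;\theta^1(S,\cdot))$ in the sense of the definition of $\mbb M$. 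Plugging into \eqref{gene} and noting that $a_2^k$ is supported on half-integral matrices (so the terms with $t\nmid r$ vanish and the constraint $t\mid r$ in the definition of $\mbb M$ is automatic), the main term becomes exactly $c_k^{-1}\mbb M(S,T)$.

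It then remains to bound the two remaining terms of \eqref{gene}. The cuspidal Fourier--Jacobi contribution is controlled by \corref{cmnr}: since $T$ is reduced, $c^\circ_m(n,r)\ll (4mn-r^2)^{k/2-1/4+\epsilon}=\det(T)^{k/2-1/4+\epsilon}$. For the genuine cusp form $G$ I would invoke the classical Kitaoka-type estimate $a_G(T)\ll \det(T)^{k/2-1/4+\epsilon}$ (equivalently: decompose $G$ into Fourier--Jacobi coefficients, which are cusp forms, bound their Petersson norms by $O(m^k)$ via the cuspidal case of \thmref{phi0-bd}, and apply \corref{cmnr} once more). Adding the two contributions yields \eqref{asymp-f}.

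For \eqref{mst} I would reorganize $\mbb M$ by summing over the M\"obius variable inside $\alpha_m$: writing $t=\ell j$ with $\ell^2\mid m$ and $\ell\mid r$ gives
\[ \mbb M(S,T)=\sum_{\ell^2\mid m,\ \ell\mid r}\frac{A^*(S,m/\ell^2)}{g_k(m/\ell^2)}\sum_{j}\mu(j)\,a_2^k(T_{\ell j}),\qquad T_{\ell j}=\begin{psm} n & \frac{r}{2\ell j}\\ \frac{r}{2\ell j} & \frac{m}{(\ell j)^2}\end{psm}. \]
The key inputs are the positivity and near-multiplicativity of the degree-two Siegel--Eisenstein coefficients: $a_2^k(T')>0$, $a_2^k(T')\asymp\det(T')^{k-3/2}$ up to bounded local densities, and the inner M\"obius sum $\sum_j\mu(j)a_2^k(T_{\ell j})$ is itself nonnegative and of size $\asymp\det(T_\ell)^{k-3/2}=(\det(T)/\ell^2)^{k-3/2}$. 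Using $g_k(m/\ell^2)\le\sigma_{k-1}(m)$ and collecting the (positive) contributions over $\ell$, one recovers a positive multiple of $\det(T)^{k-3/2}/\sigma_{k-1}(m)$ times a weighted sum of the $A^*(S,m/\ell^2)$; the identity $A(S,\min(T))=\sum_{\ell^2\mid m}A^*(S,m/\ell^2)$ (recall $\min(T)=m$ for reduced $T$), together with a crude bound on the arithmetic weights, then produces the factor $A(S,\min(T))$ at the cost of the stated $\log(\min(T))$ loss.

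The main obstacle is the sign. Because both $\alpha_m$ and the inner sum carry M\"obius factors, a naive lower bound through the crude size estimate $a_2^k\asymp\det^{k-3/2}$ is too lossy: it discards exactly the local-density fluctuations that make $a_2^k(T_{\ell j})$ large on imprimitive arguments, and those fluctuations are what encode the total (rather than merely primitive) count $A(S,\min(T))$. I therefore expect the technical heart to be establishing the positivity and the correct order of the M\"obius-weighted sums of Eisenstein coefficients, i.e.\ an explicit handle on the local densities $\beta_p(T_{\ell j})$, and then tracking the arithmetic weights carefully enough that the passage from $A^*(S,\cdot)$ to $A(S,\min(T))$ costs no more than a factor $\log(\min(T))$.
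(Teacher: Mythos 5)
Your proof of \eqref{asymp-f} is fine and coincides with the paper's: take $F=\theta^{(2)}(S)$, so that $\Phi F=\theta^{(1)}(S)$ and $g_{\Phi F}(n)=A^*(S,n)$ by M\"obius inversion, feed this into \eqref{gene} of \thmref{genasy}, and bound the cuspidal contributions via \corref{cmnr} (using that $T$ is reduced) and Kohnen's estimate for $a_G(T)$. The problem is \eqref{mst}. After rewriting $\mbb M$ as in \eqref{Main}, you assert that each inner M\"obius-weighted sum $\sum_j\mu(j)\,a_2^k(T_{\ell j})$ is nonnegative and of size $\asymp\det(T)^{k-3/2}/\ell^{2k-3}$, and you yourself concede that establishing this is ``the technical heart,'' to be done by ``an explicit handle on the local densities'' --- which you never provide. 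This is not a deferrable verification: it is the main content of the theorem, since a priori the M\"obius signs could cause cancellation, and the termwise estimate $a_2^k\asymp\det^{k-3/2}$ carries the wrong sign structure to produce any lower bound. As written, the proof of \eqref{mst} is incomplete.

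What you are missing is the paper's mechanism for producing positivity, which avoids local densities entirely. Step one is \lemref{saviour}: the M\"obius-weighted sum of \emph{representation numbers} $\sum_{t}\mu(t)A\bigl(S,\begin{psm} n & r/2t\\ r/2t & m/t^2\end{psm}\bigr)$ equals the number of representations $(X,Y)'S(X,Y)=T$ whose second column $Y$ is a primitive vector, and hence is at least the number $A^*(S,T)$ of primitive representations (a primitive matrix has primitive columns, the columns being independent since $T>0$). Step two: summing this inequality over representatives $S_1,\dots,S_h$ of the genus with the mass weights $m_\nu$ and invoking Siegel's Hauptsatz (\thmref{Siegel}) converts it into an inequality between Eisenstein coefficients, $a_2^k(T)^\sharp\ge a_2^k(T)^*$ in the paper's notation. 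Step three: Kitaoka's asymptotic $a_2^k(T)^*\asymp_k\det(T)^{k-3/2}$ for primitive coefficients then gives the required positive lower bound for every inner sum. Only after this are your remaining manipulations --- the inequality $\sum_i a_i/b_i\ge\bigl(\sum_i a_i\bigr)/\bigl(\sum_i b_i\bigr)$, the identity $A(S,m)=\sum_{\ell^2\mid m}A^*(S,m/\ell^2)$, the bound $\sum_{\ell^2\mid m}\ell^{-1}\ll\log m$, and $\min(T)=m$ for reduced $T$ --- justified, and they do reproduce the paper's concluding steps. So your skeleton is right, but the decisive idea (genus averaging via the Hauptsatz to turn a counting inequality into a coefficient inequality) is absent, and the local-density route you propose in its place is not carried out.
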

Here the implied constants depend only on $k$ and $\epsilon$, $\min(T)$ denotes the minimum of $T$: $\min(T):= \{ x'Tx \, \vert \, x \in \mf{Z}^{2,1} \setminus \{0\}  \}$. The error term in the corresponding statement in \cite{BV} had $\frac{3k}{4}-\frac{5}{4}+\epsilon$ in the exponent, so the above is an improvement in this regard. We record an immediate corollary.

\begin{cor} \label{repno-nonzero}
Keep the notation as in \thmref{repno}. For all $T$ such that $A(S, \min(T)) \neq 0$ and $\epsilon>0$, one has the lower bound $A(S,T) \gg_{\epsilon, k} \det(T)^{k-3/2-\epsilon}$.
\end{cor}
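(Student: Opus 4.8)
The plan is to play the asymptotic formula \eqref{asymp-f} off against the lower bound \eqref{mst} of \thmref{repno}; the only genuinely new ingredient I need is a lower bound for the one-variable representation number $A(S,\min(T))$ of the correct order $\sigma_{k-1}(\min(T))$, rather than the trivial $A(S,\min(T))\ge 1$. Let me first record why positivity alone is not enough. As $T$ is reduced, Hermite's inequality gives $\min(T)\ll\det(T)^{1/2}$, so $\sigma_{k-1}(\min(T))$ may be as large as $\det(T)^{(k-1)/2+\epsilon}$; inserting only $A(S,\min(T))\ge 1$ into \eqref{mst} would yield merely $\mbb M(S,T)\gg\det(T)^{(k-2)/2-\epsilon}$, far below the target exponent $k-\tfrac32$. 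The crux is therefore that a represented minimum must be represented \emph{many} times.

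To quantify this I would use the shape of the theta series. One has $\theta^{(1)}(S)\in M^1_k$ with constant term $A(S,0)=1$, and the classical splitting $M^1_k=\mf C\,E^{(1)}_k\oplus S^1_k$ (with $E^{(1)}_k$ normalised to constant term $1$) pins the Eisenstein component of $\theta^{(1)}(S)$ down to $E^{(1)}_k$ itself. Comparing Fourier coefficients gives
\[
A(S,m)=\gamma_k\,\sigma_{k-1}(m)+O\!\left(m^{k/2}\right),\qquad \gamma_k=-\tfrac{2k}{B_k},
\]
where the error is the $m$-th coefficient of a weight-$k$ cusp form, bounded by the Hecke (or Deligne) estimate. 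Since $A(S,m)\ge 0$ while $\sigma_{k-1}(m)\ge m^{k-1}$ dominates $m^{k/2}$ for $k\ge 4$, necessarily $\gamma_k>0$ (it cannot vanish, as $A(S,0)=1$ forbids $\theta^{(1)}(S)$ from being cuspidal). Hence beyond a threshold $M_0=M_0(k,S)$ the Eisenstein term dominates and $A(S,m)\ge\tfrac{\gamma_k}{2}\sigma_{k-1}(m)$, whereas for $m\le M_0$ one has $\sigma_{k-1}(m)=O_{k,M_0}(1)$, so that $A(S,m)\ne 0$ forces $A(S,m)\ge 1\gg_{k,S}\sigma_{k-1}(m)$. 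Either way,
\[
A(S,\min(T))\ne 0\ \Longrightarrow\ A(S,\min(T))\gg_{k,S}\sigma_{k-1}(\min(T)).
\]

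Granting this, the corollary drops out. Substituting the last implication into \eqref{mst}, the factors $\sigma_{k-1}(\min(T))$ cancel, and since $\log\min(T)\ll\log\det(T)\ll_\epsilon\det(T)^\epsilon$ (again via $\min(T)\ll\det(T)^{1/2}$), I get
\[
\mbb M(S,T)\gg_{k,S}\frac{\det(T)^{k-3/2}}{\log\min(T)}\gg_{k,S,\epsilon}\det(T)^{k-3/2-\epsilon}.
\]
Feeding this into \eqref{asymp-f} and observing that $k-\tfrac32>\tfrac{k}{2}-\tfrac14$ for $k\ge 4$, the main term $c_k^{-1}\mbb M(S,T)$ swamps the error $O(\det(T)^{k/2-1/4+\epsilon})$, which yields $A(S,T)\gg_{\epsilon,k}\det(T)^{k-3/2-\epsilon}$, as required.

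The single step carrying real content is the passage from non-vanishing of $A(S,\min(T))$ to a lower bound of order $\sigma_{k-1}(\min(T))$: without the Eisenstein/cusp decomposition of $\theta^{(1)}(S)$ and the sub-Eisenstein size of its cuspidal coefficients one loses a whole factor of $\det(T)^{(k-1)/2}$, and the exponent $k-\tfrac32$ becomes unreachable. All the rest is bookkeeping with the two estimates already furnished by \thmref{repno}.
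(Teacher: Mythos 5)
Your proposal is correct and takes essentially the same approach as the paper: the paper's proof likewise writes $\theta^{(1)}(S,\cdot)=E_k+g_k$ with $g_k$ cuspidal and uses Hecke's bound on the coefficients of $g_k$ to conclude that the ratio $A(S,\min(T))/\sigma_{k-1}(\min(T))$, when non-zero, is bounded below by a constant, after which \eqref{mst} and \eqref{asymp-f} yield the claim exactly as in your bookkeeping. The only point you leave implicit is the paper's preliminary remark that both sides of the inequality are invariant under $\gltwo$, so that it suffices to treat reduced $T$ (as you do); your extra details (positivity of $\gamma_k$ via positivity of representation numbers, the threshold $M_0$, the small-$\min(T)$ case) simply flesh out what the paper compresses into one line.
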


\begin{proof}[Proof of the corollary]
The corollary follows from \thmref{repno} by noting that 

$(i)$ both sides of the inequality in the corollary are invariant under $\gltwo$, so that it is enough to prove it for $T$ reduced; and

$(ii)$ the ratio $A(S, \min(T)) / \sigma_{k-1}(\min(T))$, if non-zero, is bounded below by a constant depending only on $k$. 

This in turn follows if we write $\theta^{(1)}(S,\cdot) = E_k + g_k$ for some cusp form $g_k$ depending only on $k$ and $S$ and use Hecke'e bound for Fourier coefficients of $g_k$. 
\end{proof}

\begin{proof}[Proof of \thmref{repno}]
The proof follows from \eqref{gene}. First of all, the `error terms' coming from the cuspidal contribution in \eqref{gene} is atmost $O(\det(T)^{\frac{k}{2}-\frac{1}{4}+\epsilon})$, this follows from the estimates from \eqref{cmnr} (since $T$ is reduced) and \cite{Ko}:
\begin{eqnarray}
c_{\phi^\circ}(n,r) & \ll_k & \det(T)^{\frac{k}{2}-\frac{1}{4}}\\
a_G(T) & \ll_k & \det(T)^{\frac{k}{2}-\frac{13}{36}}
\end{eqnarray}
Let us note here that here $F= \theta^2(S,Z)$ so that $\Phi F = \theta^1(S,\tau)$.
Moreover, recalling the definition of $\alpha_m(t; \theta^1(S, \cdot))$ from \eqref{alphadef} one checks easily that it equals the expression given in the theorem. The remaining point is to check that \eqref{asymp-f} is actually an asymptotic formula if $\det(T)$ are sufficiently large.

For ease of notation, let us consider the `main term' $\mbb{M}$ (from \eqref{M}) and rewrite it as
\begin{equation} \label{Main}
 \mbb{M} = \ut{\ell^2 \mid m}\sum \frac{A^*(S,m/\ell^2)}{g_k(m/\ell^2)} \ut{t^2 \mid m/\ell^2, \, t \mid r/\ell}\sum \mu(t) a_2^k \left( \left( \begin{smallmatrix} n & r/2\ell t \\ r/2\ell t & m/\ell^2 t^2  \end{smallmatrix} \right) \right). 
\end{equation}  
As in the proof of \lemref{saviour}, let us put
\[  A(S,T)^\sharp=\ut{t^2 \mid m, \, t \mid r/\ell}\sum \mu(t) A(S, \left( \begin{smallmatrix} n & {r/2t}\\
{r/2t} & {m/t^2} \end{smallmatrix} \right) ) , \q     a_2^k(T)^\sharp = \ut{t^2 \mid m, \, t \mid r}\sum \mu(t) a_2^k \left( \left( \begin{smallmatrix} n & r/2 t \\ r/2 t & m/ t^2  \end{smallmatrix} \right) \right).\]

Now from \lemref{saviour} it follows trivially (by a counting argument) that
\begin{equation*} 
A(S,T)^\sharp \geq A^*(S,T) ,
\end{equation*}
where recall that $A(S,T)^*$ denotes the number of primitive representations of $T$ by $S$, i.e., 
\[  A^*(S,T) = \# \{ \mc X \in Z^{(2k,2)} \vert \, S[\mc X]=T, \, \mc X \text{ primitive} \}. \]

Summing the above inequality over the inequivalent unimodular matrices $S_1=S,S_2,\ldots,S_h$ and then using Siegel's Hauptsatz \thmref{Siegel} for $T$, we conclude that
\begin{equation}  \label{*}
 a_2^k(T)^\sharp \geq a_2^k(T)^*
\end{equation}
where $a_2^k(T)^*$ denotes the $T$-th `primitive' Fourier coefficient of $E^k_{2,0}$ defined e.g., in \cite{BR} by means of the formula
\[  a_2^k(T) = \ut { \substack{  G \in \gltwo \backslash M_2(\mf Z), \, \det(G)\neq 0 \\ T[G^{-1}] \in \Lambda_2^+  } }\sum a^k_2(T[G^{-1}])^* ,\]
which arises commonly in the theory. For the theta series above, the above two notions of `primitive' Fourier coefficients coincide.

For us, we only need an asymptotic formula for such a quantity; and indeed it is known \cite{Ki1} that for any $T \in \Lambda^+_2$ one has,
\[  a_2^k(T)^* \asymp_k \det(T)^{k - 3/2},   \]
with the implied constant depending only on $k$. Then \eqref{*} shows that
\begin{equation} \label{astbd}
A(S,T)^\sharp \gg_k \det(T)^{k - 3/2}.
\end{equation}

In all, we now have from \eqref{Main} using positivity of the quantities involved along with \eqref{astbd} with $T$ replaced by $\begin{psmallmatrix} n & r/2 \ell \\ r/2 \ell & m / \ell^2  \end{psmallmatrix} $ that
\[  \mbb{M} \gg_k  \ut{\ell^2 \mid m}\sum \frac{A^*(S,m/\ell^2)}{g_k(m/\ell^2)} \cdot \frac{\det(T)^{k-3/2}}{\ell^{2k-3}} \gg_k \frac{A^*(S,m)}{g_k(m)} \cdot \det(T)^{k-3/2}.\]

We may remove the primitivity condition from the above bound. In fact by using the elementary inequality
\[ \frac{a_1}{b_1} + \frac{a_2}{b_2} + \cdots + \frac{a_j}{b_j} \geq \frac{a_1+b_1 + \cdots+a_j}{b_1+b_2+\cdots + b_j}  \]
with $a_i \geq 0$ and $b_i>0$ for all $i$, we can infer
\[ \mbb{M} \gg_k  \frac{ \ut{\ell^2 \mid m} \sum  A^*(S,m/\ell^2) \det(T)^{k-3/2} } {   \ut{\ell^2 \mid m} \sum  g_k(m/\ell^2) \ell^{2k-3}  }  \gg_k \frac{A(S,m) \det(T)^{k-3/2} }{m^{k-1} \ut{t^2 \mid m}\sum t^{-1}}   \gg_k \frac{A(S,m) \det(T)^{k-3/2} }{m^{k-1} \log(m)},  \]
and the proof is complete because we can assume that $m= \min(T)$, as $T$ is reduced.
\end{proof}

\subsection{Determination by Fundamental Fourier coefficients} In \cite{saha}, a remarkable result was proved: all Siegel cusp forms $F$ of degree $2$ are determined by $a_F(T)$ such that $-\det(2T)$ is a fundamental discriminant. In the same paper, it was asked 
(cf. \cite[remark~2.6]{saha}) whether the same result is true for  all of $M^2_k$, and it was indicated that similarity of  growth properties for Fourier coefficients of Siegel-and Klingen-Eisenstein series makes the situation rather delicate. With our work, we can settle this issue without working with explicit description for the Fourier coefficients of Eisenstein series. But first, let us prove a lemma about an $\Omega$-result on the Fourier coefficients of a cusp form in $S_k$.

\begin{lem} \label{omega}
Let $f \in S_k$ be non-zero. Then
\[  \limsup_{m \to \infty, \, \, m \,\, \mrm{square-free}}  |a_f(m)|/m^{(k-1)/2} >0. \]
\end{lem}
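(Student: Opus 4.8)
The plan is to restrict attention to prime arguments $m=p$, which are automatically square-free, and to prove the stronger assertion $\limsup_{p\to\infty}|a_f(p)|/p^{(k-1)/2}>0$. First I would fix a basis $f_1,\dots,f_d$ of $S_k$ consisting of normalized Hecke eigenforms and write $f=\sum_{i=1}^d c_i f_i$ with not all $c_i$ zero. Writing $\lambda_i(n):=a_{f_i}(n)/n^{(k-1)/2}$ for the Deligne-normalized eigenvalues (so that $|\lambda_i(p)|\le 2$), one has $a_f(p)/p^{(k-1)/2}=\sum_i c_i\lambda_i(p)$, and it suffices to show that this quantity does not tend to $0$ along the primes.

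Next I would study the prime second moment $\sum_{p\le X}\bigl|\sum_i c_i\lambda_i(p)\bigr|^2=\sum_{i,j}c_i\overline{c_j}\sum_{p\le X}\lambda_i(p)\overline{\lambda_j(p)}$ and show it is asymptotic to $\bigl(\sum_i|c_i|^2\bigr)\pi(X)$. The diagonal terms are handled by Rankin--Selberg: the Dirichlet series $\sum_n\lambda_i(n)^2 n^{-s}$ factors as $\zeta(s)L(\mathrm{sym}^2 f_i,s)/\zeta(2s)$ and has a simple pole at $s=1$ with residue $L(\mathrm{sym}^2 f_i,1)/\zeta(2)>0$; extracting the prime contribution yields $\sum_{p\le X}\lambda_i(p)^2\sim\pi(X)$. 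For the off-diagonal terms $i\ne j$ I would invoke that, for distinct level-one eigenforms, the Rankin--Selberg $L$-function $L(f_i\times f_j,s)$ is entire and non-vanishing on $\Re s=1$, whence $\sum_{p\le X}\lambda_i(p)\overline{\lambda_j(p)}=o(\pi(X))$ by a prime-number-theorem argument applied to this $L$-function.

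Combining these, the average of $|a_f(p)|^2/p^{k-1}$ over primes $p\le X$ tends to $\sum_i|c_i|^2>0$, so necessarily $\limsup_p |a_f(p)|^2/p^{k-1}\ge \sum_i|c_i|^2>0$; since each prime is square-free, the lemma follows a fortiori. If one prefers to argue directly over all square-free integers rather than over primes, the same scheme applies with the Euler products $\prod_p\bigl(1+\lambda_i(p)\overline{\lambda_j(p)}p^{-s}\bigr)$ replacing the prime sums, the diagonal product retaining a pole at $s=1$ (its local factors have mean $1+p^{-s}+\cdots$) and the off-diagonal product being holomorphic there.

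I expect the main obstacle to be the off-diagonal estimate $\sum_{p\le X}\lambda_i(p)\overline{\lambda_j(p)}=o(\pi(X))$: this is precisely where genuine input from the analytic theory of $\mathrm{GL}_2\times\mathrm{GL}_2$ Rankin--Selberg $L$-functions enters, namely their holomorphy for $f_i\ne f_j$ together with non-vanishing on the line $\Re s=1$ (Jacquet--Shalika, Shahidi), while the diagonal analysis rests on the classical non-vanishing of $L(\mathrm{sym}^2 f_i,s)$ at $s=1$. Everything else is bookkeeping with the eigenbasis together with a routine contradiction: were the limsup zero, then $|a_f(p)|=o(p^{(k-1)/2})$ and the prime second moment would be $o(\pi(X))$, contradicting the positive main term just established.
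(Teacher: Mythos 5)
Your proposal is correct, but it takes a genuinely different route from the paper. The paper disposes of the lemma in a few lines by quoting a second-moment result of Anamby--Das (\cite{AD}, proof of Proposition~5.9): for some fixed square-free $M$ one has $\sum_{m\le X,\ m\ \mathrm{square\text{-}free},\ (m,M)=1}|a_f(m)|^2m^{-(k-1)}>B_fX$, and the positivity of the limsup over square-free $m$ is immediate from this. You instead restrict to primes and prove the stronger assertion $\limsup_{p}|a_f(p)|/p^{(k-1)/2}>0$ by decomposing $f$ in a Hecke eigenbasis and computing the prime second moment: the diagonal via the pole of $\zeta(s)L(\mathrm{sym}^2f_i,s)/\zeta(2s)$ at $s=1$, the off-diagonal via entireness and non-vanishing on $\Re s=1$ of $L(f_i\times f_j,s)$ for $i\ne j$ --- all standard facts for level-one eigenforms (whose coefficients are moreover real, so the complex conjugates are harmless). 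One small imprecision: for the diagonal you also need non-vanishing of $L(\mathrm{sym}^2f_i,s)$ on the whole line $\Re s=1$, not merely at $s=1$, since extracting $\sum_{p\le X}\lambda_i(p)^2\sim\pi(X)$ from the Euler product is itself a prime-number-theorem argument (equivalently, apply PNT to $\mathrm{sym}^2f_i$ to get $\sum_{p\le X}\lambda_i(p^2)=o(\pi(X))$ and use $\lambda_i(p)^2=1+\lambda_i(p^2)$); this is classical, so it does not affect correctness. What your approach buys: a self-contained argument (modulo standard $L$-function theory) and a conclusion at prime arguments, which is stronger than what the lemma asks, since primes are square-free. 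What the paper's approach buys: brevity and avoidance of the Rankin--Selberg/PNT machinery, the analytic work being outsourced to \cite{AD}, which in fact yields a positive-density second-moment bound over square-free integers rather than just a limsup. Both arguments ultimately rest on a second-moment lower bound; yours localizes it at primes, the paper's at square-free integers coprime to a fixed modulus.
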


\begin{proof}
The proof is a consequence of one of the results in \cite{AD}, more precisely see pp.28--29, proof of Proposition~5.9 in \cite{AD}. Let $\alpha_f(m) = a_f(m)/{m^{(k-1)/2}}$ be the normalised Fourier coefficients of $f$. Then it is proved in \cite{AD} that for some square-free integer $M$ (composed of primes $ p < 87$),
\[  \sum_{m \, \mrm{square-free}, (m,M)=1, m \leq X}  |\alpha_f(m)|^2 >B_f X, \]
for some constant $B_f>0$ depending only on $f$. Now it is clear from the above lower bound that the sequence $|\alpha_f(m)|^2$ with $m$ square-free must be bounded away from zero along some subsequence. This implies the lemma.
\end{proof}

\begin{rmk}
We do not know how to prove this result by modifying the proofs of several $\Omega$-type results available in the liteature, eg., \cite{ram, rankin} to the square-free setting.
\end{rmk}

\begin{prop} \label{saha-all}
Let $F \in M^2_k$ be non-zero and $k \geq 5$. Then there exist infinitely many $T \in \Lambda_2^+$ such that $a_F(T) \neq 0$ and $-\det(2T)$ is odd and square-free (thus in particular a fundamental discriminant).
\end{prop}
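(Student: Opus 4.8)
The plan is to use the structure decomposition \eqref{mkn} to write $F = a_F(0_2)\,E^k_{2,0} + E_{2,1}(f) + G$ with $f \in S^1_k$ a degree one cusp form and $G \in S^2_k$, and then to split according to whether the image $\Phi F = a_F(0_2)\,E_k + f \in M^1_k$ under Siegel's $\Phi$-operator vanishes. If $\Phi F = 0$ then $F = G$ is a cusp form and the assertion is precisely the theorem of \cite{saha}, which I would invoke directly. All the content therefore lies in the case $\Phi F \neq 0$, which I would attack through the explicit formula of \thmref{genasy}, the goal being to show that its Eisenstein main term dominates the cuspidal error along a carefully chosen family of $T$.

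Concretely, I would only examine matrices $T = \begin{psm} n & 1/2 \\ 1/2 & m \end{psm}$ with $1 \le m \le n$; then $T$ is reduced and $-\det(2T) = 1 - 4mn \equiv 1 \pmod 4$ is automatically odd. Taking $r = 1$ forces $t = 1$ to be the only surviving index in the main sum of \eqref{gene} (the entry $r/2t$ must be half-integral), so for $m$ square-free, where $g_{\Phi F}(m) = a_{\Phi F}(m)$ and $\alpha_m(1;\Phi F) = a_{\Phi F}(m)/g_k(m)$, the formula of \thmref{genasy} collapses to
\[ a_F(T) = c_k^{-1}\, \frac{a_{\Phi F}(m)}{g_k(m)}\, a_2^k(T) + c_{\phi^\circ}(n,1) + a_G(T). \]
Using $a_2^k(T) \asymp \det(T)^{k-3/2}$ for fundamental $T$ (Kitaoka, cf.\ the remark following \thmref{repno}) together with $c_{\phi^\circ}(n,1) \ll \det(T)^{\frac{k}{2}-\frac14+\varepsilon}$ from \corref{cmnr} and $a_G(T) \ll \det(T)^{\frac{k}{2}-\frac{13}{36}+\varepsilon}$ from \cite{Ko}, the main term dominates once $k - \tfrac32 > \tfrac{k}{2} - \tfrac14$, i.e.\ $k > \tfrac52$, which is guaranteed by $k \ge 5$. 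Hence for any fixed square-free $m$ with $a_{\Phi F}(m) \neq 0$ and every sufficiently large $n$ with $1 - 4mn$ square-free we obtain $a_F(T) \neq 0$.

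It then remains to secure two supplies of parameters. First, infinitely many square-free $m$ with $a_{\Phi F}(m) \neq 0$: if $a_F(0_2) \neq 0$, then $a_{\Phi F}(m)$ is a nonzero constant multiple of $\sigma_{k-1}(m)$ plus $a_f(m)$, hence nonzero for all large square-free $m$ since $\sigma_{k-1}(m) \gg m^{k-1}$ dominates $a_f(m) \ll m^{(k-1)/2+\varepsilon}$ by Deligne; if $a_F(0_2) = 0$ but $f \neq 0$, then $a_{\Phi F}(m) = a_f(m)$ and \lemref{omega} supplies infinitely many square-free $m$ with $a_f(m) \neq 0$. Second, for each such fixed $m$, infinitely many $n$ with $1 - 4mn$ square-free: this is the standard sieve for square-free values in the progression $1 \bmod 4m$, and there is no local obstruction — at $p=2$ since $1-4mn$ is odd, at odd $p \mid m$ since $1 - 4mn \equiv 1 \pmod p$ (as $m$ is square-free), and at $p \nmid 4m$ since $1 - 4mn$ runs through all residues mod $p^2$. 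Distinct admissible $n$ yield distinct odd square-free discriminants $1 - 4mn$, hence infinitely many reduced $T$ with $a_F(T) \neq 0$ and $-\det(2T)$ a fundamental discriminant.

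The step I expect to be the real obstacle is the Klingen-only subcase $a_F(0_2) = 0$, $f \neq 0$. Here the size of the dominant term is governed by $a_f(m)$, which oscillates and may be small or vanish, so merely enlarging $\det(T)$ does not force non-vanishing. This is exactly where the $\Omega$-result \lemref{omega} is indispensable: it keeps $|a_f(m)| \gg m^{(k-1)/2}$ along a subsequence of square-free $m$, pinning the main term at size $\asymp_m \det(T)^{k-3/2}$ and hence above the cuspidal error. The purely cuspidal case is hard only in that it genuinely requires Saha's theorem \cite{saha}; the novelty is to reduce every non-cuspidal $F$ to the Eisenstein-dominated regime, where the growth asymmetry between $\det(T)^{k-3/2}$ and $\det(T)^{\frac{k}{2}-\frac14+\varepsilon}$ does the rest.
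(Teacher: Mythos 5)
Your proof is correct, and it follows the paper's own strategy in all essentials: the same decomposition $F = a_F(0_2)E^k_{2,0} + E_{2,1}(f) + G$, the same appeal to \thmref{genasy} with only the $t=1$ term surviving, the same error bounds from \corref{cmnr} and \cite{Ko}, test matrices of the same shape $\begin{psm} n & 1/2 \\ 1/2 & m \end{psm}$ with odd discriminant, and \lemref{omega} together with \cite{saha} for the Klingen and purely cuspidal cases. The one genuine difference is the parameter strategy, and it buys something. The paper sends $m = m_j \to \infty$ along the subsequence produced by \lemref{omega}, so it must use the quantitative bound $|a_f(m_j)| \gg m_j^{(k-1)/2}$ to offset the decay of the main-term coefficient $g_f(m_j)/g_k(m_j)$; the resulting exponent comparison is $3k/4 - 5/4 > k/2 - 1/4$, which is exactly where the hypothesis $k \geq 5$ (i.e.\ $k>4$) enters. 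You instead fix a single square-free $m$ with $a_{\Phi F}(m) \neq 0$ and let only $n \to \infty$: the main-term coefficient is then a fixed nonzero constant, the comparison relaxes to $k - 3/2 > k/2 - 1/4$, i.e.\ $k > 5/2$, and this fixed-$m$ limit is legitimate precisely because the constants in \corref{cmnr} and in Kohnen's bound are uniform in $m$ (the content of \thmref{phi0-bd}). One consequence you actually undersell: contrary to your closing paragraph, your argument uses \lemref{omega} only qualitatively (existence of one square-free $m$ with $a_f(m)\neq 0$), not the $\Omega$-bound itself, so your variant removes the genuine dependence on $k \geq 5$ present in the paper's version (for even $k$ this only adds $k=4$, which is anyway trivial since $M^2_4 = \mathbb{C}\,E^4_{2,0}$, but the structural point stands). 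The remaining differences are cosmetic: you obtain square-free values of $4mn-1$ by a sieve in the progression $-1 \bmod 4m$, where the paper takes $4m_jn_j - 1$ prime via Dirichlet's theorem; and you treat the case $a_F(0_2)\neq 0$ directly by letting $\sigma_{k-1}(m)$ dominate $a_f(m)$, where the paper disposes of it with a terse reduction to the case ``$\Phi F$ cuspidal''.
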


\begin{proof}
We can clearly assume that $T = \left( \begin{smallmatrix} n & r/2 \\ r/2 & m \end{smallmatrix} \right)$ is reduced. 
We write $F= a_F(0_2) E^k_{2,0} + E^k_{2,1}(f) + G$ ($f \in S^1_k, G \in S^2_k$). 
We invoke \thmref{genasy} with $-\det(2T)$ fundamental. Then in the summation in \eqref{gene}, only the term corresponding to $t=1$ survives:
\begin{equation} \label{saha-d}
 a_F(T) =c^{-1}_k \frac{g_{\Phi F}(m)}{g_k(m)} a_2^k (T) + c_{\phi^\circ_m}(n,r) + a_G(T),
\end{equation}
where $\phi^\circ_m$ denotes the cuspidal part of the $m$-th Fourier Jacobi coefficient $  \phi_m$ of $E^k_{2,1}(f) $. Notice from \eqref{saha-d} that it is enough to prove the result for those $F$ for which $\Phi F$ is cuspidal, i.e., we can assume that the Siegel Eisenstein part of $F$ is zero, i.e., $\Phi F=f$.

Moreover, since $T$ is reduced, we can apply the estimate \corref{cmnr} and use \cite{Ko} to conclude that 
\begin{equation} \label{upp}
|c_{\phi^\circ_m}(n,r)| + |a_G(T)| \ll_{F, \epsilon} \det(T)^{k/2 - 1/4 + \epsilon}.
\end{equation}

To prove what we want, first we shall prove that for any sequence $4<m_1<m_2<\cdots$ of integers there exists a sequence $1<D_1<D_2<\cdots$ such that $-D_j$ is a fundamental discriminant for all $j \geq 1$ and matrices $T_j \in \Lambda^+_2$ such that for all $j$, $T_j = \left( \begin{smallmatrix} * & *\\ * & m_j \end{smallmatrix} \right) $ is reduced and $\mrm{disc}(2T_j)= -D_j$. 

We can construct such matrices $T_j$ as follows. Our choice would be $T_1 = \left( \begin{smallmatrix} n_1 & 1/2\\ 1/2 & m_1 \end{smallmatrix} \right) $ with $n_1 \in \mf Z$ chosen such that $1 < m_1 < n_1$ (so that $T_1$ is reduced) and $D_1= \det (2T_1)= 4m_1n_1 -1$ is an odd prime (so that $-D_1$ a fundamental discriminant). By Dirichlet's theorem on primes in arithmetic progressions, the last condition is satisfied by infinitely many $n_1 \geq 1$, and we choose the smallest one greater than $m_1$. It is clear that we can proceed onwards, eg., for $m_2$, we choose by the same procedure such a prime $D_2$ which is also bigger than $D_1$ etc.

We can now complete the proof of the present proposition. From \lemref{omega} let $ \{ m_j \}$ be a sequence along which $|a_f(m_j)|/m_j^{(k-1)/2} > \eta$ for some $\eta>0$ and all $j\geq 1$. In \eqref{saha-d}, we let $T$ vary over the sequence $\{T_j\}$ constructed from the $\{ n_j,m_j \}$ as described above. First of all from \eqref{gkm} we get
\[   |g_f(m_j)| = |a_f(m_j)| > \eta \cdot m_j^{(k-1)/2}. \]

So (using the trivial bound $g_k(m) \ll m^{k-1}$ and that $D_j^{1/2} > m_j$) the first term in \eqref{saha-d} is bounded from the below (upto an absolute constant) by
\begin{align}
\frac{D_j^{k-3/2} } {m_j^{(k-1)/2} } \geq D_j^{3k/4 - 5/4} . \label{loww}
\end{align}
For $k \geq 5$, one has $D_j^{3k/4 - 5/4} \gg _\epsilon D_j^{k/2 - 1/4 + \epsilon}$. Thus the  proposition follows in this case (cf. \eqref{upp}) since $D_j \to \infty$ as $j \to \infty$. When $k=4$, $M^2_4 = \mathbb{C}E^{(2)}_4$ and thus we are fine in this case as well, as was observed at the beginning of the proof.
\end{proof}

\subsection{The Dirichlet series of Kohnen-Skoruppa with modified Petersson product} The expression ${\mathcal M}(F\cdot \overline{G} \cdot \det(Y)^k)$
attached to two Siegel modular forms of degree $2$ and weight $k$
defines function invariant under $\sptwo$ with rapid decay,
we may therefore integrate it against Eisenstein series and may unfold
the integral. For Siegel Eisenstein series this was done in \cite{BC}
for general degree. To do it for Eisenstein series
for other maximal parabolic subgroups one needs to know the relations
between growth killing operators for $\spnr$ on the one hand and
growth killing operators on Jacobi groups on the other. The work of Yamazaki \cite{Yam} could be useful here. We have exhibited the relation
in case of degree $2$ in the previous sections, the general case
needs further investigation.

For $Z\in {\mf H}_2$ and $s\in {\mf C}$ we define
an Eisenstein series 
\[ E_{2,1}(Z,s):=
\sum_{M\in C_{2,1}({\mf Z})
\backslash \sptwo } \left(\frac{\det \im(M \langle Z \rangle )}{ \im(M \langle Z \rangle _1}\right)^s. \]
This series is known \cite{KS} to converge for $\re (s)>2$, 
and furthermore the completed function 
\[ E_{2,1}^*(Z,s):=\pi^{-s}\Gamma(s)\zeta(2s)E_{2,1}(Z,s) \]
has meromorphic 
continuation to ${\mf C}$, the only singularities being first order 
poles in $s=2$ and $s=0$  with residues $1$ and $-1$ respectively. 
Moreover, it satisfies
a functional equation $E_{2,1}^*(Z,2-s)=E_{2,1}^*(Z,s)$. 

We unfold the Rankin-Selberg type integral (following the lines of \cite{KS}):
\[  RS:=\int_{ \sptwo \backslash {\mf H}_2} {\mathcal M}
\left(F(Z)\cdot\overline{G(Z)}\cdot \det(Y)^k\right)\cdot E_{2,1}(Z,s)
\frac{dXdY}{\det(Y)^3}
= \]
\[ \int_{C_{2,1}({\mf Z})\backslash {\mf H}_2} {\mathcal M}
\left(F(Z)\cdot \overline{G(Z)}\cdot\det(Y)^k\right)\cdot
\left(\frac{\det(Y)}{y_1}\right)^s
\frac{dXdY}{\det(Y)^3}. \]
A fundamental domain for $C_{2,1}({\mf Z})$ is given by
\[ \{\begin{psm} 
\tau & z\\
z & \tau'\end{psm} \in \mf H_2 \,\mid (\tau,z)\in 
\Gamma^J\backslash {\mf H_1}\times {\mf C}, v'> y^2 / v , \,
u'\bmod 1\}. \] 

We plug in the Fourier-Jacobi expansions of $F$ and $G$
and we may then integrate over $u' \mod 1$. We interchange
integration and summation over the index $N$.
For fixed $N$ we have then to consider
\begin{equation}\int_{\Gamma^J\backslash {\mf H_1}\times {\mf C}, \, v'>y^2/v}  \, {\mathcal M}
\left(\phi_N\overline{\psi_N}e^{-4\pi Ny_4} \det(Y)^k\right)\cdot
\left(\frac{\det(Y)}{y_1}\right)^s
\frac{dudxdY}{\det(Y)^3}.
\label{integral}\end{equation}
Denoting by $\ldots$ the contributions from the inessential operators, 
we may rewrite the integral, using
the variable $t=v'-\frac{y^2}{v}$ and the identity
\[  {\mathcal M}(\phi_N\cdot\overline{\psi_N}e^{-4\pi Nv'}\det(Y)^k)
= \]
\[ \left(-4\pi N(2k-1)t^{k+1}+(4\pi N)^2 t^{k+2}\right)\cdot 
\mc D^J(\phi_N\cdot\overline{\psi_N}\cdot v^k
e^{-4\pi N\frac{y^2}{v}}))e^{- 4 \pi Nt}+ \ldots \]
Integration over $t$ and then over 
$\Gamma^J\backslash {\mf H_1}\times{\mf C}$ gives two 
$\Gamma$-factors and we get for \eqref{integral}
\[ \left(-(2k-1) \frac{\Gamma(s+k-1)}{(4\pi N)^{s+k-2}}
+\frac{\Gamma(s+k)}{(4\pi N)^{s+k-2}}\right)\{\phi_N,\psi_N\}.
\]
We observe that
\[ \Gamma(s+k) -(2k-1)\Gamma(s+k-1)=(s-k)(k+s-2)\Gamma(s+k-2) \]
and we obtain
\begin{equation}
RS= (s-k)(s+k-2)\frac{\Gamma(s+k-2)}{(4\pi)^{s+k-2}}\sum_N 
\{\phi_N,\psi_N\}N^{-s-k+2}.
\end{equation}

Summarizing our discussion in this section let us now state the following proposition.
\begin{prop}
For Siegel modular forms $F,G$ of degree 2 and weight $k$
the Dirichlet series
\begin{equation} \label{dfg}
D(F,G,s):= \zeta(2s) \sum_{N} \{\phi_N,\psi_N\}N^{-s-k+2} 
\end{equation} 
has a meromorphic continuation to ${\mf C}$.
Moreover,
\[ D(F,G; s)^*:= (2 \pi)^{-2s}(s-k)(s+k-2)\Gamma(s+k-2)\Gamma(s)D(F,G,s) \]
satisfies a functional equation
\[ D(F,G; s)^*=D(F,G ; 2-s)^* \]
and it has simple poles at $s=2$ and $s=0$, the residue at $s=2$
beeing proportional to the modified Petersson product $\{F,G \}$ defined by 
means of $\mathcal M$.
\end{prop}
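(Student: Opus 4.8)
The plan is to recognize the completed series $D(F,G;s)^*$ as a constant multiple of the completed Rankin--Selberg integral, and then read off all of its analytic properties directly from those of $E_{2,1}^*(Z,s)$. The work of unfolding has already been done, so the argument is essentially a reorganization of the identities derived above together with one convergence point that \lemref{phib} is designed to settle.

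First I would record the outcome of the unfolding: writing $RS$ for the integral $\int_{\sptwo\backslash{\mf H}_2}\mc M(F\overline{G}\det(Y)^k)\,E_{2,1}(Z,s)\,\frac{dXdY}{\det(Y)^3}$, we have established
\[ RS=(s-k)(s+k-2)\frac{\Gamma(s+k-2)}{(4\pi)^{s+k-2}}\sum_N\{\phi_N,\psi_N\}N^{-s-k+2}. \]
Substituting $E_{2,1}(Z,s)=\pi^s\,\Gamma(s)^{-1}\zeta(2s)^{-1}E_{2,1}^*(Z,s)$ and setting
\[ I(s):=\int_{\sptwo\backslash{\mf H}_2}\mc M\!\left(F\overline{G}\det(Y)^k\right)E_{2,1}^*(Z,s)\,\frac{dXdY}{\det(Y)^3}, \]
I would argue that $I(s)$ is meromorphic on ${\mf C}$ with exactly the same poles and functional equation as $E_{2,1}^*(Z,s)$. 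This is where \lemref{phib} enters: since $\mc M(F\overline{G}\det(Y)^k)\ll\exp(-c\,\tr Y)$ decays rapidly on the fundamental domain while $E_{2,1}^*(Z,s)$ grows at most polynomially in $Z$, the integral $I(s)$ converges absolutely and locally uniformly in $s$ away from the poles of $E_{2,1}^*$; hence $I(s)$ inherits the meromorphic continuation, the two simple poles at $s=2$ and $s=0$, and (because the integrand is independent of $s$) the functional equation $I(2-s)=I(s)$ straight from $E_{2,1}^*(Z,2-s)=E_{2,1}^*(Z,s)$.

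Next I would substitute back. Since $\zeta(2s)\,RS=\pi^s\,\Gamma(s)^{-1}I(s)$, the unfolding identity gives
\[ D(F,G,s)=\zeta(2s)\sum_N\{\phi_N,\psi_N\}N^{-s-k+2}=\frac{(4\pi)^{s+k-2}\pi^s}{(s-k)(s+k-2)\Gamma(s+k-2)\Gamma(s)}\,I(s), \]
which already exhibits the meromorphic continuation of $D(F,G,s)$. Forming the completed function, the factor $(s-k)(s+k-2)\Gamma(s+k-2)\Gamma(s)$ cancels exactly, and a short bookkeeping of the powers of $2$ and $\pi$ collapses the remaining constants to the clean identity
\[ D(F,G;s)^*=(2\pi)^{-2s}(s-k)(s+k-2)\Gamma(s+k-2)\Gamma(s)\,D(F,G,s)=(4\pi)^{k-2}I(s). \]
From here the functional equation $D(F,G;s)^*=D(F,G;2-s)^*$ is immediate from $I(2-s)=I(s)$, and the simple poles of $D(F,G;s)^*$ at $s=2$ and $s=0$ are precisely those of $I(s)$. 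Finally, since the residue of $E_{2,1}^*(Z,s)$ at $s=2$ is the constant function $1$, the residue of $I(s)$ there is $\int_{\sptwo\backslash{\mf H}_2}\mc M(F\overline{G}\det(Y)^k)\,\frac{dXdY}{\det(Y)^3}$, which up to its normalizing constant is exactly the degree-$2$ extended product $\{F,G\}$ built from $\mc M$; hence $\mrm{Res}_{s=2}D(F,G;s)^*=(4\pi)^{k-2}\{F,G\}$ is proportional to $\{F,G\}$.

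The only genuinely delicate step is the first: justifying that integrating $\mc M(F\overline{G}\det(Y)^k)$ against $E_{2,1}^*(Z,s)$ commutes with the meromorphic continuation of the latter. I expect this to be the main (though not severe) obstacle, and it is precisely what the rapid-decay estimate of \lemref{phib} is meant to supply; once it is in place, every remaining step is a formal manipulation of identities already in hand.
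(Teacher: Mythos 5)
Your proposal is correct and follows essentially the same route as the paper: the paper's proof is precisely the preceding unfolding computation (resting on the rapid decay from \lemref{phib}), with the proposition stated as its summary, and your identity $D(F,G;s)^*=(4\pi)^{k-2}\int_{\sptwo\backslash{\mf H}_2}\mc M(F\overline{G}\det(Y)^k)E_{2,1}^*(Z,s)\,\frac{dXdY}{\det(Y)^3}$ is exactly the bookkeeping the paper leaves implicit when it transfers the continuation, functional equation, and poles of $E_{2,1}^*$ to $D(F,G;s)^*$. Your explicit treatment of the convergence/continuation step and of the residue computation at $s=2$ is a faithful (indeed slightly more detailed) rendering of the paper's argument.
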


\begin{rmk}
It is a common feature, that polynomial factors like
$(s-k)(k+s-2)$ arise from differential operators. Note that this polynomial
is itself invariant under $s\longmapsto (2-s)$.
In the case of Siegel cusp forms we get back the results from \cite{KS}.
\end{rmk}
We give an example of the above theorem in the case of the Siegel Eisenstein series $E^k_{2,0}$. Let us recall that the Fourier Jacobi coefficients of $E^k_{2,0}$ were denoted by $e_{k,m}$ (cf. \eqref{siegel-fj}). Let $D(F,G; s)$ be defined as in \eqref{dfg}.

\begin{prop}
\[ D(E^k_{2,0}, E^k_{2,0} ; s) = Z(E^k_{2,0} , s +k-2)  \cdot \{E_{k,1}, E_{k,1} \},\]
where $Z(F,s)$ denotes the spinor $L$-function of an eigenform $F$ on $\sptwo$.
\end{prop}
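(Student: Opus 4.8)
The plan is to use that the Siegel Eisenstein series $E^k_{2,0}$ lies in the Maass Spezialschar, so that all its Fourier--Jacobi coefficients $e_{k,N}$ are a single Hecke translate of $E_{k,1}$; this collapses the Dirichlet series to a scalar eigenvalue generating series which one then recognizes as the spinor $L$-function. In spirit this is the Kohnen--Skoruppa identity \cite{KS} applied to the Hecke eigenform $E^k_{2,0}$, the only new feature being that the \emph{extended} product $\{\,,\}$ must replace $\langle\,,\rangle$ because the $e_{k,N}$ are not cuspidal.

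\textbf{Step 1 (the Maass relation).} First I would establish the clean identity $e_{k,N}=c_k\,(E_{k,1}\mid V_N)$. This follows by inserting $E_{k,m}=g_k(m)^{-1}\sum_{t^2\mid m}\mu(t)\,E_{k,1}\mid U_t\circ V_{m/t^2}$ from \eqref{ekmek1} into the relation $e_{k,m}=c_k\sum_{d^2\mid m}g_k(m/d^2)\,E_{k,m/d^2}\mid U_d$ recalled in the proof of \propref{e2E}, then using the commutativity of $U_t$ and $V_n$ together with $U_tU_d=U_{td}$ (cf. \cite{EZ} and its use in \S\ref{ekm-bd}) to rewrite each summand as $E_{k,1}\mid U_{td}V_{m/(td)^2}$, and finally collapsing the double sum by M\"obius inversion, $\sum_{t\mid e}\mu(t)=\delta_{e,1}$. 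Only the term $td=1$ survives, giving $e_{k,m}=c_k\,E_{k,1}\mid V_m$; in particular $e_{k,1}=c_kE_{k,1}$.

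\textbf{Step 2 (reduction to a scalar Dirichlet series).} Since $J_{k,1}=\mf C\cdot E_{k,1}$ is one--dimensional, the composite $V_NV_N^{*}$ acts on it as a scalar, say $E_{k,1}\mid V_NV_N^{*}=\lambda_N E_{k,1}$, where $V_N^{*}$ is the adjoint with respect to $\{\,,\}$ recalled in \S\ref{VN}. Combining this with Step~1 and the adjoint relation gives
\[
\{e_{k,N},e_{k,N}\}=c_k^2\,\{E_{k,1}\mid V_N,\,E_{k,1}\mid V_N\}=c_k^2\,\lambda_N\,\{E_{k,1},E_{k,1}\}.
\]
Thus $\{E_{k,1},E_{k,1}\}$ factors out of $D(E^k_{2,0},E^k_{2,0},s)$, and it remains only to prove the scalar identity
\[
\zeta(2s)\,c_k^2\sum_{N\ge 1}\lambda_N\,N^{-s-k+2}=Z(E^k_{2,0},s+k-2).
\]

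\textbf{Step 3 (eigenvalue series $=$ spinor $L$-function) and the obstacle.} Finally I would compute the generating series $\sum_N\lambda_N N^{-w}$, where $w:=s+k-2$, so that $N^{-s-k+2}=N^{-w}$ and $\zeta(2s)=\zeta(2w-2k+4)$. The scalar $\lambda_N$ is precisely the degenerate coefficient $c_{\bphi_4}(0,0)$ of \propref{eigenest} specialized to $q_1=q_2=N$ and $\ell_1=\ell_2=1$, given explicitly by \eqref{phi4}; equivalently the $\lambda_N$ are the Jacobi Hecke eigenvalues computed purely algebraically in \cite{KS} (the computation there does not use cuspidality and so applies verbatim to $E_{k,1}$). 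Assembling the multiplicative coefficients $\lambda_N$ into an Euler product and checking that the factor $\zeta(2s)$ restores the local factor missing from $\sum_N\lambda_N N^{-w}$, one obtains the degree--four spinor Euler product of $E^k_{2,0}$ evaluated at $w=s+k-2$. The main obstacle is exactly this last matching: one must invoke the explicit Satake parameters (Hecke eigenvalues) of the Siegel Eisenstein series $E^k_{2,0}$ and verify prime by prime that the $V_NV_N^{*}$-eigenvalue series, corrected by $\zeta(2s)$, reproduces its spinor factor. Everything preceding this is formal once the Maass relation of Step~1 and the compatibility of $V_N^{*}$ with $\{\,,\}$ (noted in \S\ref{VN}) are in place.
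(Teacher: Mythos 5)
Your Steps 1 and 2 are sound and follow the same skeleton as the paper's proof: the paper simply quotes the Maass-space relation $e_{k,N}=E_{k,1}\vert V_N$ where you derive it from \eqref{ekmek1} and Hayashida's formula (your derivation is fine, and your bookkeeping of the constant $c_k$ is in fact more careful than the paper's, which normalizes it away --- though note this makes the scalar identity you state at the end of Step 2 differ from the proposition by the factor $c_k^2$), and your reduction $\{e_{k,N},e_{k,N}\}=c_k^2\lambda_N\{E_{k,1},E_{k,1}\}$ via the adjoint property of $V_N^*$ with respect to $\{\,,\,\}$ from \S\ref{VN} is exactly the paper's step \eqref{ekm-inpo}.

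The genuine gap is Step 3: the computation that actually constitutes the proposition --- evaluating $\sum_N\lambda_N N^{-w}$ in closed form and identifying it with the spinor $L$-function --- is deferred to an unexecuted ``prime by prime'' matching of Satake parameters, and the two substitutes you offer cannot carry it. \propref{eigenest} gives only the upper bound $c_{\bphi_4}(0,0)\ll(q_1q_2)^{k-5/4}$, and the nested sums in \eqref{phi4} are never evaluated exactly anywhere in the paper; no Euler product is visible from them. What the paper actually does is: (i) insert the Kohnen--Skoruppa decomposition $V_N^*V_N=\sum_{t\mid N}\psi(t)t^{k-2}T^J(N/t)$ (valid for $\{\,,\,\}$ by \S\ref{VN}) and evaluate the Hecke eigenvalues through the Hecke-equivariant Eichler--Zagier isomorphism $J_{k,1}\cong M^1_{2k-2}$, under which $E_{k,1}$ corresponds to the weight $2k-2$ elliptic Eisenstein series, so that $T^J(n)E_{k,1}=\sigma_{2k-3}(n)E_{k,1}$; this yields
\begin{equation*}
\zeta(2s-2k+4)\sum_{N\geq 1}\lambda_N N^{-s}
=\zeta(2s-2k+4)\cdot\frac{\zeta(s-k+1)\,\zeta(s-k+2)}{\zeta(2s-2k+4)}\cdot\zeta(s)\,\zeta(s-2k+3),
\end{equation*}
a product of four zeta factors; and (ii) identify this product with $Z(E^k_{2,0},s)$ not via Satake parameters at all, but via the Zarkovskaya identity $Z(F,s)=Z(\Phi F,s)\,Z(\Phi F,s-k+2)$ together with $\Phi(E^k_{2,0})=E_k$ and $Z(E_k,s)=\zeta(s)\zeta(s-k+1)$. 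Both ingredients are standard and available to you (\cite{KS}, \cite{EZ}), so your outline is completable; but as written, the step in which the theorem is actually proved is missing.
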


\begin{proof}
We start with evaluating the inner product $\{ e_{k,N}, e_{k,N} \}$. Since $E^k_{2,0}$ is in the Maa{\ss} space of degree $2$, we know that $e_{k,N} = E_{k,1} \vert_{k,m} V_N$, where $E_{k,1}$ is the Eisenstein series in $J_{k,1}$.
\begin{equation} \label{ekm-inpo}
\{ e_{k,N}, e_{k,N} \} = \{  V_N^* V_N E_{k,1} , E_{k,1} \}
\end{equation}
From \cite{KS} we know that on the vector space $J^{cusp}_{k,1}$, one has the following expression for the map $V_N^* V_N$:
\[  V_N^* V_N= \sum_{t \mid N} \psi(t) t^{k-2} T^J(N/t),\]
where $V_N^*$ is the adjoint of $V_N$ on this space w.r.t. $\langle  ,  \rangle$, $\psi(t)$ is defined by the relation $\sum_{t \geq 1} \psi(t) t^{-s} = \zeta(s-1) \zeta(s) /\zeta(2s)$, and $T^J(n)$ denotes the $n$-th Hecke operator on $J_{k,1}$ (see \cite{EZ} for the definition). By section~\ref{VN}, the same relation holds on the space $J_{k,1}$ w.r.t. $\{ \, ,\}$ as well.

Therefore by the correspondence $J_{k,1} \longleftrightarrow  M^1_{2k-2}$ in \cite[Cor~3, Thm.~5.4]{EZ} which is compatible with Hecke operators, we can write
\begin{equation}
\{ e_{k,N}, e_{k,N} \} =  \sum_{t \mid N} \psi(t) t^{k-2} \sigma_{2k-3}(N/t) \cdot \{  E_{k,1} , E_{k,1} \}.
\end{equation}

Thus
\[ D(E^k_{2,0}, E^k_{2,0} ; s -k+2) = \]
\begin{align}
&= \zeta(2s -2k+4) \sum_{N=1}^\infty  \left( \sum_{t \mid N} \psi(t) t^{k-2} \sigma_{2k-3}(N/t) \right) N^{-s} \cdot \{  E_{k,1} , E_{k,1} \} \nonumber \\
&= \zeta(2s -2k+4) \sum_{t=1}^\infty \psi(t) t^{-s + k-2} \sum_{M=1}^\infty ( \sum_{d \mid M} d^{2k-3} ) M^{-s} \cdot \{  E_{k,1} , E_{k,1} \}  \nonumber \\
&=\zeta(2s -2k+4) \cdot  \frac{\zeta(s-k+1) \zeta(s-k+2) }{ \zeta(2s-2k+4)} \cdot \zeta(s) \zeta(s-2k+3) \cdot \{  E_{k,1} , E_{k,1} \}  \nonumber \\
&= \zeta(s) \zeta(s-k+1) \zeta(s-k+2) \zeta(s-2k+3) \cdot \{  E_{k,1} , E_{k,1} \} \label{Z} \\
&= Z(E^k_{2,0} , s)  \cdot \{E_{k,1}, E_{k,1} \}. \label{Zo}
\end{align}
The last equality is well-known, for instance it follows from the Zarkovskaya identity
\[  Z(F,s) = Z( \Phi(F) , s) Z(\Phi(F), s-k+2),\]
with $\Phi$ being the Siegel's $\Phi$-operator, see \cite{Fr}.
\end{proof}

\begin{rmk}
The result is in line with one of the results in \cite{KS} that if $F$ is an eigenform and $G$ is in the Maa{\ss} space, then $D(F,G;s)$ is proportional to $Z(F,s)$. It seems to be a rather difficult question to determine whether the quantity $\{E_{k,1}, E_{k,1} \}$ is non-zero.
\end{rmk}

{\small 
\begin{center}
{\bf \Large Appendix\\[0.2cm]
Some calculus of differential operators}
\end{center}

The purpose of this appendix is to prove the crucial 
identities (\ref{DJ0}), (\ref{DJ1}),({\ref{DJ2})
by proving such identies first for $\Phi_1$ and $\Phi_2$
and then summing up. In our approach, the simple form of
(\ref{DJ0}), (\ref{DJ1}),({\ref{DJ2}) drops off as a result
of lengthy computations. A more conceptual explanation is desirable.

The non-commutative ring of invariant differential operators for the 
(non-reductive)
Jacobi group $G^J({\mf R})$ is very complicated. Fortunately
Ochiai et al \cite{Ochiai} provided a set of generators for that ring,
exhibited below (with a slightly modified notation)
\begin{eqnarray*}
L_1&:=&-(z_1-\bar{z_1})^2
%\frac{\partial^2}
{\partial_1\bar{\partial_1}
}- 
(z_2-\bar{z_2})^2\di\dib \\
&&
-(z_1-\bar{z_1})(z_2-\bar{z_2})(\bar{\partial_1}\di+\partial_1\dib)
\\
L_2&:=& (z_1-\bar{z_1})\di\dib\\
%\frac{\partial^2}{\delta_2\bar{\delta_2}}\\
L_3
&=&i(z_1-\bar{z_1})^2\left\{-\partial_1\dib\dib
+\bar{\partial_1}\di\di\right\}\\
&&+i(z_2-\bar{z_2})(z_1-\bar{z_1})\left(\di\di\dib-\di\dib\dib\right)\\
&&+2i\cdot  L_2\\
L_3'&:=&(z_1-\bar{z_1})^2\left\{-\partial_1\dib\dib
+\bar{\partial_1}\di\di\right\}\\
&&+(z_2-\bar{z_2})(z_1-\bar{z_1})\left(\di\di\dib-\di\dib\dib\right)\\
L_4&=&
\frac{1}{2}(z_1-\bar{z_1})^2\left(\partial_1+\bar{\partial_1}\right)\left(\di\di+\dib\dib\right)\\
&&-\frac{1}{2}(z_1-\bar{z_1})^2\left(\partial_1-\bar{\partial_1}\right)\left(\di\di-\dib\dib\right)\\
&&+(z_1-\bar{z_1})(z_2-\bar{z_2})\left(
\di+\dib\right)\di\dib
\end{eqnarray*} 

 \section{The calculus for $\Phi_1$ }
The aim here is to rewrite the Maass's operator 
$\Phi_1= \tr\left( (Z-\bar{Z})\cdot \{(Z-\bar{Z})\partial \bar{Z}\}^t \cdot 
\partial Z\right)$
as a differential operator for Jacobi forms.
More explicitly, this is
\[ (z_1-\bar{z_1})^2\frac{\partial^2}{\partial z_1\partial \bar{z_1}}
+(z_2-\bar{z_2})^2\frac{\partial^2}{\partial z_1\partial \bar{z_4}}
+(z_2-\bar{z_2})^2\frac{\partial^2}{\partial z_4\partial \bar{z_1}}
+(z_4-\bar{z_4})^2\frac{\partial^2}{\partial z_4\partial \bar{z_4}} \]
\[+(z_1-\bar{z_1})(z_2-\bar{z_2})\frac{\partial^2}{\partial z_1\partial 
\bar{z_2}}
+(z_1-\bar{z_1})(z_2-\bar{z_2})\frac{\partial^2}{\partial z_2\partial 
\bar{z_1}}\]
\[+(z_2-\bar{z_2})(z_4-\bar{z_4})\frac{\partial^2}{\partial z_2\partial 
\bar{z_4}}
+(z_2-\bar{z_2})(z_4-\bar{z_4})\frac{\partial^2}{\partial z_4\partial 
\bar{z_2}} \]
\[+\frac{1}{2}(z_1-\bar{z_1})(z_4-\bar{z_4})\frac{\partial^2}{\partial z_2
\partial \bar{z_2}}
+\frac{1}{2}(z_2-\bar{z_2})^2\frac{\partial^2}{\partial z_2
\partial \bar{z_2}}.\]

We start from a function $f$ on ${\mf H_1}\times {\mf C}$ 
and associate to it in the same way as in (\ref{h}), (\ref{H})
two functions $H$ and $h$ on ${\mf H}_2$ and 
${\mf H_1}\times {\mf C}$, related to each other by
$H(Z)= h(\tau,z)\cdot t^ke^{-4\pi Nt}$ with $t:= v-\frac{y^2}{v}$. 

We apply the differential operator $\Phi_1$ to $H$ and we have to express
$$\Phi_1(H)=\Phi_1\left(h\cdot (v'-\frac{y^2}{v})^k\cdot 
e^{-4\pi N(v'-\frac{y^2}{v}}\right)$$ 
by differential operators applied to $h$.
We define  
$t:= y_4-\frac{y_2^2}{y_1}$ and $R:= -4\pi N$.

We first consider the case $k=0$, which seems to be somewhat simpler.
Already here we get a sum of 29 terms, which (by a tedious elementary
calculation) can be recollected into Jacobi
differential operators, using the notation from above:
\[ \Phi_1(H)=
\left( -L_1+ i\cdot L_2 t +R t - R^2 
t^2\right)(h)\cdot e^{Rt}. \]

A formal k-fold differentiation w.r.t. R gives the expression for
$e^{-Rt}\cdot \Phi_1(H)$ in the case of 
arbitrary $k$ (to be then applied to the function $h$:
\[ -L_1 t^k+i\cdot L_2t^{k+1} +  kt^k+Rt^{k+1}
-k(k-1)t^k-2Rkt^{k+1}-R^2t^{k+2}
= \]
\[ -\left(L_1+k(k-2)\right)t^k+
\left(i\cdot L_2+ R-2Rk\right)t^{k+1}
- R^2t^{k+2} .\]

\section{The calculus for $\Phi_2$ }

As before, we consider functions $H$ and $h$; 
the second differential operator of Maass is defined by
\[ \Phi_2(H)= \det(Z-\bar{Z})^{\frac{5}{2}}\partial^{[2]}\overline{\partial}^{[2]}
\left(H\cdot \det(Z-\bar{Z})^{-\frac{1}{2}}\right)= \]

\[ 16\cdot\det(Y)^{\frac{5}{2}} \partial^{[2]}\bar{\partial}^{[2]}\left(
h
\cdot y_1^{-\frac{1}{2}}\cdot (y_4-\frac{y_2^2}{y_1})^{k-\frac{1}{2}}
\cdot e^{R(y_4-\frac{y_2^2}{y_1})}\right) \]

In a first step we determine (for $k=\frac{1}{2}$)
\[ 16\cdot\det(Y)^{\frac{5}{2}}\partial^{[2]}
\bar{\partial}^{[2]}\left(h\cdot y_1^{-\frac{1}{2}}
\cdot e^{R(y_4-\frac{y_2^2}{y_1})}\right) \]

We keep in mind that (writing $\partial_j$ for $\frac{\partial}{\partial z_j}$)
\begin{equation}
\partial^{[2]}\bar{\partial^{[2]}}=\partial_1\bar{\partial_1}\partial_4\bar{\partial_4}
-\partial_1\partial_4
\bar{\partial_2}^2-\bar{\partial_1}\bar{\partial_4}\partial_2^2+\partial_2^2
\bar{\partial_2^2}\label{Maass}\end{equation}
  
Again we get gives a lot of terms from $\Phi_2$, to be collected as follows
$$(A+B\cdot L_1+C\cdot L_2+D\cdot L_3)(h)\cdot t^ke^{Rt}$$
where
$A,B,C,D$ are polynomials of degree smaller or equal to 2 in $t$

To avoid  formulas, which are not necessary for us, 
we only consider the explicit form of
$A$ and $B$ here; $\frac{5}{2}$ should be viewed as
$2+k$ for $k=\frac{1}{2}$:
\[ A=0 \]
\[ B={R^2} L_1 t^{\frac{5}{2}} \]
%%$$A=M^2\cdot 0 $$
To get weights $k+\frac{1}{2}$, we differentiate as before w.r.t. $R$
to get
\[ \left(R^2L_1 t^{\frac{5}{2}+k}+2Rkt^{\frac{5}{2}+k-1}+ k(k-1)t^{\frac{5}{2}
+k-2}+ \ldots \right)(h)e^{Rt} \] 
where $\ldots$ denotes sums of 
monomials involving $L_2$ and $L_3$. 

This is then true  not only for $k\in \frac{1}{2}+{\mf N}$, but also
 for arbitrary $s\in {\mf C}$, in particular
for $k'\in {\mf N}$ with $k':= k+\frac{1}{2}$ and we get

\[ \left( ( R^2 t^{2+k'}+2R(k'-\frac{1}{2})t^{1+k'}+ 
(k'-\frac{1}{2})
(k'-\frac{3}{2})t^{k'} )\cdot L_1
+ \ldots \right)(h)e^{Rt}. \]

}

\end{document}